\documentclass[12pt]{amsart}
\usepackage[T1]{fontenc}
\usepackage{lmodern,microtype,amsmath,amssymb,amsthm,mathtools}
\usepackage[margin=1in]{geometry}
\usepackage{needspace}
\usepackage{quiver}
\usepackage{mathrsfs}
\usepackage[colorlinks=true,linkcolor=blue,citecolor=blue,urlcolor=blue]{hyperref}
\newtheorem{theorem}{Theorem}[section]
\newtheorem{lemma}[theorem]{Lemma}
\newtheorem{proposition}[theorem]{Proposition}

\theoremstyle{definition}
\newtheorem{definition}[theorem]{Definition}
\newtheorem{remark}[theorem]{Remark}
\newtheorem{quest}[theorem]{Question}
\newtheorem{Conj}{Conjecture}
\newcommand{\R}{\mathbb R}
\newcommand{\Z}{\mathbb Z}\newcommand{\C}{\mathbb C}
\newcommand{\T}{\mathbb T}\newcommand{\N}{\mathbb N}
\newcommand{\XX}{\mathrm{X}}

\newcommand{\GenNz}{\mathsf{Nil}^{\mathrm{gen},0}}
\newcommand{\I}{\mathcal{I}}
\newcommand{\NN}{\mathsf{Nil}}
\newcommand{\MCS}{\mathrm{MCS}}
\newcommand{\GenN}{\mathsf{Nil}^{\mathrm{gen}}}
\newcommand{\GenI}{\mathcal{I}^{\mathrm{gen}}}
\newcommand{\Null}{\mathsf{Null}}

\newcommand{\lalg}{\mathfrak l}
\newcommand{\YY}{\mathrm{Y}}
\newcommand{\ZZ}{\mathrm{Z}}
\title{A structure theorem for multiple correlation sequences on nilsystems}
\author[O. Shalom]{Or Shalom}
\address{Department of Mathematics\\ Bar Ilan University \\ 
Ramat Gan \\
5290002, Israel}
\email{Or.Shalom@math.biu.ac.il}
\begin{document}
\begin{abstract}
In a recent paper \cite{FKshalom} the author refuted several conjectures about the vanishing of multiple correlation sequences. The goal of this paper is to provide a counterpart for the conjectures of Frantzikinakis--Kuca and of Leibman. The significance of this manuscript is twofold: firstly, we provide a structure theorem for multiple correlation sequences on nilsystems along arithmetic progressions (in the sense of Frantzikinakis--Kuca), and a corresponding structure theorem for polynomial orbits of subnilmanifolds in connected nilmanifolds (in the sense of Leibman). Secondly, we identify \emph{null generalized nilsequences}, as a notion of \emph{smallness}, which provides a structured counterpart for null sequences. In particular, in the special case of ergodic nilsystems, we answer positively a question of Frantzikinakis on the structure of multiple correlation sequences.
\end{abstract}
\maketitle
\section{Introduction}
In \cite{furstenberg1977ergodic}, Furstenberg gave an ergodic-theoretic proof for Szemer\'edi's theorem about the existence of arbitrarily long arithmetic progressions in sets of positive upper Banach density. The ergodic-theoretic version of Szemer\'edi's theorem is of the following form:
\begin{theorem}[Furstenberg-Szemer\'edi's theorem] Let $\XX=(X,\mathcal{B},\mu,T)$ be a standard Borel measure-preserving system and let $A\in \mathcal{B}$ a set of positive measure. For every $\ell\in \mathbb{N}$ let
\begin{equation}\label{multiplerecurrence}
a_\ell(n) \coloneqq \mu (A\cap T^{-n}A\cap\dots \cap T^{-n\ell}A) = \int_X 1_A(x)\cdot 1_A(T^nx)\cdots 1_A(T^{n\ell}x)\,d\mu(x).
\end{equation}
Then the Ces\'aro averages of $a_\ell$ have a positive lower limit
$$\liminf_{N\rightarrow\infty}\frac{1}{N}\sum_{n=1}^N a_\ell(n)>0.$$
\end{theorem}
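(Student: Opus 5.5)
I would follow Furstenberg's original strategy: a transfinite structure theorem combined with relative multiple recurrence along the resulting tower of factors. First, reduce to the ergodic case using the ergodic decomposition and Fatou's lemma. If $\mu=\int \mu_x\,d\mu(x)$ is the ergodic decomposition, then $\mu_x(A)>0$ on a set of positive measure. The averages of $a_\ell$ are averages of the corresponding fiberwise quantities, so a positive lower bound for $\liminf$ for each ergodic component with $\mu_x(A)>0$ integrates to a positive lower bound for $\mu$. Call a system (or an extension) \emph{SZ} if, for every $A$ of positive measure and every $\ell$, the Ces\`aro averages of $a_\ell$ have positive $\liminf$. It is convenient to prove the slightly stronger relative statement, in which $1_A$ is replaced by a nonnegative function $f$ with $\int f>0$.

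Second, I would establish the Furstenberg structure theorem. For an ergodic system $\XX$ there is an ordinal $\eta$ and a tower of factors $\XX_0\leftarrow\XX_1\leftarrow\cdots\leftarrow\XX_\eta=\XX$ such that:
\begin{itemize}
\item $\XX_0$ is the trivial system;
\item each successor $\XX_{\alpha+1}\to\XX_\alpha$ is a compact (isometric) extension;
\item at limit ordinals, $\XX_\lambda$ is the inverse limit of the preceding $\XX_\alpha$;
\item $\XX\to\XX_\eta$ is relatively weak mixing, which is the trivial extension here.
\end{itemize}
This comes from the dichotomy that a non relatively weak mixing extension $\YY\to\ZZ$ admits an intermediate nontrivial compact extension. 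To prove the dichotomy, take a nontrivial invariant function in $L^2(\YY\times_\ZZ\YY)$ and use it as a relative Hilbert--Schmidt kernel, whose eigenfunctions generate finite-rank modules over $L^\infty(\ZZ)$.

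Third, I would show that SZ is preserved along the tower by transfinite induction.
\begin{itemize}
\item The trivial system is SZ, since $a_\ell(n)=\mu(A)^{\ell+1}$.
\item SZ passes to inverse limits. Approximate $A$ by a set $A'$ measurable with respect to some $\XX_\alpha$, chosen so that $\mu(A\triangle A')$ is small compared to the recurrence constant. The conditional expectation trick then shows that $E(1_A\mid\XX_\alpha)>1-\varepsilon$ on a large part of $A'$.
\item SZ passes through relatively weak mixing extensions. Using van der Corput and induction on $\ell$, one shows that
\[
\int\prod_{i=0}^{\ell} T^{in}f_i\,d\mu-\int\prod_{i=0}^{\ell}T^{in}E(f_i\mid \ZZ)\,d\mu\to0
\]
in density, and the base already controls the second term.
\item SZ passes through compact extensions. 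This is the main obstacle.
\end{itemize}

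For the compact-extension step I would follow Furstenberg's argument. Using the compactness of the extension, find a subset $B\subseteq A$ of positive measure and a finite-dimensional module representation. On each fiber over $\ZZ$, the orbit $\{T^n1_B\}$ is then $\varepsilon$-approximated by finitely many functions $g_1,\dots,g_M$, uniformly over a set $C\subseteq\ZZ$ of positive measure. Then color $n$ by the tuple of indices of the approximants of $T^{in}1_B$ for $i\le\ell$. Apply van der Waerden's theorem together with SZ for the base $\ZZ$, applied to the set $C$ with a longer progression length $L$ determined by van der Waerden. This produces, for a positive-density set of $n$ and a positive-measure set of base points, a monochromatic progression $n',n'+n,\dots,n'+\ell n$ along which all the fibers of $T^{in}B$ nearly coincide. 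From this one gets a positive-measure intersection uniformly. Keeping track of uniform constants, so that the $\liminf$ rather than a single $n$ is positive, is the delicate bookkeeping step.
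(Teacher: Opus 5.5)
The paper gives no proof of this statement; it is quoted from Furstenberg's 1977 paper. Your plan (ergodic decomposition with Fatou, passing from sets to nonnegative functions, the structure theorem, and transfinite induction through compact extensions, inverse limits and a relatively weakly mixing top extension) is precisely Furstenberg's argument, and those reduction steps are fine. The genuine problem is the compact-extension step. You rightly call it the heart of the matter, but as described it does not work. If you color $n$ by the tuple of approximant indices of $T^{in}1_B$, $i\le\ell$, a monochromatic progression $n'+kd$ only tells you that, for each fixed $i$, the functions $T^{i(n'+kd)}1_B$ are close as $k$ varies. It does not make $T^{n'}1_B,T^{n'+n}1_B,\dots,T^{n'+\ell n}1_B$ close on a fiber. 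Moreover, van der Waerden's theorem never produces a progression whose step is a prescribed return time $n$ of the base. So a monochromatic progression $n',n'+n,\dots,n'+\ell n$ for a positive-density set of $n$ is not something the argument can deliver.

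What works is the following. Let $C=\{z:\mu_z(B)>a_0\}$, which has positive measure, and let $1_B$ be AP with approximants $g_1,\dots,g_M$. Put $L=W(\ell+1,M)$. SZ for $\ZZ$ with length $L$ gives a set of $n$ of positive lower density $d$ on which $\nu(C\cap T^{-n}C\cap\dots\cap T^{-Ln}C)\geq c>0$. For such $n$ and $z$ in this intersection, color $m\in\{1,\dots,L\}$ by a \emph{single} index $j$ with $\|T^{mn}1_B-g_j\|_{L^2(\mu_z)}<\delta$. Van der Waerden gives $a,r$ with $\{a+ir\}_{i=0}^{\ell}$ monochromatic, so the fibers over $z$ of $T^{-(a+ir)n}B$ nearly coincide. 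At the same time $\mu_z(T^{-an}B)=\mu_{T^{an}z}(B)>a_0$, since $T^{an}z\in C$. Pigeonhole over the at most $L^2$ pairs $(a,r)$, integrate over $z$, and translate by $T^{an}$. This gives some $r=r(n)\in\{1,\dots,L\}$ with
\[
a_\ell\bigl(r(n)\,n\bigr)\ \geq\ \frac{a_0\,c}{2L^2},
\]
that is, recurrence along the step $r(n)n$, with $r(n)$ uncontrolled. The ``bookkeeping'' is the observation that $n\mapsto r(n)n$ is at most $L$-to-one and $r(n)n\le Ln$. Hence the good steps still have lower density at least $d/L^2$, which yields $\liminf_N\frac1N\sum_{n\le N}a_\ell(n)\geq a_0cd/(2L^4)>0$.

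Two smaller points. First, the top of the compact tower is the maximal distal factor, which need not equal $\XX$, so you should not write $\XX_\eta=\XX$; you do correctly keep the relatively weak mixing step in the induction. Second, in the inverse-limit step the comparison must be multiplicative: $a_\ell^{A}(n)\geq(1-(\ell+1)\varepsilon)\,a_\ell^{A''}(n)$ with $A''=\{E(1_A\mid\XX_\alpha)>1-\varepsilon\}$. An additive error ``small compared to the recurrence constant'' is circular, because that constant depends on the approximating set.
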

It is now classical that this lower limit is in fact a limit, see \cite{host2005nonconventional,ziegler2007universal} and moreover formulae for the said limit also exist, see \cite{BHK,zieglerformula}. The work of Furstenberg led to numerous generalizations of Szemer\'edi's theorem. For instance, several cases for polynomial progressions were established \cite{bergelson1996polynomial,Leibman3,Frk052,FrK05}; for Hardy sequences e.g., \cite{Fran-Hardy}; for IPs \cite{furstenberg1985ergodic,BergelsonIPSzemeredi,nilpotentIPszemeredi}; for more general group actions \cite{furstenberg1978ergodic,tao2008norm,austin2016nonconventional,robertson2016characteristic,ShalomGrothendieck}; for improvements on the lower bounds see \cite{Franpoly,abb,shalom2,abs,krashalom2025}; and for combinatorial lines see \cite{DHJ3,DHJ}.

The sequence $a_\ell$ in \eqref{multiplerecurrence} is an example of a larger class of sequences called $\ell$-fold \emph{multiple correlation sequences} (here along linear iterates). 
\begin{definition}[Multiple correlation sequences]
    Let $\ell\geq 1$ and let $\XX=(X,\mathcal{B},\mu,T)$ be an ergodic measure-preserving system. An \emph{$\ell$-fold multiple correlation sequence on $\XX$} is any sequence of the form 
    \begin{equation}\label{MCScommutative}
    a(n) = \int_X f_0(x)\cdot \prod_{i=1}^{\ell} f_i(T^{in}x)\,d\mu(x)
\end{equation}
where $f_0,\dots,f_{\ell}\in L^\infty(\mu)$ are bounded functions. We denote by $\MCS_{\ell}(\XX)$ the class of all multiple correlation sequences on $\XX$.
\end{definition}

Bochner--Herglotz theorem, together with a simple polarization trick, fully classifies single (bilinear) correlation sequences. Indeed, it implies that any $a\in \MCS_1(\XX)$ admits a complex-valued measure $\mu_a$ on the circle, with bounded variation, so that 
\begin{equation}\label{BochnerHerglotz}a(n) = \int_{S^1} t^n~\,d\mu_a(t).
\end{equation}
Moreover, the several properties of the measure $\mu_a$ directly correspond to the system. For instance, $\mu_a$ is atomic at $\lambda$ only if $\lambda$ is an eigenvalue.

The goal of this paper is to extend this characterization for multiple correlation sequences, which we establish in the special case where $\XX$ is a \emph{nilsystem}. In order to formulate our main results formally, we first define some notations and survey related results.

\subsection{Families of sequences}
Let $k\geq 1$. A \emph{$k$-step nilsystem} is a measure-preserving system of the form $\XX=(G/\Gamma,\mathcal{B},\mu_{G/\Gamma},R_\alpha)$, where $G$ is a $k$-step nilpotent Lie group, $\Gamma$ is a discrete co-compact subgroup, $\mathcal{B}$ is the Borel $\sigma$-algebra, $\mu_{G/\Gamma}$ is a normalized $G$-invariant measure and finally $R_\alpha g\Gamma = \alpha g\Gamma$ is a translation by some $\alpha \in G$. It is well known that a nilsystem is ergodic if and only if all orbits are dense and in that case it is uniquely ergodic. Furthermore, ergodic nilsystems admit a presentation for $G$ and $\Gamma$ such that $G=\langle G^0,\alpha\rangle$ where $G^0$ is the connected component of the identity in $G$. Unless mentioned otherwise, we shall assume throughout that all ergodic nilsystems are of this form.

Most of the following definitions are from \cite{Fran}. 
\begin{definition}[Generalized and integral combinations of nilsequences, and null sequences] \label{families:def} Let $k\geq 1$. We have the following notions of \emph{structured sequences}:
   \begin{itemize}
       \item[(1)] A \emph{basic $k$-step nilsequence} is a sequence of the form $n\mapsto F(\alpha^n x_0)$ where $F:G/\Gamma\rightarrow \C$ is a continuous function on a nilsystem $(G/\Gamma,R_\alpha)$ and $x_0\in G/\Gamma$ is a point. We write $\NN_k\subseteq \ell^\infty(\mathbb{Z})$ for the closed linear subspace of $\ell^\infty(\mathbb{Z})$ generated by the basic $k$-step nilsequences. Given a basic nilsequence $\psi(n)=F(\alpha^n x_0)$ as before, we refer to $F$ as a \emph{symbol}\footnote{The symbol is not unique.} of $\psi$.
       \item[(2)] A basic $k$-step \emph{generalized} nilsequence is a sequence of the form $n\mapsto F(\alpha^n x_0),$ where $F:G/\Gamma\rightarrow\C$ is a Riemann-integrable\footnote{For the purpose of this paper, we say that $F$ is Riemann integrable if it is bounded and the set of discontinuities of $F$ is Haar-null.} function on an ergodic\footnote{ergodicity was not originally assumed in the literature \cite{Fran}, but it is necessary as otherwise any bounded sequence is a $1$-step generalized nilsequence.} nilsystem $(G/\Gamma,R_\alpha)$. We write $\GenN_k\subseteq \ell^\infty(\Z)$ for the closed linear subspace of $\ell^\infty(\Z)$ generated by the basic $k$-step generalized nilsequences. 
       \item[(3)] An \emph{integral combination} of $k$-step nilsequences is a sequence of the form $n\mapsto \int_S \psi_s(n)\,d\nu(s) $ where $S$ is a compact metric space, $\nu$ is a complex-valued Borel complex measure with bounded variation, $\psi_s\in \NN_k$ for all $s\in S$ and $s\mapsto \psi_s(n)$ is uniformly bounded in $n$ and $s$ and measurable for all $n\in \mathbb{Z}$. We write $\I_k\subseteq \ell^\infty(\Z)$ for the closed linear subspace of $\ell^\infty(\Z)$ generated by all the integral combinations of $k$-step nilsequences. Similarly, we define $\GenI_k$ with respect to generalized nilsequences.
    \end{itemize}
    We also have the following notions of \emph{small} sequences.
    \begin{itemize}
       \item[(4)] We say that a bounded sequence $a\in \ell^\infty(\Z)$ is $\ell^p$-null if $$\lim_{N\rightarrow\infty}\frac{1}{N}\sup_{M\in \Z} \sum_{n=M}^{N+M}|a(n)|^p=0$$ with the convention that $a$ is $\ell^\infty$-null if $\lim_{|n|\rightarrow\infty}a(n)=0.$ We denote by $\Null_p$ the set of all $\ell^p$-null sequences. Thus $\Null_\infty=c_0$ is the space of sequences tending to zero, and we use the two notations interchangeably.
       \item[(5)] We denote by $\GenNz_k$ the set of all basic generalized $k$-step nilsequences which admit a symbol $F$ that vanishes outside of a null set.
   \end{itemize} 
\end{definition}
It is not hard to prove that every sequence in $\GenNz_k$ belongs to $\Null_p$ for every $1\leq p<\infty$. Indeed, let $\psi(n)=F(\alpha^nx_0)$ with $F$ as in (5) and let $\varepsilon>0$. Since $|F|^p$ is Riemann-integrable and $\int_{G/\Gamma}|F|^p\,d\mu_{G/\Gamma}=0$, there is a continuous $g\geq|F|^p$ with $\int_{G/\Gamma} g\,d\mu_{G/\Gamma}<\varepsilon$. Since ergodic nilsystems are uniquely ergodic, $\frac{1}{N}\sum_{j=0}^{N}g(\alpha^jx)\rightarrow\int g\,d\mu_{G/\Gamma}$ uniformly in $x\in G/\Gamma$. Applying this with $x=\alpha^Mx_0$ gives $\limsup_{N\rightarrow\infty}\frac{1}{N}\sup_{M\in\Z}\sum_{n=M}^{M+N}|\psi(n)|^p\leq\varepsilon$.\\

The following lemma summarizes some simple inclusions that will be used throughout.
\begin{lemma}\label{inclusions}
    Let $k\geq 1$. The following inclusions hold:
    \begin{itemize}
        \item[(i)] $\NN_k\subseteq \NN_{k+1}$, $\GenN_k\subseteq \GenN_{k+1}$, $\I_k\subseteq \I_{k+1}$ and $\GenI_k\subseteq \GenI_{k+1}.$
        \item[(ii)] $\NN_k\subseteq \GenN_k\cap \I_k$ and both $\GenN_k$ and $\I_k$ are subsets of $\GenI_k.$
        \item[(iii)] $\Null_\infty \subseteq \Null_p\cap \I_1$ for all $p\geq 1$.
        \item[(iv)] If $\XX$ is a $k$-step nilsystem, then $\MCS_{\ell}(\XX)\subseteq \I_k$ for all  $\ell\geq 1$.
    \end{itemize}
\end{lemma}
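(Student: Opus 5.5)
Items (i)–(iii) follow directly from Definition~\ref{families:def}, and (iv) is a known theorem about nilsystems; I would handle them in order.

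For (i), a $k$-step nilpotent Lie group is also $(k+1)$-step nilpotent, since its lower central series reaches the identity at stage $k+1$ and hence also at stage $k+2$. So every basic $k$-step nilsequence, generalized or not, is a basic $(k+1)$-step one. Taking closed linear spans gives $\NN_k\subseteq\NN_{k+1}$ and $\GenN_k\subseteq\GenN_{k+1}$. An integral combination of elements of $\NN_k$ is therefore an integral combination of elements of $\NN_{k+1}$, which gives the $\I$ and $\GenI$ inclusions.

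For (ii), the step $\NN_k\subseteq\GenN_k$ needs care, because generalized nilsequences are required to live on \emph{ergodic} nilsystems while basic nilsequences are not. I would use the standard fact that the orbit closure $\overline{\{\alpha^n x_0\}}$ of any point in a nilsystem is a sub-nilmanifold $H/\Delta$ on which $R_\alpha$ acts ergodically (Leibman's orbit-closure theorem; if needed, first pass to a presentation with $G=\langle G^0,\alpha\rangle$). The restriction of a continuous $F$ to $H/\Delta$ is continuous, hence Riemann integrable, and it produces the same sequence. So every basic nilsequence is a basic generalized nilsequence, and taking closed spans gives $\NN_k\subseteq\GenN_k$. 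For $\NN_k\subseteq\I_k$, take $S$ to be a point and $\nu$ a Dirac mass. The same Dirac trick gives $\GenN_k\subseteq\GenI_k$. Finally, $\I_k\subseteq\GenI_k$ follows from $\NN_k\subseteq\GenN_k$, since every integral combination of elements of $\NN_k$ is then an integral combination of elements of $\GenN_k$, and closures are preserved.

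For (iii), if $a\in c_0$ then the averages $\frac1N\sum_{n=M}^{M+N}|a(n)|^p$ tend to $0$ uniformly in $M$: split the window into the indices with $|n|\le R$, which contribute at most $(2R+1)\|a\|_\infty^p/N$, and the rest, where $|a(n)|<\varepsilon$. To show $c_0\subseteq\I_1$, write $a(n)=\int_{\Z}\delta_s(n)\,d\nu(s)$ with $\nu=\sum_s a(s)\delta_s$. The trouble is that $\nu$ need not have bounded variation, and the $\delta_s$ are not obviously nilsequences. Instead I would use the closedness of $\I_1$: it suffices to show that finitely supported sequences lie in $\I_1$, because these are dense in $c_0$ in the sup norm. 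Further, $\delta_0$ alone suffices, since $\I_1$ is shift-invariant: shifting a nilsequence gives a nilsequence with base point moved. So the key point is $\delta_0\in\I_1$. By Fourier inversion, $\delta_0(n)=\int_{\T} e(nt)\,dt$, and each $n\mapsto e(nt)$ is a $1$-step nilsequence (a rotation on the circle). The map $t\mapsto e(nt)$ is continuous and bounded by $1$, and Lebesgue measure has bounded variation, so $\delta_0$ is an integral combination with $S=\T$.

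For (iv), I would cite the theorem of Bergelson–Host–Kra and Leibman: multiple correlations on a $k$-step nilsystem, even with merely bounded measurable $f_i$, are integral combinations of $k$-step nilsequences, up to approximation. Concretely, by density one first takes the $f_i$ continuous, since $\MCS$ depends continuously in sup norm on the $f_i$ in $L^{\ell+1}$ by Hölder. Then one writes
\[a(n)=\int_X F\bigl((\alpha,\alpha^2,\dots,\alpha^\ell)^n\cdot(x,\dots,x)\bigr)\,d\mu(x),\qquad F=f_1\otimes\cdots\otimes f_\ell,\]
with weight $f_0(x)\,d\mu(x)$. This exhibits $a$ as an integral over $S=X$ of basic nilsequences on $X^\ell$, which is a $k$-step nilmanifold, with measure $\nu=f_0\,d\mu$ of bounded variation. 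The approximation from bounded measurable to continuous $f_i$ is justified because $\I_k$ is closed in $\ell^\infty$. This density step, showing that the approximation is uniform in $n$, is the main point needing care, and the Hölder bound $\sup_n|a(n)-a'(n)|\le\sum_i\|f_i-f_i'\|_{L^{\ell+1}}\prod_{j\ne i}\|\cdot\|_\infty$ handles it.
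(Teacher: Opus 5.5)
Your proposal is correct and follows the paper's proof essentially step by step: the step convention for (i), Leibman's orbit-closure theorem plus Dirac measures for (ii), Fourier inversion on $\T$ for (iii), and for (iv) the tensor-product nilsequence on $X^\ell$ weighted by $f_0\,d\mu$ after approximating by continuous functions. The differences are cosmetic: in (iii) you reduce to $\delta_0$ via density and shift-invariance instead of building the weight $\omega_\varepsilon$ directly, and in (iv) you control the approximation with H\"older in $L^{\ell+1}$ rather than iterated Cauchy--Schwarz in $L^2$. For the latter, either keep the continuous approximants uniformly bounded or put every factor in $L^{\ell+1}$, so that your bound stays uniform in $n$.
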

\begin{proof}
    Property $(i)$ follows immediately from the convention that a $k$-step nilsystem is also a $(k+1)$-nilsystem. The inclusions $\NN_k\subseteq\GenN_k$ and $\mathcal{I}_k\subseteq\GenI_k$ hold since continuous functions are Riemann integrable and Leibman's orbit-closure Lemma (see Lemma~\ref{lem:leibman-orbit}) guarantees that all orbit closures are ergodic nilsystems. The other inclusions in $(ii)$ follow from the fact that an integral combination of nilsequences may be taken with respect to a Dirac measure $\nu$. The inclusion $\Null_\infty\subseteq\Null_p$ is immediate. For the inclusion $\Null_\infty\subseteq \mathcal{I}_1$, take some $a\in \Null_\infty$. Then for every $\varepsilon>0$, the set $E_\varepsilon(a)\coloneqq \{n\in \mathbb{Z}: |a(n)|>\varepsilon\}$ is finite. Put $\psi_t(n)=e^{2\pi i nt}$ for all $t\in \R/\Z$ and note that $\psi_t$ is a basic $1$-step nilsequence (defined on the $1$-step nilsystem $(\T,R_t)$). Now, set
    $$\omega_\varepsilon(t) \coloneqq \sum_{m\in E_\varepsilon} a(m)\cdot \overline{\psi_t(m)}$$ and observe that for every $n\in \Z$ we have,
    $$\int_\T \omega_\varepsilon(t)\psi_t(n)\,dt=\sum_{m\in E_\varepsilon}a(m)\cdot\int_{\T}\psi_t(n)\cdot\overline{\psi_t(m)}\,dt = a(n)\cdot \mathbf{1}_{E_\varepsilon}(n).$$ Taking $\varepsilon\rightarrow 0$, we see that $a\cdot \mathbf{1}_{E_\varepsilon}\rightarrow a$ uniformly. This shows that $a\in \I_1$, giving $(iii)$. Finally, to prove $(iv)$, let $\XX$ be a $k$-step nilsystem and $a\in \MCS_{\ell}(\XX)$. Write $f_0,f_1,\dots,f_{\ell}$ for the bounded functions associated with $a$. Since $C(\XX)$ is dense in $L^2(\mu)$, for every $\varepsilon>0$ we can find continuous $\tilde{f}_i\in C(\XX)$ with $\|\tilde f_i\|_{L^\infty}\leq \|f_i\|_{L^\infty}$ and $\|f_i-\tilde{f}_i\|_{L^2}<\varepsilon$. Let $\tilde{a}$ denote the multiple correlation sequence obtained by replacing $f_i$ with $\tilde{f}_i$ for all $i=0,\dots,\ell$. Applying the Cauchy--Schwarz inequality iteratively, we see that $\|a-\tilde{a}\|_{\ell^\infty} =O(\varepsilon)$. Now $\tilde a\in \I_k$ by definition: take $S=G/\Gamma$, take $\nu=\tilde f_0\,d\mu_{G/\Gamma}$, a complex Borel measure of bounded variation, and for $x\in S$ take $$\psi_x(n)=\prod_{i=1}^{\ell}\tilde f_i(\alpha^{in}x) = F(\boldsymbol\alpha^{n}\mathbf x),\qquad \boldsymbol\alpha=(\alpha,\alpha^2,\dots,\alpha^{\ell})\in G^{\ell},\quad \mathbf x=(x,\dots,x),$$ where $F=\tilde f_1\otimes\dots\otimes \tilde f_{\ell}$ is continuous on the $k$-step nilmanifold $(G/\Gamma)^{\ell}$. Thus $\psi_x\in \NN_k$ for every $x$, the family is uniformly bounded, and $x\mapsto \psi_x(n)$ is continuous. Taking $\varepsilon\rightarrow 0$ we conclude that $a\in \I_k$, as required.
\end{proof}
\subsection{The nil plus null decomposition}
Current literature on multiple correlation sequences predominantly focuses on establishing results of the \emph{nil plus null} form. The most classical form of this type is due to Bergelson, Host and Kra \cite{BHK}.
\begin{theorem}[Bergelson--Host--Kra]\label{thm:BHK}
    For every $\ell\geq 1$ and every ergodic measure-preserving system $\XX=(X,\mathcal{B},\mu,T)$, we have
    $$\MCS_{\ell}(\XX)\subseteq \NN_\ell +\Null_1.$$
\end{theorem}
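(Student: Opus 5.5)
The plan follows the Host--Kra structure theory, with the Bergelson--Host--Kra formula for correlations on nilsystems supplying the nilsequence. The first step is to reduce to an inverse limit of nilsystems. Let $\ZZ_{\ell-1}$ denote the Host--Kra factor of $\XX$ of order $\ell-1$. For $f_0\in L^\infty(\mu)$, the Host--Kra characteristic factor theorem gives
\[
\frac1N\sum_{n=M}^{M+N}\Big|\int f_0\prod_{i=1}^\ell f_i\circ T^{in}\,d\mu-\int f_0\prod_{i=1}^\ell \mathbb{E}(f_i\mid \ZZ_{\ell})\circ T^{in}\,d\mu\Big|\to0
\]
uniformly in $M$. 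I will get this by writing $f_i=\mathbb E(f_i|\ZZ_\ell)+(f_i-\mathbb E(f_i|\ZZ_\ell))$ and expanding multilinearly. Each term containing a factor orthogonal to $\ZZ_\ell$ has vanishing averages of its squared modulus: squaring the correlation adds one fold, via the $L^2$-van der Corput / Cauchy--Schwarz argument, and the result is controlled by the Gowers--Host--Kra seminorm $\|\cdot\|_{U^{\ell+1}}$, which vanishes on $L^2(\ZZ_\ell)^\perp$. This is the reason I pass to the order-$\ell$ factor. Uniformity in $M$ follows because van der Corput works on arbitrary intervals, so the error lies in $\Null_1$. Hence it suffices to treat $f_1,\dots,f_\ell$ measurable with respect to $\ZZ_\ell$, and we may replace $f_0$ by $\mathbb E(f_0\mid\ZZ_\ell)$ as well.

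Next, $\ZZ_\ell$ is an inverse limit of $\ell$-step ergodic nilsystems. Approximating each $f_i$ in $L^2$ by functions measurable with respect to a nilsystem factor changes the correlation by $O(\varepsilon)$ uniformly in $n$, by the iterated Cauchy--Schwarz argument used in Lemma~\ref{inclusions}(iv). Since $\NN_\ell$ is closed and $\Null_1$ is closed under uniform limits, it remains to treat an ergodic $\ell$-step nilsystem $G/\Gamma$ with, after a further $L^2$ approximation, continuous $f_i$.

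For continuous functions on an ergodic nilsystem, the map
\[
n\mapsto \int f_0(x)\prod_{i=1}^\ell f_i(\alpha^{in}x)\,d\mu(x)
\]
is itself a nilsequence. Indeed, by Leibman's orbit-closure Lemma~\ref{lem:leibman-orbit}, the orbit of the diagonal under $(\alpha,\alpha^2,\dots,\alpha^\ell)$ (together with the $\mathrm{id}$ coordinate) lives on a sub-nilmanifold $Y\subseteq (G/\Gamma)^{\ell+1}$ of the form $H\Delta/\ldots$. The correlation equals $\int_{\Delta} F(\boldsymbol\alpha^n y)\,dm_\Delta(y)$ with $F=f_0\otimes\cdots\otimes f_\ell$. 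Averaging over the diagonal, a closed sub-nilmanifold, then gives a continuous function of the point $\boldsymbol\alpha^n\Delta$ in the nilmanifold $G^{\ell+1}/(\text{normalizer data})$. This is the Bergelson--Host--Kra argument: the Hall--Petresco/Leibman group generated by $\Delta$ and $\boldsymbol\alpha$ is nilpotent of step $\le\ell$, and the fibre average is continuous in the base point.

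The main obstacle is this last step, namely showing that the diagonal-averaged function is a genuine continuous function on an $\ell$-step nilmanifold and that the step does not grow. I would first try the BHK route via the Host--Kra cube group $G^{[\ell]}$ and the arithmetic-progression (Leibman) group $G^{\mathrm{HP}}$, checking step $\le \ell$ by the filtration $G_i$ assigned to degree $i$. If that fails, I would fall back to a weaker conclusion: show the correlation is a uniform limit of nilsequences, which still lies in $\NN_\ell$ since $\NN_\ell$ is closed.
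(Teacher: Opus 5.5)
The paper does not prove this statement; it only cites \cite{BHK}. Your first step is the standard BHK reduction and is fine: squaring turns the correlation into an $\ell$-fold average on $(X\times X,T\times T)$, controlled uniformly over shifted intervals by $\|f_i\otimes\bar f_i\|_{U^{\ell}(\mu\times\mu)}$, which vanishes when $f_i\perp\ZZ_\ell$.

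\textbf{The gap is in your third step.} It is false that $n\mapsto\int f_0\prod_{i=1}^{\ell}f_i(\alpha^{in}x)\,d\mu(x)$ lies in $\NN_\ell$ for continuous $f_i$ on an ergodic $\ell$-step nilsystem. Your fallback (uniform limits of nilsequences) is the same class $\NN_\ell$, so it does not help.

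Here is a counterexample. Let $\ell=2$ and let $\XX$ be an ergodic Heisenberg nilsystem, so $\XX=\ZZ_2(\XX)$ and your first two steps do nothing. Let $f$ be a continuous nilcharacter with non-zero vertical frequency, and take $f_0=\bar f$, $f_1=f$, $f_2=1$. Then $a(n)=\int f(\alpha^nx)\overline{f(x)}\,d\mu(x)\rightarrow0$ by Theorem~\ref{thm:ell1}, while $a(0)=\|f\|_2^2>0$. On the other hand $\NN_\ell\cap c_0=\{0\}$. Indeed, orbit closures of nilrotations are minimal, so every basic nilsequence satisfies $\sup_n|\psi(n)|=\limsup_{|n|\rightarrow\infty}|\psi(n)|$, and this passes to uniform limits. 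Hence $a\notin\NN_2$; it lies in $\Null_1$ with zero nil part.

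Your sketch breaks at the claim that averaging over the diagonal gives a continuous function of the base point. The sets $\boldsymbol\alpha^n D$ are orbits of the varying conjugates $\boldsymbol\alpha^nG^{\Delta}\boldsymbol\alpha^{-n}$, and $G^{\Delta}$ is not normal in the orbit-closure group. So there is no quotient nilmanifold on which $n\mapsto\int_{\boldsymbol\alpha^n D}F$ is a continuous function of the orbit point. The step issue you single out is harmless once $\XX$ is $\ell$-step, since everything lives in $G^{\ell+1}$.

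\textbf{What is needed on the nilsystem is a nil-plus-null statement.} Let $P$ be the group of the orbit closure of $D$, and let $Z$ be the orbit through the base point of the normal closure $K$ of $G^{\Delta}$ in $P$. Because $K$ is normal in $P$, the map $p\mapsto\int_{pZ}F$ is continuous on $P/(P\cap\Gamma^{\ell+1})$. So $\int_{\boldsymbol\alpha^nZ}F$ is an $\ell$-step nilsequence. The difference $\int_{\boldsymbol\alpha^nD}F-\int_{\boldsymbol\alpha^nZ}F$ is null by Leibman \cite[Proposition~3.1]{Leibman1}; this is Lemma~\ref{str-err:lem}, and in the example above it is all of $a$. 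That this difference is generally non-zero is exactly the failure of Conjecture~\ref{Leibman:conj} that this paper studies.

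Once the nilsystem correlations are only nil plus null, your second step must change too. Closedness of $\NN_\ell$ and of $\Null_1$ separately does not let you pass to the limit of $a_m=\psi_m+\epsilon_m$. You need the stability bound $\|\psi\|_\infty\leq\|\psi+\epsilon\|_\infty$ for $\psi\in\NN_\ell$ and $\epsilon\in\Null_1$. To see it: if a basic nilsequence exceeds $\|\psi+\epsilon\|_\infty+\eta$ at one point, it does so on a set of positive lower Banach density, by unique ergodicity of its orbit closure, which contradicts the nullity of $\epsilon$. This bound makes $(\psi_m)$ uniformly Cauchy, and closedness of both classes then finishes the argument.
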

Many generalizations of this result exist in the literature (see \cite{Leibman1,Leibman2,integerpart,MC4,decomposition,Frankucajoint,MCS,subsequences,MC3,morejoint,ShalomGrothendieck}, among many others). We emphasize one generalization that is relevant to this paper. In \cite{Leibman1} Leibman extended the structural result of Bergelson, Host and Kra to polynomial iterates. He proved
\begin{theorem}[Polynomial nil plus null decomposition]
    Let $\XX=(X,\mathcal{B},\mu,T)$ be an ergodic invertible measure-preserving system and $f_0,\dots,f_\ell\in L^\infty(\mu)$. Let $p_1,\dots,p_{\ell}\in \Z[n]$ be polynomials with integer coefficients. Then,
    $$n\mapsto \int_X f_0(x)\cdot\prod_{i=1}^{\ell} f_i(T^{p_i(n)}x)\, d\mu$$ is a sum of a nilsequence and a null sequence.
\end{theorem}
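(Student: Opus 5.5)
We follow Leibman's strategy, which is the Bergelson--Host--Kra argument adapted to polynomial iterates. The plan is to reduce to a nilsystem factor, show that the correlation sequence is then an integral combination of nilsequences, and finally split such a combination into a nilsequence plus a null sequence.

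\textbf{Step 1: reduction to a nilsystem.} By the polynomial analogue of the Host--Kra characteristic factor theorem, there is $k=k(p_1,\dots,p_\ell)$ such that the $k$-step Host--Kra factor $Z_k$ is characteristic for the averages $\frac1N\sum_n\prod_i f_i(T^{p_i(n)}x)$. We apply this in its uniform form, namely along every Følner sequence of intervals. This version is needed because we want to control $\frac1N\sup_M\sum_{n=M}^{M+N}$, not merely averages starting at $1$.
\begin{itemize}
\item If some $f_i$ satisfies $\mathbb E(f_i\mid Z_k)=0$, we run van der Corput's lemma with PET induction on the weighted averages $\frac1N\sum_{n=M}^{M+N}|a(n)|^2$. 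Writing $|a(n)|^2$ as a correlation on $X\times X$, this shows that $a\in \Null_2\subseteq\Null_1$.
\item In general, we therefore replace each $f_i$ by $\mathbb E(f_i\mid Z_k)$ at the cost of a null error.
\item Since $Z_k$ is an inverse limit of $k$-step nilsystems, we approximate in $L^2$ by a finite-step factor $Y=G/\Gamma$. Cauchy--Schwarz gives a uniform error $O(\varepsilon)$.
\end{itemize}

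\textbf{Step 2: the nilsystem case.} On $Y$ we approximate $f_i$ by continuous functions, exactly as in the proof of Lemma~\ref{inclusions}(iv). For $x\in Y$, the sequence
$$\psi_x(n)=\prod_i \tilde f_i(\alpha^{p_i(n)}x)$$
is a polynomial nilsequence. By Leibman's lifting, it is realized as a basic nilsequence on a larger nilmanifold. Hence $a$ is, up to a small uniform error, the integral combination $\int_Y \tilde f_0(x)\psi_x(n)\,d\mu(x)$.

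\textbf{Step 3: main obstacle.} The hard part is showing that such an integral combination is itself nil plus null. We would proceed as follows.
\begin{itemize}
\item Use Leibman's equidistribution theory for polynomial orbits. For $\mu$-almost every $x$, the orbit of the diagonal point $(x,\dots,x)$ under the polynomial sequence $n\mapsto(\alpha^{p_1(n)},\dots,\alpha^{p_\ell(n)})$ equidistributes on a sub-nilmanifold $Y_x$. The family $Y_x$ varies in a controlled, algebraic way with $x$.
\item Integrating over $x$ yields a nilmanifold $\tilde Y\subseteq Y^{\ell+1}$ together with a polynomial sequence $g(n)$ such that $\int \tilde f_0\otimes\psi_x\,d\mu$ can be compared with $\Phi(g(n)\tilde e)$ for a continuous $\Phi$ on $\tilde Y$. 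Concretely, $\Phi$ is the conditional expectation of $\tilde f_0\otimes\cdots\otimes\tilde f_\ell$ onto the invariant fibers.
\item The difference between the two is controlled by an $L^2$ averaging argument over windows $[M,M+N]$, using uniform equidistribution on $\tilde Y$. This shows the difference lies in $\Null_2$.
\end{itemize}
Taking limits, we use that $\NN$ is closed and that $\Null_1$ is stable under uniform limits of sums. This last point requires some care: we would pass to the limit by a diagonal argument, choosing the nil parts to converge uniformly and absorbing the remaining errors into the null part.
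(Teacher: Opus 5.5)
Your reductions (characteristic factor via the $X\times X$ trick, passage to a finite-step nilfactor, continuous approximation) are those of Leibman's proof, which the paper cites rather than reproves. The gap is Step 3. It carries the actual content of the theorem for nilsystems, and you give no mechanism for it.

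First, you never specify the ``invariant fibers'' that define $\Phi$, and that choice is the whole point. Write $D\subseteq Y^{\ell+1}$ for the diagonal, $g(n)=(1,\alpha^{p_1(n)},\dots,\alpha^{p_\ell(n)})$ and $F=\tilde f_0\otimes\cdots\otimes\tilde f_\ell$, so that $a(n)=\int_{g(n)D}F\,d\mu_{g(n)D}$. The fibers must be the orbits $xZ$ of the normal closure $K$ of the diagonal subgroup inside the group $Q$ whose orbit is the orbit closure of $D$; this is the decomposition of Lemma~\ref{str-err:lem}. Normality of $K$ is what makes $F_s(x)=\int_{xZ}F\,d\mu_{xZ}$ well defined and continuous, so that $s(n)=F_s(g(n)\cdot e)$ is a polynomial nilsequence. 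The other natural choices fail: the diagonal subgroup itself is not normal in $Q$, and averaging over the full orbit closure returns only the Ces\`aro mean of $a$, leaving a non-null remainder.

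Second, ``an $L^2$ averaging argument using uniform equidistribution'' does not explain why $b(n)=\int_{g(n)D}(F-F_s)$ is null; you must say why the limit vanishes. The needed argument runs as follows.
\begin{itemize}
\item $\frac1N\sum_{n=M}^{M+N}|b(n)|^2$ is an average of integrals of $(F-F_s)\otimes\overline{(F-F_s)}$ over $(g(n),g(n))(D\times D)$.
\item Equidistribution identifies the limit, uniformly in $M$ and after passing to residue classes if needed, with the integral against Haar measure on the orbit closure of $D\times D$.
\item That orbit closure is an orbit of a closed group containing $\Delta(G)\times\Delta(G)$ and all $(g(n),g(n))$. Conjugating $(\Delta(h),1)$ by $(g(m),g(m))$ gives $(g(m)\Delta(h)g(m)^{-1},1)$, so this group contains $K\times\{1\}$.
\item $F-F_s$ integrates to zero on every $K$-orbit, so the limit is $0$.
\end{itemize}
This is Leibman's Proposition~3.1, the fact the paper imports in Lemma~\ref{str-err:lem}. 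Without it or an equivalent, Step 3 merely restates the theorem for nilsystems. Your first bullet, on orbit closures $Y_x$ of individual diagonal points, does not help: pointwise equidistribution integrated over $x$ controls only Ces\`aro averages of $a$, not its splitting into nil and null parts.

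The passage to the limit also needs a real argument rather than a diagonal choice, since you must know that the nil parts of your approximants converge. The clean reason is that $\|s\|_\infty\le\|s+e\|_\infty$ for $s\in\NN$ and $e\in\Null_1$. Indeed, $s$ returns within $\varepsilon$ of each of its values on a set of positive lower uniform density, while $\{|e|>\delta\}$ has zero upper Banach density. Hence the decomposition is unique, and the nil parts of uniformly convergent approximants are Cauchy in the closed space $\NN$. In the connected case the paper instead bounds the nil parts directly, using that the coordinate marginals of $\mu_{g(n)Z}$ are Haar.
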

The distinction between this and the theorem of Bergelson, Host and Kra is that here the step of the nilsequence is not specified, but it is shown to be bounded by a constant depending only on $\ell$ and the polynomials $p_1,\dots,p_{\ell}$. All of these results may be interpreted as follows: every multiple correlation sequence is equal to a \emph{structured sequence} (here a nilsequence) up to a \emph{small error} (here a null sequence). However, unfortunately, null sequences are far less understood than nilsequences. Since there exist null sequences that are not multiple correlation sequences, we can ask what else can be said about this error term. Finding the right notions of structure and smallness in this setting is related to a fundamental question of Frantzikinakis \cite[Problem 1]{Fran}. The question is formalized in great generality, but we focus on the case of a single transformation for the sake of simplicity.
\begin{quest}[Frantzikinakis' question]\label{Fran:quest}
    Let $\ell\geq 1$ and let $\XX=(X,\mathcal{B},\mu,T)$ be an ergodic measure-preserving system. Is it true that
    $\MCS_{\ell}(\XX) \subseteq \GenI_{\ell}\, ?$
\end{quest}
Bri\"et and Green \cite{BrietGreen} showed that Question~\ref{Fran:quest} has a negative answer if $\GenI_\ell$ is replaced by $\I_\ell$: already for $\ell=2$ there is an ergodic system $\XX$ with $\MCS_2(\XX)\not\subseteq\I_2$. On the other hand, the question has a positive answer in the following special cases: $\ell=1$ (by the Herglotz--Bochner theorem), nilsystems of step at most $\ell$ (Lemma~\ref{inclusions}(iv)), and systems that are mixing of all orders, such as Bernoulli systems. The next natural case is that of general nilsystems, where for $\ell=1$ the answer is known. The following result was established, first by Stepin \cite{Stepin} for $G$ connected and simply connected, then by Host, Kra and Maass \cite{HKM} for $2$-step general (not necessarily connected) nilsystems, and for all ergodic nilsystems by Ackelsberg, Richter and the author \cite{ARS} and independently by Frantzikinakis and Kuca \cite[Theorem 3.7]{FranKuca}.
\begin{theorem}[Riemann--Lebesgue property for correlation sequences]\label{thm:ell1}
    Let $G/\Gamma$ be an ergodic nilsystem and $f,g\in L^\infty(\mu_{G/\Gamma})$, one of which is orthogonal to the abelian factor\footnote{Here we assume the standard presentation, i.e., $G=\langle G^0,\alpha\rangle$} $G/G_2\Gamma$. Then $$\lim_{n\rightarrow\infty}\int_{G/\Gamma} f(\alpha^nx)\cdot g(x)\, d\mu(x)= 0.$$
\end{theorem}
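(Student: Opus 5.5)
Since the bilinear correlation $n\mapsto\int f(\alpha^nx)\,g(x)\,d\mu$ depends continuously on $f$ and $g$ in $L^2$, uniformly in $n$, I first reduce to a dense class. Decompose $L^2(\mu)$ under the action of the connected central subgroup $Z=G_k^0$ (the last nontrivial term of the lower central series, intersected with $G^0$; compact modulo $\Gamma\cap Z$), using vertical characters $\chi$ of $Z/(Z\cap\Gamma)$. We may assume $f$ is a vertical character function, $f(zx)=\chi(z)f(x)$; the general case follows by summing over $\chi$ and using orthogonality of distinct isotypic components. The correlation vanishes unless $g$ also has character $\chi$. The proof then proceeds by induction on the step $k$.

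If $\chi$ is trivial, $f$ and $g$ descend to the $(k-1)$-step nilsystem $G/Z\Gamma$. This system is still ergodic with the standard presentation, and its abelian factor is the same $G/G_2\Gamma$ (when $k\ge 2$). Orthogonality to the abelian factor is preserved, so induction applies. The base case $k=1$ is vacuous, since then $f$ or $g$ is orthogonal to everything.

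If $\chi$ is nontrivial, I use a van der Corput/Hilbert-space argument. It suffices to show that $\int f(\alpha^nx)\,g(x)\,d\mu\to0$ for every $g$ of character $\chi$, i.e.\ that $U^nf\to0$ weakly in the $\chi$-isotypic subspace $H_\chi$. By the spectral theorem, this is equivalent to the spectral measure $\sigma_f$ of $f$ having Fourier coefficients $\widehat{\sigma_f}(n)\to0$, i.e.\ being Rajchman. I compute $\langle U^nf,f\rangle$ using the commutator structure. For $u\in G_{k-1}^0$, the map $n\mapsto[\alpha^n,u]$ is a homomorphism into $Z$, so translating $f$ by $u$ twists the correlation:
$$\langle U^n (u f),uf\rangle=\chi([\alpha,u])^{n}\langle U^nf,f\rangle$$
up to the appropriate sign conventions. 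Hence $\sigma_{uf}$ is the translate of $\sigma_f$ by the character $\theta(u)=\chi([\alpha,u])$, while $\|uf\|=\|f\|$.

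The key step is as follows. Since $\chi$ is nontrivial and $G=\langle G^0,\alpha\rangle$, the map $u\mapsto\chi([\alpha,u])$ from the connected group $G_{k-1}^0$ to $S^1$ is a nontrivial continuous homomorphism; otherwise $\alpha$ would commute modulo $\ker\chi$ with $G_{k-1}^0$, contradicting that $Z$ is generated by commutators of $G$ with $G_{k-1}$ together with ergodicity. So its image is all of $S^1$, and $\sigma_f$ is translated continuously by every rotation. To convert this into decay, average over $u$ in a compact set $K\subset G_{k-1}^0$ with Haar measure. The averaged measure $\int_K\sigma_{uf}\,du$ is $\sigma_f$ convolved with an absolutely continuous measure, hence its Fourier coefficients tend to $0$ by Riemann--Lebesgue. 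Then $\sigma_f$ itself must be absolutely continuous (so Rajchman). To see this, note that the $uf$ for $u$ near the identity are close to $f$ in $L^2$, so $\sigma_{uf}$ is close to $\sigma_f$ in total variation. Together with translation invariance along a continuous family of rotations, this forces $\sigma_f$ to be nearly invariant under small rotations in variation norm, and hence absolutely continuous.

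The main obstacle is making this last deduction rigorous: closeness in total variation of $\sigma_{uf}$ and $\sigma_f$ requires care, since $\sigma_{f}-\sigma_{uf}$ is controlled by $\|f-uf\|\,\|f\|$ via polarization. It also requires verifying the nontriviality of $u\mapsto\chi([\alpha,u])$ in the disconnected setting. For the latter I would use the standard presentation and an argument that the ergodic nilsystem's $k$-th commutator group is generated by commutators involving $\alpha$ and $G^0$.
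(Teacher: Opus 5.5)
Your argument is correct. It follows the spectral route that the paper attributes to the works it cites: the paper gives no proof of its own, only remarking that those proofs show the orthocomplement of $L^2(\mu_{G/G_2\Gamma})$ has Lebesgue maximal spectral type. You instead establish the weaker but sufficient absolute continuity of every spectral measure there, via $\sigma_{uf}=(R_{\chi([u,\alpha])})_*\sigma_f$ on $H_\chi$, the bound $\|\sigma_{uf}-\sigma_f\|_{\mathrm{TV}}\le 2\|f\|\,\|uf-f\|$, and induction on the step.

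The nontriviality of $u\mapsto\chi([\alpha,u])$ that you flag does hold. Suppose it failed. Then for $u\in\Gamma\cap G_{k-1}$ the character $g\mapsto\chi([g,u])$ of $G/G_2\Gamma$ equals $1$ at $\alpha$, hence is trivial by ergodicity. Now for $g\in G^0$, the characters $u\mapsto\chi([g,u])$ of $G_{k-1}$ are all trivial on $G_k$ and on $\Gamma\cap G_{k-1}$, so they lie in a discrete set; they depend continuously on $g$ and equal $1$ at $g=e$, so they are all trivial. This makes $\chi$ trivial on $[G^0,G_{k-1}]$ and on $[\alpha,G_{k-1}]$, hence on $G_k=[G,G_{k-1}]$, contradicting $\chi\neq 1$.
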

Many of the proofs mentioned above establish this fact by proving that the orthogonal complement in $L^2(\mu_{G/\Gamma})$ of the abelian factor (which corresponds to $L^2(\mu_{G/G_2\Gamma})$ under natural assumptions) is of Lebesgue maximal spectral type, so that the Riemann--Lebesgue lemma applies. The following conjecture of Frantzikinakis and Kuca can be interpreted as a higher order form of a Riemann--Lebesgue lemma for nilsystems.
\begin{Conj}[Frantzikinakis--Kuca {\cite[Conjecture 4]{FranKuca}}]\label{FK:conj}
    Let $(G/\Gamma,\mu_{G/\Gamma},R_\alpha)$ be an ergodic nilsystem and $k_1,\dots,k_\ell\in \Z$ non-zero and distinct. Then
    $$\lim_{n\rightarrow\infty}\int_{G/\Gamma}f_0(x)\cdot f_1(\alpha^{k_1n}x)\cdots f_\ell(\alpha^{k_\ell n}x)\,d\mu_{G/\Gamma}(x)=0$$
    whenever $f_j$ is orthogonal to the $\ell$-step factor (i.e., $G/G_{\ell+1}\Gamma$ under the presentation $G=\langle G^0,\alpha\rangle$) for some $j$.
\end{Conj}
Leibman also proposed a related conjecture \cite{Leibman1} for polynomial iterates. Let $G$ be a nilpotent group. We say that a sequence $g:\Z\rightarrow G$ is a \emph{polynomial sequence} if
\begin{equation}\label{eq:polynomial}
    g(n) = a_1^{p_1(n)}\cdots a_\ell^{p_\ell(n)}
\end{equation}
where $a_1,\dots, a_{\ell}\in G$ and $p_1,\dots,p_{\ell}$ are integer-valued polynomials.
\begin{Conj}[Leibman's conjecture]\label{Leibman:conj}
Let $X=G/\Gamma$ be a connected nilmanifold, let $\pi:G\rightarrow X$ denote the quotient map and let $Y=\pi(H)$, where $H$ is a connected rational closed subgroup of $G$. Let $g:\Z\rightarrow G$ be a polynomial sequence with $g(0)=1_G$ such that $g(\Z)Y = \bigcup_{n\in \Z} g(n)Y$ is dense in $X$, and assume that $G=\langle G^0,(g(n))_{n\in \Z}\rangle$. Let $Z$ be the \emph{normal closure} of $Y$ in $X$. Then,
$$\lim_{n\rightarrow\infty}\left( \int_{g(n)Y} F\, d\mu_{g(n)Y} - \int_{g(n)Z} F\,d\mu_{g(n)Z}\right) =0,$$ for every continuous $F\in C(X)$, where $\mu_{g(n)Y}$ and $\mu_{g(n)Z}$ are the Haar measures on $g(n)Y$ and $g(n)Z$, respectively.
\end{Conj}
In this conjecture, the normal closure of $Y$ is defined to be the nilmanifold $Z=\pi(\langle\langle H \rangle\rangle)$ where $\langle\langle H \rangle\rangle$ is the normal closure of $H$ in $G$. 

These conjectures were recently refuted by the author in \cite{FKshalom}. However, the counterexample, which was derived via the theory of Fourier analysis on nilmanifolds and symbolic computations, does not provide a negative answer to Question~\ref{Fran:quest}. In fact, it is a $1$-step generalized nilsequence.

The purpose of this paper is to provide an alternative to both of these conjectures, and provide a positive answer to Question~\ref{Fran:quest} in a special case. Our main finding is that null generalized nilsequences (i.e., elements in $\GenNz_{\ell}$) are a candidate for a structured alternative to the general null sequences. 
\begin{theorem}[Structure theorem for multiple correlation sequences on nilsystems]\label{FK:thm}
    Let $\ell\geq 1$ and let $\XX=(G/\Gamma,\mathcal{B},\mu_{G/\Gamma},R_\alpha)$ be an ergodic nilsystem of any step. Then,
    $$\MCS_{\ell}(\XX)\subseteq \NN_{\ell} + \GenNz_{\ell} \subseteq \GenI_{\ell}.$$ 
\end{theorem}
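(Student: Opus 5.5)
The plan is to realize $a(n)$ as the value of a single function along a polynomial orbit in an auxiliary nilmanifold, and then split that function into a generic part and a part supported on a Haar-null set. Write $\mathbf X=X^{\ell+1}$ with $X=G/\Gamma$, let $\mathbf g=(1,\alpha,\alpha^2,\dots,\alpha^{\ell})\in G^{\ell+1}$, and let $\Delta=\{(x,\dots,x):x\in X\}$, a subnilmanifold of $\mathbf X$ carrying the measure $\mu_\Delta\cong\mu_{G/\Gamma}$. With $F=f_0\otimes\cdots\otimes f_\ell$ we have
$$a(n)=\int_{\mathbf g^n\Delta}F\,d\mu_{\mathbf g^n\Delta}.$$
Here $\mathbf g^n\Delta=\bigl(\mathbf g^n\Delta(G)\mathbf g^{-n}\bigr)\mathbf g^n\Gamma^{\ell+1}$. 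The conjugates $\mathbf g^n\Delta(G)\mathbf g^{-n}$ depend polynomially on $n$. So, in Leibman's framework (Conjecture~\ref{Leibman:conj} and Lemma~\ref{lem:leibman-orbit}), the subnilmanifolds $\mathbf g^n\Delta$ are the points $\rho(n)$ of a polynomial orbit in a homogeneous space $W$ parametrizing translates and conjugates of $\Delta$. I will pass to the orbit closure of $\rho$, which is an ergodic nilmanifold $W'$ by Leibman's orbit-closure lemma, and then use the standard lift of a polynomial orbit to a linear orbit. This produces an ergodic nilsystem $(W',R_\beta)$ and a function $\Phi$ on $W'$ with $a(n)=\Phi(\beta^n w_0)$, where $\Phi(w)=\int_{w}F$, integrating over the subnilmanifold parametrized by $w$.

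For continuous $f_i$ I will show the following. There is a Haar-null, closed, "degenerate" set $S\subseteq W'$, namely the points whose associated subnilmanifold has smaller orbit closure or dimension, such that off $S$ the value $\Phi(w)$ agrees with a continuous function $\Phi_{\mathrm{gen}}$. The function $\Phi_{\mathrm{gen}}$ is obtained by replacing the subnilmanifold by its generic saturation, i.e.\ by integrating over the Host--Kra/Ziegler limiting measure, the one appearing in Theorem~\ref{thm:BHK}. Structurally, $\Phi_{\mathrm{gen}}$ only depends on the projections of $f_i$ to the $\ell$-step factor $G/G_{\ell+1}\Gamma$. This is the familiar fact that arithmetic progressions of length $\ell+1$ are controlled by $U^{\ell+1}$, and that the corresponding limit measure is a Haar measure on a sub-nilmanifold of $(G/G_{\ell+1}\Gamma)^{\ell+1}$. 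Hence $n\mapsto\Phi_{\mathrm{gen}}(\beta^nw_0)$ is an $\ell$-step nilsequence. The remainder $\Phi-\Phi_{\mathrm{gen}}$ is bounded and supported on $S$. It is continuous off $S$, so its discontinuities lie in the null set $S$ and it is Riemann integrable; thus $n\mapsto(\Phi-\Phi_{\mathrm{gen}})(\beta^nw_0)$ lies in $\GenNz$. The step of this term must also be brought down to $\ell$. I intend to do this by observing that membership in $S$ is detected by a rational condition on the projection of $w$ to an $\ell$-step quotient, namely the horizontal/polynomial data of degree $\le\ell$ in $n$. One can then average out the higher-step coordinates on $S$, which does not change the values along the orbit since the orbit meets $S$ only in a controlled way.

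To pass from continuous to bounded $f_i$, I will avoid approximation, since neither summand space is obviously closed. Instead I will use the same formula directly: on the nondegenerate part the integral over the generic measure is defined for $L^\infty$ data and still yields an $\ell$-step nilsequence, because the projections to the $\ell$-step factor may be approximated in the uniform norm of the resulting sequences via Cauchy--Schwarz as in Lemma~\ref{inclusions}(iv), and $\NN_\ell$ is closed. The degenerate values $a(n)$ with $\rho(n)\in S$ are each finite integrals, and I will encode them into a Riemann-integrable symbol supported on $S$. The final inclusion $\NN_\ell+\GenNz_\ell\subseteq\GenI_\ell$ is immediate from Lemma~\ref{inclusions}(ii).

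I expect the main obstacle to be the middle step: identifying precisely the degenerate set $S$, proving it is Haar-null in $W'$, and proving that $\Phi$ coincides with a continuous $\ell$-step-measurable function off $S$. This is exactly where the refuted Frantzikinakis--Kuca and Leibman conjectures fail, so a naive ``$\Phi=\Phi_{\mathrm{gen}}$'' is false. I would first try to establish it by equidistribution of $\mathbf g^n\Delta$ within generic fibers, using Leibman's theorem fiberwise together with a rationality argument for the exceptional fibers.
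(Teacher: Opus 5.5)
The overall shape of your plan matches the paper: an auxiliary nilsystem, a structured part depending only on the $\ell$-step projections, an error carried by a null set, and the final inclusion via Lemma~\ref{inclusions}(ii). The central decomposition, however, cannot exist as stated. You ask for a \emph{closed} Haar-null $S\subseteq W'$ with $\Phi=\Phi_{\mathrm{gen}}$ off $S$. But every orbit point $\beta^nw_0$ corresponds to the closed subnilmanifold $\mathbf g^n\Delta$, so it is itself ``degenerate'', and the error is typically nonzero there.

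Concretely, take $\ell=1$ and $X$ the Heisenberg nilmanifold with $\alpha=(a,b,0)$, where $1,a,b$ are rationally independent. Set $f_1=f$ and $f_0=\overline f$ with $f(x,y,z)=e(z)\sum_{k\in\Z}e^{-\pi(y+k)^2}e(kx)$. Then $f\perp Z_1(X)$, so your $\Phi_{\mathrm{gen}}\equiv 0$. A direct computation, however, gives $|a(n)|=2^{-1/2}e^{-\pi n^2(a^2+b^2)/2}\neq 0$ for every $n$. Hence $S$ contains the whole orbit, which is dense in $W'$, so $S=W'$.

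The only null set that can carry the error is the countable dense orbit itself; this is the paper's symbol \eqref{Fdef}. Once you use it, ``continuous off $S$'' is no longer automatic. By Lemma~\ref{control:lem}, Riemann integrability is exactly the statement that $b(n_j)\to 0$ whenever $|n_j|\to\infty$ and $\omega_{n_j}\to\omega\notin E$, for some null set $E$. Proving this is the whole analytic content of Theorem~\ref{control:thm}. Your ``fiberwise Leibman plus rationality'' does not supply it: Leibman's theorem only gives Ces\`aro information, and the refutation of Conjectures~\ref{FK:conj} and~\ref{Leibman:conj} shows that is insufficient. The missing idea is local and analytic, and runs in five steps:
\begin{itemize}
\item Lift $\eta$ analytically on charts.
\item Observe that each horizontal character of the minimal rational $J_U$, evaluated on the conjugated subgroup, gives a real polynomial $Q_{\chi,h}(n,u)$ in $n$ of bounded degree, with real-analytic coefficients in $u$.
\item Let $E$ be the union of the zero sets of the top coefficients; it is null by Lemma~\ref{zeros}.
\item Deduce from Green--Tao (Proposition~\ref{approximation:prop}) that $\mu_{K_{n_j,\omega_{n_j}}}\to\mu_{J_U}$ along paths to $\omega\notin E$.
\item Only then use Ces\`aro vanishing plus unique ergodicity to show that the limiting integral is $0$.
\end{itemize}

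The step reduction is also unsupported. ``Averaging out the higher-step coordinates'' changes the values: in the example above, averaging over the vertical circle kills the entire error, which is $a$ itself. What you actually need is that the \emph{controlling} system be $\ell$-step. That is, $b(n_j)\to 0$ must hold already when only the $\ell$-step data $(T_\ell^{n_j}e,\dots,T_\ell^{\ell n_j}e)$ converge to a generic point, which is strictly stronger than control in your $r$-step $W'$. The paper obtains this for step $\ell+1$, after reducing to connected $X$ and to nilcharacters, by a product trick. It writes $|a(n)|^2$ as an integral on $\widehat X^{\ell+1}$, where $\widehat X=(G\times G)/D(\Gamma\times\Gamma)$ and $D$ is the diagonal copy of $G_{\ell+1}$; the relevant orbit lives on a copy of $Z_\ell(X)$, and \cite{BHK} shows that the tensor function is an error term. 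For step $>\ell+1$ it uses Frantzikinakis--Kuca degree lowering (Proposition~\ref{degreelowering:thm}) together with Lemma~\ref{lem:weak-to-norm}.

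Finally, for $L^\infty$ data you cannot sidestep approximation, since $\Phi$ then has no continuity at all. The paper does approximate, and it can pass to the limit because the null set in Theorem~\ref{control:thm} is independent of $F$. Consequently the sequences controlled by a fixed $(\Omega,\omega_0,E)$ form a uniformly closed class.
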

\begin{remark}
The same argument applies verbatim to the more general convention of Frantzikinakis and Kuca where the standard times for the multiple correlation sequence $n,2n,\dots,\ell n$ are replaced with arbitrary distinct times $k_1n,k_2n,\dots,k_\ell n$.
\end{remark}
Moreover, we provide the following alternative to Leibman's conjecture.
\begin{theorem}[Structured equidistribution in connected nilsystems]\label{leibman:thm}
    In the setting of Conjecture~\ref{Leibman:conj}, the difference $$b_f(n)\coloneqq \left(\int_{g(n)Y} f\, d\mu_{g(n)Y} - \int_{g(n)Z} f\,d\mu_{g(n)Z}\right)$$ is of the form $b_f(n)=B_f(S^n x)$, where $B_f$ is a Riemann-integrable map on an auxiliary ergodic nilsystem $(\Omega,S,\mu_{\Omega})$ and $x\in \Omega$, such that $B_f\equiv 0$, $\mu_{\Omega}$-almost everywhere. In particular, $b_f\in \GenNz_{s}$ for some $s\in \N$.
\end{theorem}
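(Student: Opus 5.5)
We need to prove Theorem~\ref{leibman:thm}. The plan is to realize the whole family of measures $\mu_{g(n)Y}$ as the orbit of a single point in a larger nilsystem, and then express $b_f$ as a Riemann-integrable function on that system.

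First, Leibman's theory of polynomial sequences lets us put the polynomial sequence $g$ into a linear setting. There is a nilpotent Lie group $\tilde G$, a lattice $\tilde\Gamma$, an element $\tilde\alpha\in\tilde G$ and a continuous projection such that $g(n)\Gamma$ is the image of $\tilde\alpha^n\tilde\Gamma$. For instance, we can take the group of polynomial sequences modulo a suitable lattice, with $\tilde\alpha$ the shift. This costs us the step, which becomes some $s$ depending on the degrees and the step of $G$.

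Next, consider the space $\mathcal M(X)$ of probability measures on $X$ and the map $n\mapsto \mu_{g(n)Y}$. Since $H$ is a rational connected subgroup, the orbit $g(n)H\Gamma$ is a closed subnilmanifold. The conjugates $g(n)Hg(n)^{-1}$ range over a polynomial family of subgroups. We encode the pair consisting of the translate and the subgroup as a point in a nilmanifold $\Omega$, built as a fibred product or quotient of $\tilde G/\tilde\Gamma$. We then define
\[
B_f(\omega)=\int_{\omega Y} f\,d\mu_{\omega Y}-\int_{\omega Z} f\,d\mu_{\omega Z}.
\]
Using Leibman's orbit-closure lemma (Lemma~\ref{lem:leibman-orbit}), we replace $\Omega$ by the orbit closure of $x=\tilde\Gamma$, so that $(\Omega,S,\mu_\Omega)$ is ergodic and $b_f(n)=B_f(S^nx)$.

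Two properties of $B_f$ must then be checked.

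\textbf{Riemann integrability.} The map $\omega\mapsto\mu_{\omega Y}$ is continuous wherever the subnilmanifold $\omega Y$ has generic (maximal) dimension. The exceptional set, where $\omega Y$ collapses onto a smaller rational subnilmanifold, is a countable union of proper subnilmanifolds, hence Haar-null. We need the discontinuity set to be Haar-null, not just small. For this I would use Ratner/Shah-type equidistribution for unipotent-generated subnilmanifolds, in Leibman's quantitative form: the limit measure of $\mu_{\omega_k Y}$ is supported on a larger subnilmanifold exactly when $\omega$ lies in one of countably many rational algebraic subvarieties. The same argument applies to $Z$, and $B_f$ is bounded by $2\|f\|_\infty$.

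\textbf{Vanishing almost everywhere.} For $\mu_\Omega$-generic $\omega$, the subnilmanifold $\omega Y$ should already be invariant under the normal closure, so that $\mu_{\omega Y}=\mu_{\omega Z}$. The idea is that for generic $\omega$, the group generated by $H$ and its conjugates by the generic element $\omega$ is dense in $\langle\langle H\rangle\rangle$. By Leibman's criterion this holds off a countable union of proper rational subvarieties, which form a null set. The density hypothesis on $g(\Z)Y$ and the assumption $G=\langle G^0,g(\Z)\rangle$ are used here to guarantee that the generic conjugates generate all of $\langle\langle H\rangle\rangle$ rather than a proper normal subgroup.

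Finally, $b_f\in\GenNz_s$ follows directly from the definition.

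The main obstacle is the almost-everywhere vanishing. This requires identifying the generic orbit-closure dynamics of $\omega Y$ inside $\Omega$ and showing that the measure lifts to $Z$ with no dependence on the arithmetic of the particular orbit $S^nx$. I would first attempt this with Leibman's equidistribution of polynomial orbits in nilmanifolds applied fibrewise. The Riemann-integrability step is the second obstacle.
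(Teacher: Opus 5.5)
There is a genuine gap, and it lies in the definition of $B_f$. Your linearisation of $g$ and your passage to an orbit closure are fine and match the paper's auxiliary lemma.

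\textbf{Why your $B_f$ cannot vanish a.e.} At orbit points you need $\mu_{S^nx\,Y}=\mu_{g(n)Y}$, so $\omega Y$ must be an honest translate $cY$. The stabiliser of $cY$ has identity component $cHc^{-1}$, and $\mu_{cY}$ lives on a submanifold of dimension $\dim H$. Whenever $Y\neq Z$ (so $\dim H<\dim\langle\langle H\rangle\rangle$), this gives $\mu_{cY}\perp\mu_{cZ}$ for \emph{every} $c$. The fact that $H$ and a few generic conjugates generate a dense subgroup of $\langle\langle H\rangle\rangle$ says nothing about the invariance of a single translate. So almost-everywhere vanishing fails for any literal reading of $\omega Y$.

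Letting $\omega$ ``encode the subgroup'' does not rescue it. Write $g(n)=\beta_n\lambda_n$ with $\beta_n$ bounded and $\lambda_n\in\Lambda$. The relevant rational subgroup $\lambda_nH\lambda_n^{-1}$ has no reason to depend continuously on $S^nx$. Moreover, Haar measures of subnilmanifolds are not continuous in the subgroup. In the Heisenberg nilmanifold with $[A,B]=C$, the circles $\exp(nA)\exp(\R B)\Delta=\exp(\R(B+nC))\Delta$ equidistribute in the $2$-torus $\exp(\R B+\R C)\Delta/\Delta$. Yet the lines $\R(B+nC)$ converge to $\R C$, whose orbit is a single circle.

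Hence at a non-orbit point the only meaningful value of $B_f(\omega)$ is a limit of $b_f(n_j)$ along $S^{n_j}x\to\omega$. Showing that these limits vanish off a fixed null set \emph{is} the theorem. Your countable-union-of-subvarieties heuristic does not address this: what matters is how horizontal characters behave on the conjugated subgroups as $|n_j|\to\infty$, not where $\omega$ sits.

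\textbf{How the paper handles it.} It moves all the difficulty into Riemann integrability. An irrational circle rotation is added so that the iterates $\omega_n=S^n\omega_0$ are distinct. Then $B_f(\omega_n)=b_f(n)$ and $B_f=0$ off the orbit, so $B_f=0$ a.e.\ is automatic. Riemann integrability becomes equivalent to the existence of a null set $E$ with $b_f(n_j)\to0$ whenever $|n_j|\to\infty$ and $\omega_{n_j}\to\omega\notin E$. Proving this uses four ingredients absent from your plan.
\begin{itemize}
\item[(i)] Split $f=f_s+f_e$ with $f_s(x\Lambda)=\int_{xZ}f\,d\mu_{xZ}$, so that $b_f(n)=\int_{g(n)Y}f_e$. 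Combine this with Leibman's theorem that $\frac1N\sum_{n\le N}|b_f(n)|\to0$; this is where the density and generation hypotheses enter.
\item[(ii)] Take local analytic lifts $b$ of the analytic map $\eta:\Omega\to X$. This gives $\int_{g(n)Y}f_e=\int\Phi_{\omega_n}\,d\mu_{K_{n,\omega_n}}$, with $u\mapsto\Phi_u$ continuous and $K_{n,\omega_n}$ a lattice conjugate of the lifted $H$.
\item[(iii)] Let $J_U$ be the minimal rational subgroup containing all $K_{n,u}$. For each horizontal character $\chi$ of $J_U$, the map $n\mapsto\eta_\chi(\alpha_{n,u}(h))$ is a polynomial in $n$ whose coefficients are real analytic in $u$. $E$ is the union of the zero sets of the top coefficients, which is null because nonzero analytic functions have null zero sets. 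Off $E$, every nonzero $\chi$ is eventually nonzero on $K_{n_j,\omega_{n_j}}$, so Green--Tao gives $\mu_{K_{n_j,\omega_{n_j}}}\to\mu_{J_U}$.
\item[(iv)] The limit $u\mapsto\int\Phi_u\,d\mu_{J_U}$ is continuous. If it were nonzero at some $u_0\notin E$, then $|b_f(n)|>a/2$ for all large $n$ with $\omega_n$ near $u_0$. By unique ergodicity of $\Omega$ this contradicts the Ces\`aro vanishing in (i).
\end{itemize}
Step (iv) replaces your plan of showing that generic limits equal $\mu_{\omega Z}$ via ``generic conjugates generate $\langle\langle H\rangle\rangle$''. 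The paper never has to identify $J_U$ with the normal closure.
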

\subsection{Outline of the proof}
We outline the key aspects of the proofs, starting with Theorem~\ref{leibman:thm}. In Lemma~\ref{auxiliary:lemma} we identify a key auxiliary ergodic nilsystem $(\Omega,S)$, on which the generalized nilsequence will be defined. Lemma~\ref{control:lem} classifies exactly when a sequence lies in $\GenNz_s$ with respect to $\Omega$. It gives a dynamical criterion for belonging to $\GenNz_s$ that can be verified \emph{locally}. This paves the way to the main analytic argument, Theorem~\ref{control:thm}, which establishes this criterion. It relies on a simple fact about zeros of analytic functions (Lemma~\ref{zeros}) and on the quantitative equidistribution theorem of Green and Tao (Proposition~\ref{approximation:prop}).

Theorem~\ref{FK:thm} requires an additional argument in order to control the step $s$ of the error term. To do so, we construct $\Omega$ from the $\ell$-step factor $Z_{\ell}(X)$. Nilsystems of step at most $\ell$ are handled directly by Theorem~\ref{leibman:thm}. We then establish the result under two assumptions: (1) $X$ is connected, and (2) $X$ has step $\ell+1$. We approximate the functions $f_0,\dots,f_\ell$ by nilcharacters and use a product trick: $f_i(x)\overline{f_i(y)}$ is invariant under the diagonal action of $G_{\ell+1}$, so it descends to the quotient of $X\times X$ by this action, in which the image of the diagonal is a copy of $Z_\ell(X)$. We then obtain the claim by the same argument as in Theorem~\ref{leibman:thm}, this time with $s=\ell$.

The reduction to the connected case is a standard argument, relying on the fact that a nilsystem has finitely many connected components. The reduction from higher step to step $\ell+1$ follows from the degree lowering method of Frantzikinakis and Kuca \cite{FranKuca}, and requires a technical argument of passing from weak to norm convergence (Lemma~\ref{lem:weak-to-norm}).
\subsection*{Acknowledgments}
The author is supported by the Alon Fellowship and the Polak foundation.
\section{Preliminaries}
\subsection{The standard presentation and rationality}
As mentioned before, an ergodic nilsystem $G/\Gamma$ admits a presentation where $G=\langle G^0,\alpha\rangle$. We may further assume, by passing to the universal cover, that $G^0$ is simply connected and that the groups of the lower central series $G_j$ are all closed, connected and rational for $j\geq 2$. Under these assumptions, the $j$-th Host--Kra factor satisfy $Z_j(G/\Gamma) = G/G_{j+1}\Gamma$ (see \cite{hk-book}). 
\subsection{Analytic nilmanifolds}
Here we survey some definitions and basic properties of analytic nilmanifolds.
\begin{definition}[Real analytic function]\label{def:real-analytic}
    Let $m\geq 0$ and let $V\subseteq \mathbb{R}^m$ be an open domain. A function $f:V\rightarrow\C$ is called \emph{real analytic} if for every $v_0\in V$, there exist $r>0$ and coefficients $(c_b)_{b\in \N_0^m}\subseteq \C$ such that
    $$f(v) = \sum_{b\in \N_0^m} c_b(v-v_0)^b$$ with the convention that $(v-v_0)^b = \prod_{i=1}^m (v_i-v_{0,i})^{b_i}$ and the series converges absolutely for every $v\in V$ with $|v-v_0|<r$. More generally, we say that a map taking values in $\C^k$ is real analytic if each of its coordinates is real analytic.
\end{definition}
The following lemma is basic (see e.g., \cite{KrantzParks}).
\begin{lemma}[Basic properties of real analytic functions]
    Let $V\subseteq \R^m$ be an open domain.
    \begin{itemize}
        \item[(i)] Every real analytic function $f:V\rightarrow \C$ is smooth and the coefficients in Definition~\ref{def:real-analytic} are given by $c_b = \frac{\partial^b f}{b!}(v_0).$ Here, $\partial^b\coloneqq\frac{\partial^{b_1}}{\partial v_1^{b_1}}\cdots\frac{\partial^{b_m}}{\partial v_m^{b_m}}\quad\text{and}\quad b!\coloneqq b_1!\cdots b_m!$. 
        \item[(ii)] Sums, products and compositions (whenever they make sense) of real analytic functions are real analytic.
    \end{itemize}
\end{lemma}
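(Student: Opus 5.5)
The plan is to reduce everything to one-variable power series. Since the lemma is classical, the proof only needs to recall why the two properties hold, working locally around each point $v_0\in V$.

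For (i), fix $v_0$ and $r>0$ such that $f(v)=\sum_b c_b(v-v_0)^b$ converges absolutely on the ball $|v-v_0|<r$.

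First, absolute convergence at a point $v_1$ with all coordinates $|v_{1,i}-v_{0,i}|=\rho_i>0$ bounds the coefficients: $|c_b|\rho^b\le M$. The point $v_1$ can be chosen inside the ball with $\rho_i=\rho$ for all $i$, where $\rho<r/\sqrt m$.

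Second, on the smaller polydisc $|v_i-v_{0,i}|<\rho/2$, each formally differentiated series $\sum_b c_b\,\partial^{b'}(v-v_0)^b$ is dominated by $M\sum_b b^{b'}2^{-|b|}\rho^{-|b'|}$, which is finite. The Weierstrass M-test then allows term-by-term differentiation to all orders, so $f$ is $C^\infty$ near $v_0$. Since $v_0$ was arbitrary, $f$ is smooth on $V$.

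Third, evaluating $\partial^{b}f$ at $v_0$ kills every term except $c_b\,b!$, which gives $c_b=\partial^b f(v_0)/b!$.

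For (ii), sums are immediate, since absolutely convergent series add on the intersection of the two balls.

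Products follow from the Cauchy product of absolutely convergent multivariable series. The rearrangement is justified by absolute convergence, and the product is again a convergent power series near $v_0$.

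Composition $f\circ g$ is the main obstacle. Here $g:U\to V$ is real analytic with values in $\R^m$ (analytic in each coordinate), and $f$ is analytic near $w_0=g(u_0)$.

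1. Expand $f(w)=\sum_b c_b(w-w_0)^b$.
2. Substitute $w-w_0=g(u)-g(u_0)=\sum_{a\ne0}d_a(u-u_0)^a$, which has no constant term.
3. Use majorants: the series $\hat g(t)=\sum_{a\neq0}|d_a|t^{|a|}$ is finite for small $t$ and tends to $0$ as $t\to0$. So for $t$ small enough, $\hat g(t)$ lies inside the radius of absolute convergence of $\hat f(s)=\sum_b|c_b|s^{|b|}$.
4. The fully expanded double series is therefore absolutely convergent for $|u-u_0|$ small. Fubini for absolutely convergent series lets us regroup it by monomials $(u-u_0)^a$, producing a power series for $f\circ g$ near $u_0$.

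Complex values and vector-valued maps are handled coordinatewise, by treating real and imaginary parts or simply allowing complex coefficients throughout. A full treatment is in \cite{KrantzParks}.
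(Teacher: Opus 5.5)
Your proposal is correct. It is the standard power-series and majorant argument: term-by-term differentiation via the M-test for (i), and Cauchy products and Cauchy's method of majorants for (ii). This is the treatment in \cite{KrantzParks}, which is where the paper defers the proof instead of giving one.

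Two small tightenings are needed. First, your derivative bound is off by the constant factor $2^{|b'|}$, which is harmless. Second, in the composition step, take $t$ small enough that $\hat g(t)<\rho$, with $\rho<r/\sqrt{m}$ as in (i). This guarantees that $g(u)$ lands in the ball where the expansion of $f$ actually equals $f$, and not merely where $\hat f$ converges.
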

A manifold is locally homeomorphic to $\R^m$. A real analytic structure is a choice of such local homeomorphisms whose changes of coordinates are real analytic. Coordinate charts make this precise.
\begin{definition}[Coordinate charts and real analytic manifolds]
Let $M$ be a Hausdorff second countable topological space and let $m\geq 0$. An ($m$-dimensional) \emph{coordinate chart} on $M$ is a pair $(U,\phi)$ where $U\subseteq M$ is open and $\phi:U\rightarrow \phi(U)$ is a homeomorphism onto an open subset $\phi(U)\subseteq \R^m$. We refer to $U$ as the \emph{coordinate neighborhood}. We say that two charts $(U,\phi)$ and $(U',\phi')$ are analytically compatible if the transition map $$\phi'\circ \phi^{-1}:\phi(U\cap U')\rightarrow \phi'(U\cap U')$$ and its inverse are real analytic. A \emph{real analytic atlas} on $M$ is a family of pairwise analytically compatible charts whose domains cover $M$. Suppose that $W\subseteq M$ is open, we say that a function $f:W\rightarrow \C$ is \emph{real analytic} if $f\circ\phi^{-1}$ satisfies Definition~\ref{def:real-analytic} on $\phi(U\cap W)$ for every chart $(U,\phi)$ of $M$. It suffices to check the latter for a single atlas, see e.g., \cite[Chapter~I]{Hel78}. Every real analytic atlas is contained in a unique maximal one, called a \emph{real analytic structure}. $M$ together with a real analytic structure is called an ($m$-dimensional) \emph{real analytic manifold}, and by a chart of $M$ we mean a chart of its maximal atlas.
\end{definition}
Next we define relationships between analytic manifolds.
\begin{definition}
    Let $M$ and $M'$ be real analytic manifolds of dimensions $m$ and $m'$, respectively. 
    \begin{itemize}
        \item[(1)] A continuous map $F:M\rightarrow M'$ is \emph{analytic} if $\phi'\circ F\circ \phi^{-1}$ is real analytic on $\phi(U\cap F^{-1}(U'))$ for all charts $(U,\phi)$ of $M$ and $(U',\phi')$ of $M'$. An \emph{analytic diffeomorphism} is an analytic bijection whose inverse is analytic. A \emph{local analytic diffeomorphism} is an analytic map $F$ such that every $x\in M$ admits an open neighborhood $U$ for which $F(U)$ is open and $F|_U:U\rightarrow F(U)$ is an analytic diffeomorphism.
        \item[(2)] A subset $Y\subseteq M$ is called an \emph{embedded analytic submanifold of dimension $k$} if for every $y\in Y$ there exists a chart $(U,\phi)$ of $M$ with $y\in U$ and $$\phi(U\cap Y) = \phi(U)\cap \bigl(\R^k\times\{0^{m-k}\}\bigr).$$ In this case, the maps $\mathrm{p}_k\circ\phi|_{U\cap Y}$, where $\mathrm{p}_k:\R^m\rightarrow\R^k$ is the projection onto the first $k$ coordinates, form a real analytic atlas on $Y$ (with the subspace topology), and we always equip $Y$ with this structure.
    \end{itemize}
\end{definition}
We are specifically interested in manifolds arising as a quotient of a Lie group by a lattice. These are always analytic.
\begin{proposition}[Analytic structure of Lie groups and their quotients]\label{lieisanalytic:prop}
    Let $G$ be a Lie group with countably many connected components (e.g., if $G/G^0$ is finitely generated). Then,
    \begin{itemize}
        \item[(i)] $G$ admits a unique real analytic structure, compatible with its topology, for which multiplication and inversion are analytic. Moreover, with respect to these structures every continuous homomorphism between Lie groups is analytic.
        \item[(ii)] (Closed subgroup theorem) Every closed subgroup $H\leq G$ is an embedded analytic submanifold of $G$. In particular, it is a Lie group.
        \item[(iii)] If $\Gamma\leq G$ is a discrete subgroup, then $G/\Gamma$ also admit a unique real analytic structure for which the quotient map $\pi:G\rightarrow G/\Gamma$ is a local analytic diffeomorphism. 
    \end{itemize}
\end{proposition}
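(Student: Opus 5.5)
The proposition has three parts, and I would prove each by reducing to the corresponding classical fact for connected Lie groups and then propagating analyticity by translations.

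For (i), I would start with the identity component $G^0$, which is a connected Lie group. By the classical theorem (Hilbert's fifth problem in the smooth setting, or more simply the fact that the exponential map is a local diffeomorphism whose chart changes are given by the Baker--Campbell--Hausdorff series), $G^0$ carries a real analytic structure in which multiplication and inversion are analytic. The charts are of the form $v\mapsto g\exp(v)$ on a small ball of $\mathfrak g$.
\begin{itemize}
\item \emph{Extension to $G$.} Since $G$ has countably many components, each a left translate $gG^0$, I would transport the atlas to each coset by $x\mapsto gx$. The result is second countable, and multiplication stays analytic because conjugation by $g$ is an automorphism of $G^0$, which is analytic as a continuous homomorphism.
\item \emph{Analyticity of continuous homomorphisms.} A continuous homomorphism $\phi$ between Lie groups is smooth (Cartan). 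Hence $\phi(\exp v)=\exp(d\phi\, v)$ with $d\phi$ linear, so in exponential charts $\phi$ is linear composed with analytic maps.
\item \emph{Uniqueness.} Apply the previous point to the identity map between two candidate structures: it is an analytic homeomorphism with analytic inverse, so the two structures coincide.
\end{itemize}

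For (ii), Cartan's closed subgroup theorem gives that $H$ is an embedded smooth submanifold with Lie algebra $\mathfrak h=\{v:\exp(tv)\in H\ \forall t\}$. Next I would choose a complement $\mathfrak m$ with $\mathfrak g=\mathfrak m\oplus\mathfrak h$ and use the chart $(u,w)\mapsto \exp(u)\exp(w)$. This map is analytic, and its differential at $0$ is invertible, so by the analytic inverse function theorem it is a local analytic diffeomorphism near $0$. The standard argument shows that for a small neighborhood $U$, $U\cap H$ corresponds exactly to $\{u=0\}$. Translating by $h\in H$ flattens $H$ near every point, giving the required slice charts.

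For (iii), the step needing most care is Hausdorffness together with the compatibility of charts. Since $\Gamma$ is discrete, I would pick a neighborhood $V$ of $1_G$ with $V^{-1}V\cap\Gamma=\{1\}$. Then $\pi$ restricted to $gV$ is injective, open and a homeomorphism onto its image, and I would declare charts on $G/\Gamma$ to be $\phi\circ(\pi|_{gV})^{-1}$. On overlaps, the transition between two such charts is locally given by right multiplication by a fixed element $\gamma\in\Gamma$. That element is locally constant by discreteness, and right multiplication is analytic, so the transitions are analytic. With this atlas $\pi$ is a local analytic diffeomorphism by construction. Uniqueness follows since any structure with this property must have the maps $(\pi|_{gV})^{-1}$ analytic, hence agrees with the one constructed.
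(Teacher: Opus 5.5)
Your proposal is correct and follows the paper's route. For (iii) the paper uses exactly your atlas of local inverses of the covering map $\pi$ composed with charts of $G$, with transition maps that are locally right translations by a fixed element of $\Gamma$; for (i) and (ii) it simply cites Helgason, whose standard arguments (exponential charts, analyticity of continuous homomorphisms, the slice chart $(u,w)\mapsto\exp(u)\exp(w)$) you reconstruct, and the Hausdorffness of $G/\Gamma$ that you flag but never check is immediate because a discrete subgroup of a Hausdorff topological group is closed.
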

\begin{proof}
    See \cite[Chapter~II, \S\S2 and~4]{Hel78}. For (iii), note that $\Gamma$ acts freely and properly discontinuously on $G$ by right translations, so $\pi$ is a covering map, and the local inverses of $\pi$, composed with charts of $G$, form an analytic atlas on $G/\Gamma$ (the transition maps are locally right translations by elements of $\Gamma$).
\end{proof}
\subsubsection{Lifts}
\begin{definition}
    Let $p:\tilde{Y}\rightarrow Y$ be a covering map between topological spaces. Let $f:Z\rightarrow Y$ a continuous map. A \emph{lift} of $f$ (with respect to $p$) is a continuous map $\tilde{f}:Z\rightarrow \tilde{Y}$ such that $p\circ \tilde{f}=f$.
\end{definition}
The following lemma will be useful.
\begin{lemma}[Analytic lifts]\label{analyticlift:lem}
Let $p:\tilde{Y}\rightarrow Y$ be a covering map between real analytic manifolds which is also a local analytic diffeomorphism. Let $Z$ be a connected and simply connected real analytic manifold and let $f:Z\rightarrow Y$ analytic. Then for every $z_0\in Z$ and every $\tilde{y}_0\in p^{-1}(f(z_0))$, there exists a unique lift $\tilde{f}$ of $f$ with $\tilde{f}$ analytic and $\tilde{f}(z_0)=\tilde{y_0}$. 
\end{lemma}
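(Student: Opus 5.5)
The plan is to first build a continuous lift using classical covering space theory, and then show that this lift is analytic and unique. Throughout, $Z$ is connected and simply connected, and therefore path connected and locally path connected, since it is a manifold. By the lifting criterion for covering maps, a continuous map $f:Z\rightarrow Y$ admits a lift $\tilde f:Z\rightarrow\tilde Y$ with $\tilde f(z_0)=\tilde y_0$. The obstruction is the condition $f_*\pi_1(Z,z_0)\subseteq p_*\pi_1(\tilde Y,\tilde y_0)$, and it holds trivially because $\pi_1(Z,z_0)$ is trivial. Uniqueness of continuous lifts agreeing at one point follows from the standard unique lifting property: the set where two lifts agree is open, because $p$ is a local homeomorphism and so is injective on evenly covered sheets. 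That set is also closed, because $\tilde Y$ is Hausdorff. Since $Z$ is connected and the set is nonempty, it is all of $Z$. This gives both existence and uniqueness among continuous lifts, and hence uniqueness among analytic lifts.

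It then remains to show that this continuous lift is analytic. Analyticity is a local property, so I fix $z\in Z$. Since $p$ is a local analytic diffeomorphism, there is an open neighborhood $\tilde U$ of $\tilde f(z)$ such that $p(\tilde U)$ is open and $p|_{\tilde U}:\tilde U\rightarrow p(\tilde U)$ is an analytic diffeomorphism. Let $s$ denote its analytic inverse. By continuity of $\tilde f$, there is an open neighborhood $W$ of $z$ with $\tilde f(W)\subseteq\tilde U$. On $W$, the relation $p\circ\tilde f=f$ together with injectivity of $p$ on $\tilde U$ gives
$$\tilde f|_W = s\circ f|_W .$$
This is a composition of analytic maps, so it is analytic. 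To make this precise, I would write it in charts: for charts $(V,\phi)$ of $Z$ and $(V',\phi')$ of $\tilde Y$, the coordinate expression $\phi'\circ\tilde f\circ\phi^{-1}$ equals $(\phi'\circ s\circ\psi^{-1})\circ(\psi\circ f\circ\phi^{-1})$ for a chart $(V'',\psi)$ of $Y$ around $f(z)$. Each factor is real analytic, and compositions of real analytic functions are real analytic by the basic properties of real analytic functions stated above. Since $z$ was arbitrary, $\tilde f$ is analytic.

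I expect no serious obstacle in this argument. The only point needing care is the existence step, which uses the topological lifting criterion and therefore requires $Z$ to be locally path connected. This is automatic for manifolds. If the paper's covering maps are not assumed to have a path connected total space, that causes no problem, since the lifting criterion only needs the hypotheses on $Z$.
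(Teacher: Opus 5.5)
Your proposal is correct and follows essentially the same route as the paper. The paper takes existence and uniqueness directly from the topological lifting criterion and the unique lifting property (citing Hatcher, Propositions~1.33 and~1.34), then gets analyticity from the local identity $\tilde f=(p|_{U})^{-1}\circ f$ on the open set $\tilde f^{-1}(U)$; your agreement-set argument and chart computation just spell out details the paper leaves to the reference.
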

\begin{proof}
    The existence and uniqueness is the unique lifting property (See e.g., \cite[Propositions~1.33 and~1.34]{Hatcher}). To prove that $\tilde{f}$ is analytic, let $z\in Z$ and choose some open neighborhood $U$ of $\tilde{f}(z)$ such that $p|_U:U\rightarrow p(U)$ is an analytic diffeomorphism. Then $\tilde{f} = (p|_U)^{-1}\circ f$ on the open set $z\in \tilde{f}^{-1}(U)$ is analytic.
\end{proof}

\subsection{Corollaries of Leibman's work}
We use intensively the following results from Leibman's work \cite{Leibmanpointwise}. The first asserts that polynomial sequences can be realized as orbits of a unipotent automorphism on a larger nilmanifold.
\begin{lemma}[Realizing a polynomial sequence as an automorphism on a larger nilmanifold]\label{Leibman:realization}
    Let $N$ be a nilpotent Lie group, let $N^0$ denote the connected component of the identity and suppose that $N/N^0$ is finitely generated. Let $\Lambda$ be a lattice for $N$ and $g:\Z\rightarrow N$ be a polynomial sequence. There exist a nilpotent Lie group $\tilde{N}$ of (possibly higher) finite step, such that $\tilde{N}/\tilde{N}^0$ is finitely generated, a lattice $\tilde{\Lambda}$, a unipotent automorphism $\tau$ with $\tau(\tilde{\Lambda})=\tilde{\Lambda}$, a smooth surjective homomorphism $\eta:\tilde{N}\rightarrow N$, mapping $\eta(\tilde{\Lambda})\subseteq \Lambda$ and some $c\in \tilde{N}$ such that $g(n)=\eta(\tau^n(c))$. 
\end{lemma}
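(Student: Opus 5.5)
The plan is to realize the polynomial sequence as a point in a finite-dimensional group of polynomial sequences, on which the shift acts as a unipotent automorphism and evaluation at $0$ serves as $\eta$. This is the approach I would take, in the spirit of Leibman's group of polynomial mappings and the Lazard--Leibman theorem.

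\textbf{Step 1: the filtration.} Let $D$ be the maximal degree of $p_1,\dots,p_\ell$ and let $s$ be the step of $N$. Define a filtration $N=N_{(0)}=N_{(1)}\supseteq N_{(2)}\supseteq\dots$ by $N_{(t)}\coloneqq N_{\lceil t/D\rceil}$, where $N_j$ is the lower central series. This is a filtration: $[N_{(t)},N_{(u)}]\subseteq N_{(t+u)}$, since $\lceil t/D\rceil+\lceil u/D\rceil\geq \lceil (t+u)/D\rceil$. It terminates after $sD$ steps. Each $a_i^{p_i(n)}$ is a polynomial sequence adapted to this filtration, because $p_i$ is integer valued of degree at most $D$.

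\textbf{Step 2: the group $\tilde N$ and the maps.} Let $\tilde N\coloneqq \mathrm{poly}(\Z,N_{(\bullet)})$ be the set of polynomial sequences adapted to the filtration. By the Lazard--Leibman theorem, $\tilde N$ is a group under pointwise multiplication. Via Taylor coefficients with respect to the binomial basis $\binom{n}{j}$, it embeds as a closed subset of a finite product $\prod_{j\le sD}N_{(j)}$. This makes $\tilde N$ a nilpotent Lie group of step at most $s$ (pointwise commutators stay in $N_j$), and $\tilde N/\tilde N^0$ is finitely generated since the same holds for each $N_{(j)}$. The required objects are then:
\begin{itemize}
\item[(a)] $\tilde\Lambda\coloneqq \mathrm{poly}(\Z,\Lambda\cap N_{(\bullet)})$;
\item[(b)] $\tau$ the shift, $(\tau h)(n)=h(n+1)$;
\item[(c)] $\eta(h)\coloneqq h(0)$, evaluation at $0$;
\item[(d)] $c\coloneqq g$.
\end{itemize}
With these choices:
\begin{itemize}
\item[$\bullet$] $\tau$ is an automorphism (the shift of an adapted polynomial sequence is again adapted) and clearly preserves $\tilde\Lambda$.
\item[$\bullet$] $\eta$ is a smooth homomorphism, surjective because constant sequences are adapted, and $\eta(\tilde\Lambda)\subseteq\Lambda$.
\item[$\bullet$] $\eta(\tau^n c)=g(n)$ holds by definition.
\end{itemize}
Unipotence of $\tau$ comes from the discrete derivative: $\partial h(n)=h(n+1)h(n)^{-1}$ maps sequences adapted to $N_{(\bullet)}$ into sequences adapted to the shifted filtration $N_{(\bullet+1)}$. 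Hence $\tau$ acts trivially on the successive quotients of a finite $\tau$-invariant filtration of $\tilde N$. Equivalently, $\mathrm{Ad}$ of $\tau$ on the Lie algebra is $\mathrm{id}$ plus a nilpotent map.

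\textbf{Step 3: $\tilde\Lambda$ is a lattice.} This is the main obstacle. Discreteness is immediate. For cocompactness, I would first reduce to the case where the $N_j\cap\Lambda$ are cocompact in $N_j$ (rationality of the lower central series). This should be automatic when $N/N^0$ is finitely generated and $\Lambda$ is a lattice, possibly after passing to a finite-index modification as in the standard presentation. The expected subtlety is non-connectedness: integer-valued binomial coefficients $\binom nj$ force the Taylor coefficients of elements of $\tilde\Lambda$ into $\Lambda$, but one must check that $\tilde N$ is not larger than what $\tilde\Lambda$ can cover.

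I would then argue by induction on the filtration degree, using the Taylor coordinates $h(n)=\prod_j h_j^{\binom nj}$ with $h_j\in N_{(j)}$. Given $h\in\tilde N$, multiply on the right by elements of $\tilde\Lambda$, one coefficient at a time, to move each $h_j$ into a fixed compact fundamental domain of $\Lambda\cap N_{(j)}$ in $N_{(j)}$. The correction terms generated by commutators land in higher filtration levels, so the induction closes. This yields a compact set surjecting onto $\tilde N/\tilde\Lambda$.
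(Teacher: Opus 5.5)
The paper gives no argument of its own here; it simply cites \cite[Proposition~3.14]{Leibmanpointwise}. Your proposal is a genuinely self-contained route, and its verifications go through. You take $\tilde N=\mathrm{poly}(\Z,N_{(\bullet)})$ with the shift $\tau$, evaluation at $0$ as $\eta$, $\Lambda$-valued sequences as $\tilde\Lambda$, and $c=g$. Points worth confirming:
\begin{itemize}
\item The Taylor coordinates are a homeomorphism onto all of $\prod_{j\le sD}N_{(j)}$, not merely a closed subset.
\item Pointwise commutators show that $\tilde N$ has step at most $s$.
\item Since $\tau(h)h^{-1}=\partial h$ raises the level of the filtration $\mathrm{poly}(\Z,N_{(\bullet+t)})$, the semidirect product $\tilde N\rtimes_\tau\Z$ is nilpotent. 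This is what Lemma~\ref{auxiliary:lemma} actually uses.
\item Your cocompactness induction is correct. Right multiplication by $\lambda^{\binom nj}$ with $\lambda\in\Lambda\cap N_{(j)}$ leaves the Taylor coordinates of index $<j$ unchanged and replaces $h_j$ by $h_j\lambda$.
\end{itemize}
What your route buys is explicit objects, and the observation that $\tilde N$ need not have larger step than $N$; the extra step appears only in $\tilde N\rtimes_\tau\Z$. What the citation buys is that the structural input is handled inside Leibman's framework.

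That input is exactly what you left as ``should be automatic'': each $N_j$ is closed in $N$, and $N_j\cap\Lambda$ is cocompact in $N_j$. You need this three times:
\begin{itemize}
\item for $\tilde N\cong\prod_j N_{(j)}$ to be a Lie group;
\item for $\tilde N/\tilde N^0$ to be finitely generated, which needs $N_j/N_j^0$ finitely generated;
\item for $\tilde\Lambda$ to be cocompact.
\end{itemize}
For disconnected $N$ this is true but needs an argument. One route:
\begin{itemize}
\item $N^0\Lambda$ is open, so $N/N^0\Lambda$ is compact and discrete, hence finite.
\item Conjugation by elements of $\Lambda$ preserves $\Lambda\cap N^0$, so it acts rationally on $N^0$. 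Hence the identity component of each $N_j$ is a rational subgroup (after reducing to $N^0$ simply connected).
\item The remaining discrete part of $N_j$ is controlled inside the finitely generated nilpotent group $N/N^0$, in which $\Lambda N^0/N^0$ has finite index.
\end{itemize}
Finally, your hedge ``after passing to a finite-index modification'' should be restricted. Shrinking $\Lambda$ to a finite-index sublattice is harmless, since only $\eta(\tilde\Lambda)\subseteq\Lambda$ is required. Modifying $N$ itself is not allowed, because $\eta$ must map onto the given $N$.
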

\begin{proof}
    \cite[Proposition 3.14]{Leibmanpointwise}.
\end{proof}

The second result is an equidistribution result in the spirit of Ratner's theorems for nilmanifolds.
\begin{lemma}[Orbit closures of nilrotations {\cite{Leibmanpointwise}}]\label{lem:leibman-orbit}
Let $N$ be a nilpotent Lie group of step $r$, let $\Lambda \leq N$ be a lattice, and let $X=N/\Lambda$. For every $s\in N$ and $x=n\Lambda\in X$, write  $$\mathcal{O}(x) = \overline{\{s^n x : n\in \Z\}}$$ for the orbit closure of $x$ under the action of $s$ and let $H_x:=\{h\in N : h\mathcal{O}(x)=\mathcal{O}(x)\},$ denote the subgroup of $N$ of all the translations which preserve the orbit closure. Then the following hold:
\begin{itemize}
    \item[(1)] $H_x$ is closed in $N$, contains $s$, and satisfies that $\mathcal{O}(x) = H_xx$. The stabilizer $\Lambda_x:=H_x\cap n\Lambda  n^{-1} $ of $x$ in $H_x$ is discrete and co-compact in $H_x$.
    \item[(2)] $\mathcal{O}(x)$ is a compact real-analytic submanifold of $X$. The map
    \begin{align*}\imath&:H_x/\Lambda_x\rightarrow \mathcal{O}(x)\\ \imath&(h\Lambda_x)=hx
    \end{align*} is an analytic diffeomorphism of $H_x/\Lambda_x$ onto $\mathcal{O}(x)$. 
\end{itemize}
\end{lemma}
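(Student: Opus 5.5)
The plan is to take the algebraic content from Leibman's theorem in \cite{Leibmanpointwise} and derive the remaining topological and analytic claims from Proposition~\ref{lieisanalytic:prop}. Leibman's theorem says that the orbit closure $\mathcal{O}(x)$ of $x=n\Lambda$ under $s$ is a sub-nilmanifold. Concretely, there is a closed subgroup $H\leq N$ with $s\in H$ such that $\mathcal{O}(x)=Hx$ and $H\cap n\Lambda n^{-1}$ is co-compact in $H$.

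\textbf{Step 1: properties of $H_x$.} First, $H_x$ is closed. Indeed, if $h_j\to h$ with $h_j\mathcal{O}(x)=\mathcal{O}(x)$, then $h\mathcal{O}(x)\subseteq\mathcal{O}(x)$ because $\mathcal{O}(x)$ is compact and the action is continuous. The same argument applied to $h_j^{-1}\to h^{-1}$ gives the reverse inclusion. Next, $s\in H_x$, since $s$ maps the orbit $\{s^nx\}$ bijectively onto itself, hence preserves its closure. Since $H\mathcal{O}(x)=HHx=Hx=\mathcal{O}(x)$, we get $H\subseteq H_x$. Therefore
\[
\mathcal{O}(x)=Hx\subseteq H_xx\subseteq\mathcal{O}(x),
\]
which gives $\mathcal{O}(x)=H_xx$. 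Finally, $\Lambda_x=H_x\cap n\Lambda n^{-1}$ is the stabilizer of $x$ in $H_x$. It is discrete because $n\Lambda n^{-1}$ is discrete in $N$.

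\textbf{Step 2: co-compactness of $\Lambda_x$.} The orbit map $H_x/\Lambda_x\to\mathcal{O}(x)$, $h\Lambda_x\mapsto hx$, is a continuous bijection. Its image $H_xx=\mathcal{O}(x)$ is closed in $X$, hence locally compact. Since $H_x$ is a $\sigma$-compact locally compact group acting transitively on a locally compact Hausdorff space, the standard Baire category (open mapping) argument shows that this map is a homeomorphism. Hence $H_x/\Lambda_x$ is homeomorphic to the compact space $\mathcal{O}(x)$, so $\Lambda_x$ is co-compact in $H_x$. This completes (1).

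\textbf{Step 3: analyticity, part (2).} By Proposition~\ref{lieisanalytic:prop}(ii), $H_x$ is an embedded analytic Lie subgroup of $N$. By part (iii) of the same proposition, $H_x/\Lambda_x$ is an analytic manifold and the quotient map is a local analytic diffeomorphism. The map $h\mapsto \pi(hn)$ is analytic on $H_x$, as a composition of the analytic inclusion, right multiplication by $n$, and $\pi$. It is constant on $\Lambda_x$-cosets, so it descends to an analytic map $\imath$ on $H_x/\Lambda_x$. To see that $\imath$ is an immersion, use $H_x$-equivariance: it suffices to check injectivity of the differential at the identity coset. The kernel of that differential would be the Lie algebra of the stabilizer of $x$, and this stabilizer is discrete, so the kernel is trivial. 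Thus $\imath$ is an injective immersion from a compact manifold, hence an embedding. Its image $\mathcal{O}(x)$ is therefore an embedded analytic submanifold of $X$. Applying the analytic inverse function theorem in charts shows that $\imath^{-1}$ is analytic, so $\imath$ is an analytic diffeomorphism.

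The main obstacle is Step 2 together with the embedded-submanifold claim in Step 3. There one must pass from the purely topological fact that $\mathcal{O}(x)$ is a closed orbit to a genuine homeomorphism, and then to an analytic embedding. I would handle it with the open-mapping theorem for transitive actions of $\sigma$-compact groups, followed by the immersion-plus-compactness argument above.
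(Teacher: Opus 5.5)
Your proof is correct. Part (1) follows the paper's argument essentially verbatim: $H_x$ is closed, $s\in H_x$, and $\mathcal{O}(x)=H_xx$ follows from the closed subgroup supplied by Leibman's theorem. The one difference is that the paper cites \cite{Leibmanpointwise} for the fact that $\imath$ is a homeomorphism, and hence that $\Lambda_x$ is co-compact. You instead prove this with the Baire/open-mapping argument for transitive actions. That argument needs $H_x$ to be $\sigma$-compact, which holds under the standing assumption that $N$ has countably many components (also needed for Proposition~\ref{lieisanalytic:prop}).

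Part (2) takes a genuinely different route.
\begin{itemize}
\item The paper computes no differential. It shows that $\pi^{-1}(\mathcal{O}(x))=H_x\Lambda$ is a locally finite disjoint union of the translates $H_x\lambda$. This uses $H_x=K\Lambda_x$ with $K$ compact, together with the discreteness of $\Lambda$. Hence $H_x\Lambda$ is an embedded analytic submanifold of $N$. The paper then pushes this down through local analytic inverses of the covering map $\pi$. This shows in one stroke that $\mathcal{O}(x)$ is embedded and that $\pi|_{H_x}$, and therefore $\imath$, is a local analytic diffeomorphism.
\item You instead show that $\imath$ has injective differential, by equivariance and discreteness of the stabilizer. (It is even quicker to note that $g\mapsto gx$ is already a local diffeomorphism $N\to X$.) You then use that an injective immersion of a compact manifold is an embedding. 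Finally, you invoke the analytic immersion and inverse function theorems to get the analytic slice charts and analyticity of $\imath^{-1}$.
\end{itemize}
Your route is the standard Lie-theoretic one and is shorter, but it relies on the analytic constant-rank theorem. The paper's route needs only the closed subgroup theorem and the covering structure of $\pi$. In exchange, it must use the co-compactness of $\Lambda_x$ from part (1) to get local finiteness. You use that co-compactness only through the compactness, or the homeomorphism property, of $H_x/\Lambda_x\to\mathcal{O}(x)$.
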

\begin{proof}
First, we may assume without loss of generality that $x=e\Lambda$ is the base point. Indeed, if $x=n\Lambda$, the map $h\Lambda\mapsto hn^{-1}(n\Lambda n^{-1})$ is an $N$-equivariant analytic diffeomorphism from $N/\Lambda$ onto $N/n\Lambda n^{-1}$ sending $x$ to the base point, so we may replace $\Lambda$ with $n\Lambda n^{-1}$. In particular, $\Lambda_x=H_x\cap\Lambda$.

To prove $(1)$, observe first that $H_x$ is a subgroup by definition, and it is closed since
$$
H_x=\bigcap_{y\in\mathcal{O}(x)}\{h\in N:\ hy\in\mathcal{O}(x)\text{ and }h^{-1}y\in\mathcal{O}(x)\},
$$
and each set in the intersection is closed because $\mathcal{O}(x)$ is closed and the action is continuous. The fact that $s\in H_x$ is immediate. By \cite[Theorem~2.21]{Leibmanpointwise}, there is a closed subgroup $E\subseteq N$ with $\mathcal{O}(x)=Ex$. Clearly $E\subseteq H_x$, and therefore $\mathcal{O}(x)=H_xx$. Since $H_xx$ is closed, the discrete subgroup $\Lambda_x$ is cocompact in $H_x$, and $\iota$ is a homeomorphism by \cite[Section~2.2]{Leibmanpointwise}. The equivariance of $\iota$ is immediate.

To prove $(2)$, recall that by the closed subgroup theorem (see e.g.\ \cite[Ch.~II, Thm.~2.3]{Hel78}), $H_x$ is an embedded real analytic submanifold of $N$, and hence so is $H_x\lambda$ for every $\lambda\in\Lambda$. Since $\Lambda$ and $\Lambda_x$ are discrete, the projections $\pi\colon N\to X$ and $\pi_x\colon H_x\to H_x/\Lambda_x$ are local analytic diffeomorphisms.

We first show that $H_x\Lambda=\pi^{-1}(\mathcal{O}(x))$ is an embedded analytic submanifold of $N$. Since $H_x/\Lambda_x$ is compact, $H_x=K\Lambda_x$ for some compact $K\subseteq H_x$. Let $p\in H_x\lambda_0$ and let $C$ be a compact neighborhood of $p$. If $h\lambda\in C$ with $h=k\lambda'$, $k\in K$, $\lambda'\in\Lambda_x$, then $\lambda'\lambda\in\Lambda\cap K^{-1}C$, which is a finite set since $\Lambda$ is discrete. As $H_x\lambda=H_x\lambda'\lambda$, only finitely many cosets $H_x\lambda$ meet $C$. Distinct cosets are disjoint and closed, so $p$ has an open neighborhood $O$ with $O\cap H_x\Lambda=O\cap H_x\lambda_0$. Hence $H_x\Lambda$ is an embedded analytic submanifold of $N$.

Now let $p\in H_x\Lambda$, and let $V\ni p$ be open such that $\pi|_V$ is an analytic diffeomorphism onto the open set $\pi(V)$. Since $\pi^{-1}(\mathcal{O}(x))=H_x\Lambda$, we have $\mathcal{O}(x)\cap\pi(V)=\pi(V\cap H_x\Lambda)$, so $\mathcal{O}(x)$ is an embedded analytic submanifold of $X$ near $\pi(p)$. If moreover $p\in H_x$ and $V\subseteq O$, then $V\cap H_x\Lambda=V\cap H_x$, so $\pi$ restricts to an analytic diffeomorphism from $V\cap H_x$ onto the open subset $\mathcal{O}(x)\cap\pi(V)$ of $\mathcal{O}(x)$. Hence $\pi|_{H_x}\colon H_x\to\mathcal{O}(x)$ is a local analytic diffeomorphism. Since $\iota\circ\pi_x=\pi|_{H_x}$ and $\pi_x$ is a local analytic diffeomorphism, the bijection $\iota$ is a local analytic diffeomorphism and hence an analytic diffeomorphism.
\end{proof}
As a corollary, we get the following lemma. This lemma introduces an auxiliary nilsystem $\Omega$ which will be used throughout the paper.
\begin{lemma}\label{auxiliary:lemma}
    Let $X=N/\Lambda$ be a connected nilmanifold and $g:\Z\rightarrow N$ a polynomial. There exist an ergodic nilsystem $(\Omega,\mu_\Omega,S)$, a point $\omega_0$ with distinct iterates $\omega_n=S^n\omega_0$ and an analytic map $\eta:\Omega\rightarrow X$ such that \begin{equation}\label{etag}\eta(S^n\omega_0) = g(n)\Lambda
    \end{equation} for all $n\in \Z$. When  $g$ is linear, i.e., $g(n)=a^n$ for some $a\in N$, $\Omega$ can be taken of the same step as $N$.
\end{lemma}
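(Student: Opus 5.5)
We will combine Lemma~\ref{Leibman:realization} with Lemma~\ref{lem:leibman-orbit}. First apply Lemma~\ref{Leibman:realization} to the polynomial sequence $g$. This produces a nilpotent Lie group $\tilde N$ with lattice $\tilde\Lambda$, a unipotent automorphism $\tau$ preserving $\tilde\Lambda$, a surjective homomorphism $\eta_0:\tilde N\to N$ with $\eta_0(\tilde\Lambda)\subseteq\Lambda$, and a point $c\in\tilde N$ with $g(n)=\eta_0(\tau^n(c))$. Since $\tau$ is unipotent, we pass to the semidirect product $\hat N=\tilde N\rtimes_\tau\Z$, which is again nilpotent. Inside it $\tau$ acts by conjugation by the generator $t$ of $\Z$, and $\hat\Lambda=\tilde\Lambda\rtimes\Z$ is a lattice with $\hat N/\hat\Lambda\cong\tilde N/\tilde\Lambda$. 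Since $\tau(\tilde\Lambda)=\tilde\Lambda$, the orbit $\tau^n(c)\tilde\Lambda$ equals $t^n c\, t^{-n}\tilde\Lambda$, which corresponds to $t^n c\,\hat\Lambda$ in $\hat N/\hat\Lambda$. So $\tau^n(c)\tilde\Lambda$ is the orbit of the point $c\hat\Lambda$ under the nilrotation by $t$. The map $\tilde N/\tilde\Lambda\to N/\Lambda$, $x\tilde\Lambda\mapsto\eta_0(x)\Lambda$, is well defined because $\eta_0(\tilde\Lambda)\subseteq\Lambda$. It is analytic by Proposition~\ref{lieisanalytic:prop}(i),(iii), since locally it is $\pi\circ\eta_0$ composed with a local inverse of the quotient map. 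Call this map $\bar\eta$. Then $\bar\eta(t^n c\hat\Lambda)=g(n)\Lambda$.

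Next we take $\Omega=\mathcal O(c\hat\Lambda)$, the orbit closure under $S=R_t$. By Lemma~\ref{lem:leibman-orbit}, $\Omega\cong H_x/\Lambda_x$ is a compact analytic submanifold and a nilmanifold on which $t\in H_x$ acts. The orbit of $\omega_0=c\hat\Lambda$ is dense, so the system is minimal, hence ergodic and uniquely ergodic with Haar measure $\mu_\Omega$. Define $\eta=\bar\eta|_\Omega$. It is analytic because $\Omega$ is an embedded analytic submanifold, and \eqref{etag} holds by construction.

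The iterates $\omega_n$ need not be distinct: the orbit could be periodic, for instance when $g$ is constant. In that case we replace $\Omega$ by $\Omega\times\T$ with $S\times R_\beta$ for an irrational $\beta$ rationally independent of the relevant eigenvalues, and compose $\eta$ with the projection. An orbit closure under the product rotation is again a nilsystem by Lemma~\ref{lem:leibman-orbit}. If we choose $\beta$ generic with respect to the countably many eigenvalues of $\Omega$, this product is ergodic, and the iterates are distinct because of the circle coordinate. This needs a little care: if $\Omega$ is finite we simply take $\Omega\times\T$, whose step is unchanged for step $\geq1$.

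For the linear case $g(n)=a^n$, we skip Lemma~\ref{Leibman:realization} entirely. Take $\Omega=\mathcal O(e\Lambda)\subseteq N/\Lambda$ under $R_a$, with $\eta$ the inclusion, which is analytic by Lemma~\ref{lem:leibman-orbit}(2). Here $H_x\le N$, so the step is at most that of $N$, and we may regard it as having the same step. The only point needing care is the distinctness fix: adjoining $\T$ keeps the step, or one can observe that the orbit is infinite unless it is finite, in which case $\Omega\times\T$ again works.

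The main obstacle is verifying that the semidirect product construction yields a genuine nilmanifold orbit, in particular that $\hat N$ is nilpotent with $\hat N/\hat N^0$ finitely generated. This is exactly where unipotence of $\tau$ is used. The other point needing care is securing the distinctness of the iterates without destroying ergodicity.
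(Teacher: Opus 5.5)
Your proposal is correct and follows essentially the same route as the paper. Both use Leibman's realization (Lemma~\ref{Leibman:realization}), the semidirect product $\tilde N\rtimes_\tau\Z$ with lattice $\tilde\Lambda\rtimes_\tau\Z$ that turns $\tau^n(c)$ into a nilrotation orbit, the induced analytic map $\bar\eta$ restricted to an orbit closure (Lemma~\ref{lem:leibman-orbit}), an irrational circle coordinate to force distinct iterates, and a direct orbit closure in $N/\Lambda$ for the linear case.

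The differences are cosmetic. The paper always takes the orbit closure of $(\omega_0,0)$ under $(s,\theta)$ in the product with $\T$, so ergodicity comes directly from Lemma~\ref{lem:leibman-orbit} and you need not choose $\beta$ generic. The paper also first replaces $N$ by $\langle N^0,a_1,\dots,a_s\rangle$ so that the finite-generation hypothesis of Lemma~\ref{Leibman:realization} holds; you should add this check.
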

\begin{proof}
Write $g(n)=a_1^{p_1(n)}\cdots a_s^{p_s(n)}$. 
We may choose a presentation so that $N=\langle N^0,a_1,\dots,a_s\rangle$. Since $X$ is connected, $N=N^0\Lambda$. Let $\eta$, $\tilde{N}$, $\tilde{\Lambda}$, $c$ and $\tau$ as in Lemma~\ref{Leibman:realization} and put: 
    $$N' = \tilde{N}\rtimes_\tau \Z,\quad \Lambda'=\tilde{\Lambda}\rtimes_\tau \Z,\quad s=(e,1).$$

    Clearly, $\Lambda'\leq N'$ is discrete, and the quotient map $N'/\Lambda'\rightarrow \tilde{N}/\tilde{\Lambda}$ is an analytic isomorphism. In particular, $\Lambda'$ is co-compact. 
    
    Since $\tau(\tilde\Lambda)=\tilde\Lambda$ and $\eta(\tilde\Lambda)\subseteq\Lambda$, the map
\begin{align*}\bar\eta&:N'/\Lambda'\rightarrow X\\ \bar\eta\bigl(&(x,m)\Lambda'\bigr)=\eta(x)\Lambda,\end{align*}
is well defined, and it is analytic by Proposition~\ref{lieisanalytic:prop}(i). As $s^n(c,0)=(\tau^n(c),n)=(\tau^n(c),0)\cdot(e,n)$ with $(e,n)\in\Lambda'$, Lemma~\ref{Leibman:realization} gives
$$\bar\eta\bigl(s^n(c,0)\Lambda'\bigr)=\eta(\tau^n(c))\Lambda=g(n)\Lambda.$$

    We would like to take $\Omega=N'/\Lambda'$, and $\omega_0\coloneqq(c,0)\Lambda'$, but a priori we have no way to guarantee that the iterates $s^n \omega_0$ are distinct. Therefore, we make a small modification: let $N'/\Lambda'\times \T$, where $\T=\R/\Z$, equipped with the action $(x,t)\mapsto (sx,t+\theta)$ for some irrational $\theta$, and let $\Omega$ be the orbit closure of $\omega_0\coloneqq((c,0)\Lambda',0)$ under $S=(s,\theta)$. By the second Lemma of Leibman (Lemma~\ref{lem:leibman-orbit}), $\Omega$ is a nilmanifold. Define $\eta:\Omega\rightarrow X$ by $\eta(y,t)=\bar\eta(y)$. It is analytic, as the restriction of an analytic map to the embedded analytic submanifold $\Omega$ (Lemma~\ref{lem:leibman-orbit}), and it satisfies \eqref{etag}. The circle coordinate ensures that $S^n\omega_0$ are pairwise distinct. When $g(n)=a^n$, we can take $\Omega$ the orbit closure of $(\Lambda,0)$ under $S=(a,\theta)$, and the same argument holds. 
\end{proof}
\subsection{Quantitative equidistribution}
Next, we survey some quantitative equidistribution results of Green and Tao \cite{gt1} and derive some corollaries.\\

Throughout this section $L$ is a connected, simply connected nilpotent Lie group with Lie algebra $\lalg$, $\Delta$ is a lattice, and $M=L/\Delta$ is a nilmanifold. For a connected and rational subgroup $J\leq L$, we denote by $\mu_J$ the normalized Haar measure on the subnilmanifold $J\Delta/\Delta$. Write $e(t)=\exp(2\pi i t)$. The following definition involves some well known facts (see e.g., \cite{hk-book}).
\begin{definition}[Horizontal characters]
 A \emph{horizontal character} is a continuous homomorphism $\chi:L\rightarrow \T$ which vanishes on $\Delta$. It therefore factors through $L/L_2\Delta$. The factor $L/L_2\Delta$ is called a \emph{horizontal torus} and is isomorphic to $\T^{d_1}$ where $d_1=\dim L/L_2 = \dim L-\dim L_2$. Fix some coordinates for this torus, and write $|\chi|_2$ for the Euclidean norm of $\chi$ in these coordinates.  We write $d_\chi:\lalg\rightarrow \R$ for the derivative of $\chi$ (so $\chi\circ \exp = d_\chi \pmod \Z $).
\end{definition}
The following result is a direct corollary of Green and Tao's quantitative version of Leibman's equidistribution theorem. Following their notations, we fix a strong Mal'cev basis $\mathcal{X}$ adapted to filtration on $L$ for $M$ \cite[Definition 2.1]{gt1} which is $Q_0$-rational \cite[Definition 2.4]{gt1} for some $Q_0\geq 2$, and let $d_\mathcal{X}$ the smooth metric on $M$ associated with the basis \cite[Definition 2.2]{gt1}. Let $d_M$ be a smooth Riemannian metric on $M$, we write 
$$\|F\|_{\mathrm{Lip}} = \|F\|_\infty + \sup_{x\not=y} \frac{|F(x)-F(y)|}{d_M(x,y)},\quad \|F\|_{\mathrm{Lip},\mathcal{X}} = \|F\|_\infty + \sup_{x\not=y} \frac{|F(x)-F(y)|}{d_{\mathcal{X}}(x,y)}.$$
\begin{lemma}\label{thm:GT}
    For every $0<\varepsilon<\frac{1}{2}$, there is a constant $B=B(M,\varepsilon)\geq 1$, depending only on $\varepsilon$ and on the fixed variables $(M,\mathcal{X},d_M)$, with the following property: for all $a\in L$ and $N\geq 2$, failure of $\varepsilon$-equidistribution, i.e.,
$$\left|\frac{1}{N}\sum_{n=1}^N F(a^n \Delta) - \int_M F\, d\mu_L \right| > \varepsilon \|F\|_{\mathrm{Lip}}$$ for a Lipschitz function $F:M\rightarrow \C$, implies the existence of a non-zero horizontal character $\chi$ with $|\chi|_2\leq B$ and $\|\chi(a)\|_{\T}\leq B/N$, where $\|x\|_{\T} = \min \{|\{x\}|,|1-\{x\}|\}$ is the usual metric on $\T$.
\end{lemma}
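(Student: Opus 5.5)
The plan is to derive Lemma~\ref{thm:GT} from the quantitative Leibman theorem of Green and Tao \cite[Theorem~2.9]{gt1}, specialised to the linear polynomial sequence $g(n)=a^n$. Three things have to be handled: the change of norms, the meaning of ``failure of equidistribution'', and the size bounds in the conclusion.

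Green--Tao state that if $(g(n)\Delta)_{n\le N}$ is not $\delta$-equidistributed, then there is a non-zero horizontal character $\chi$ with $|\chi|\ll\delta^{-O_M(1)}$ and $\|\chi\circ g\|_{C^\infty[N]}\ll\delta^{-O_M(1)}$. Their notion of $\delta$-equidistribution is measured against the Lipschitz norm $\|F\|_{\mathrm{Lip},\mathcal X}$ built from $d_{\mathcal X}$.

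\textbf{Step 1: comparing the two Lipschitz norms.} Both $d_M$ and $d_{\mathcal X}$ are comparable to a Riemannian metric on the compact manifold $M$. Hence there is a constant $C=C(M,\mathcal X,d_M)\ge1$ with
\[
\|F\|_{\mathrm{Lip},\mathcal X}\le C\|F\|_{\mathrm{Lip}}.
\]
For $d_{\mathcal X}$ this comparability is \cite[Lemma~A.4 / Definition~2.2 discussion]{gt1}, which says $d_{\mathcal X}$ is locally comparable to the Mal'cev coordinate metric. The hypothesis of Lemma~\ref{thm:GT} then gives failure of $(\varepsilon/C)$-equidistribution in the Green--Tao sense, with $\delta=\varepsilon/C$.

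\textbf{Step 2: applying Green--Tao.} We obtain $\chi\neq0$ with $|\chi|\le C'\delta^{-A}$, where $C'$ and $A$ depend only on $M$ and $\mathcal X$. Since $g$ is linear, $\chi(a^n)=n\chi(a)$ modulo $\mathbb Z$, up to the constant term $\chi(g(0))=0$. The smoothness norm $\|\chi\circ g\|_{C^\infty[N]}$ is then essentially $N\|\chi(a)\|_{\mathbb T}$. So $\|\chi(a)\|_{\mathbb T}\le C''\delta^{-A}/N$.

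\textbf{Step 3: fixing the constant and the coordinates.} The norm $|\chi|$ in \cite{gt1} is computed in the Mal'cev coordinates, whereas $|\chi|_2$ uses the fixed coordinates on $L/L_2\Delta\cong\mathbb T^{d_1}$. These differ by a fixed linear change of lattice bases depending only on $\mathcal X$, which changes the bound by a multiplicative constant. Setting $B$ to be the maximum of all the resulting constants times $\delta^{-A}$ gives the required $B(M,\varepsilon)$, uniform in $a$ and $N\ge2$.

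\textbf{Expected main obstacle.} The only genuinely delicate point is Step 2: the definition of the $C^\infty[N]$ norm. In \cite{gt1} it is defined via the Taylor coefficients $\alpha_j$ of a polynomial $p(n)=\sum_j\alpha_j\binom nj$, namely $\sup_{j\ge1}N^j\|\alpha_j\|_{\mathbb T}$. For $p(n)=n\chi(a)$ this reduces exactly to $N\|\chi(a)\|_{\mathbb T}$, so the step should in fact be harmless. I would also double-check that the case where Green--Tao's equidistribution is over $n\in[N]$ matches the averaging range $1\le n\le N$ used here.
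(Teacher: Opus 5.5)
Your proposal follows the paper's proof essentially step for step: bound $\|\cdot\|_{\mathrm{Lip},\mathcal X}$ by a constant times $\|\cdot\|_{\mathrm{Lip}}$, apply Green--Tao's quantitative Leibman theorem to $g(n)=a^n$, and read off $\|\chi(a)\|_{\T}\le B/N$ from the linear coefficient of $\chi\circ g$. The one hypothesis you don't check is that Green--Tao require $\mathcal X$ to be $\frac{1}{\delta}$-rational, which can fail for $\delta=\varepsilon/C$; the paper fixes this by taking $\delta=\min\{\varepsilon/K,1/(2Q_0)\}$ instead, which is harmless since failure of $\delta$-equidistribution persists when $\delta$ shrinks.
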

\begin{proof}
Fix some $K$ so that $\|F\|_{\mathrm{Lip},\mathcal{X}}\leq K\|F\|_{\mathrm{Lip}}$ for all Lipschitz $F:M\rightarrow \mathbb{C}$ (cf., \cite[footnote 4]{gt1}). Applying \cite[Theorem 1.16]{gt1}, in dimension $m$ and degree $r$, there is some $C=C(m,r)\geq 1$ with the following property: for every $\delta\in(0,\frac{1}{2})$, and polynomial $g:\Z\rightarrow L$, if $\mathcal{X}$ is $\frac{1}{\delta}$-rational and $(g(n)\Delta)_{n\in[N]}$ is not $\delta$-equidistributed, then there is a non-trivial horizontal character $\chi$ with $|\chi|\leq C\cdot \delta^{-C}$ and 
$$\|\chi(g(n)) - \chi(g(n-1))\|_{\T} \leq C\delta^{-C}/N$$ for all $n\in [N]$. Set $\delta\coloneqq \min\{\varepsilon/K,1/(2Q_0)\}$ and $B:=C\delta^{-C}$. Then $\delta\in (0,\frac{1}{2})$ and $B\geq 1$ depends only on $m,r,K,Q_0$ and $\varepsilon$. Applying this with $g(n)=a^n$, we get that $|\chi|\leq B$ and $\|\chi(a^n)-\chi(a^{n-1})\|_{\T} = \|\chi(a)\|_{\T}\leq B/N$ for all $n\in [N]$. This completes the proof.
\end{proof}
Next, we wish to study connected subgroups $J\leq L$. Set $\Delta_J:=\Delta\cap J$. We record a simple lemma. \begin{lemma}[The existence of ergodic translations.]\label{lem:generic-a}
Let $J\le L$ be a closed connected rational subgroup. Let $\mu_J$ be the Haar probability measure of the nilmanifold $J/\Delta_J$. Then there is $a\in J$ whose action on $J/\Delta_J$ is ergodic. In fact, this holds true for Haar-almost every $a\in J$.
\end{lemma}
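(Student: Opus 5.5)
The plan is to reduce ergodicity of $a$ on $J/\Delta_J$ to a condition on horizontal characters and then show that this condition fails only on a Haar-null set. Since $J$ is closed, connected and rational, $\Delta_J=\Delta\cap J$ is a lattice in $J$, and $J/\Delta_J$ is a connected nilmanifold; here $J$ is simply connected, being a connected subgroup of the simply connected nilpotent group $L$. By the classical criterion for nilrotations (Leibman, or Green's theorem; equivalently the case $N\to\infty$ of Lemma~\ref{thm:GT} applied to $J/\Delta_J$), translation by $a\in J$ on the connected nilmanifold $J/\Delta_J$ is ergodic if and only if the induced rotation on the horizontal torus $J/J_2\Delta_J$ is ergodic. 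In other words, it is ergodic if and only if $\chi(a)\neq 0$ in $\T$ for every non-zero horizontal character $\chi$ of $J/\Delta_J$.

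I would derive this criterion from Lemma~\ref{thm:GT} directly. If the action of $a$ were not uniquely ergodic, there would be a Lipschitz $F$ and $\varepsilon>0$ for which $\varepsilon$-equidistribution of $(a^n\Delta_J)$ fails for infinitely many $N$. For each such $N$, Lemma~\ref{thm:GT} gives a non-zero character $\chi_N$ with $|\chi_N|_2\le B$ and $\|\chi_N(a)\|_\T\le B/N$. There are only finitely many characters with $|\chi|_2\le B$, so one character $\chi$ recurs along a subsequence of $N$, and for it $\|\chi(a)\|_\T=0$. This argument applies to the base point. For a general point one uses instead the standard fact that ergodicity with respect to Haar measure is equivalent to unique ergodicity for nilrotations.

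For the almost-every statement, fix a non-zero horizontal character $\chi:J\to\T$. It is a continuous surjective homomorphism onto $\T$, since its image is a nontrivial connected subgroup. Its kernel $\ker\chi$ is a closed subgroup of codimension one, so it is a proper embedded submanifold of $J$ and hence Haar-null. The set of non-ergodic $a$ is therefore contained in $\bigcup_\chi \ker\chi$, a countable union of null sets, because the horizontal characters form a subgroup of $\Z^{d_1}$. This set is null. In particular ergodic elements exist, which gives the first claim.

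The only delicate step is the ergodicity criterion itself, which requires care with the base point and with identifying horizontal characters of $J$ that vanish on $\Delta_J$. If citing it is acceptable, the remainder is the routine measure-zero union argument above.
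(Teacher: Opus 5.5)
Your proposal is correct and follows essentially the same route as the paper: you reduce ergodicity on $J/\Delta_J$ to the non-vanishing of $\chi(a)$ for every non-zero horizontal character via Green's theorem, then discard the countably many null sets $\{a:\chi(a)=0\}$; the only cosmetic difference is that you treat $\ker\chi$ directly as a proper closed subgroup of $J$, whereas the paper pulls back to the affine hyperplanes $d_\chi^{-1}(m)\subseteq\mathfrak{j}$ (implicitly using that $\exp$ carries Lebesgue-null sets to Haar-null sets). In your optional derivation of Green's criterion from Lemma~\ref{thm:GT}, the base-point issue closes at once: $\chi(g^{-1}ag)=\chi(a)$, so the argument gives equidistribution of $a^ng\Delta_J=g(g^{-1}ag)^n\Delta_J$ for every $g\in J$, hence unique ergodicity.
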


\begin{proof}
If $J=\{e\}$, take $a=e$. Suppose that $\dim J\geq 1$, and let $\mathfrak{j}$ denote the Lie algebra of $J$. Let $\chi:J\rightarrow \mathbb{T}$ be a horizontal character. If $\chi\not=0$, then since $\exp:\mathfrak{j}\rightarrow J$ is onto, $\chi(\exp(V)) \not=0$ for some $V$. Therefore, the derivative $d_\chi(V)$ is not an integer and in particular, $d_\chi$ is not trivial. Let $m\in \mathbb{Z}$, the level set
$$H_{\chi,m} = \{V\in \mathfrak{j} : d_\chi(V)=m\}$$ is an affine hyperplane in $\mathfrak{j}\cong \mathbb{R}^{\dim J}$. In particular, it is a null set with respect to the Lebesgue measure. Since there are at most countably many horizontal characters, the union of $H_{\chi,m}$ over all $\chi,m$ is also Lebesgue-null. Hence, we can choose $V$ outside this union (and almost every $V$ is outside this union). Take $a=\exp(V)\in J$. From the argument above, $\chi(a)\not =0$ for every non-zero horizontal character on $J$. Since the characters generate the $L^2$-space of the horizontal torus, we conclude that the induced action of $a$ on $J/J_2\Delta_J$ is ergodic. By Green's theorem the action of $a$ on $J/\Delta_J$ is also ergodic.
\end{proof}
Suppose that $J\leq L$ is a connected subgroup. Roughly speaking, we can measure how much $L$ is larger than $J$ by studying the horizontal characters that vanish on $J$. The following lemma is a useful corollary of the previous theorem which makes this precise.
\begin{lemma}\label{vanishorequi}
  For every $\varepsilon\in(0,\frac{1}{2})$, there is a finite set $\mathcal{S}_{\varepsilon}$ of non-zero horizontal characters of $M$ such that, for each connected rational $J\leq L$ one of the following holds:
  \begin{itemize}
      \item (Vanishing) Some $\chi\in \mathcal{S}_{\varepsilon}$ vanishes on $J$, or
      \item (approximating) 
      \begin{equation}\label{apprx}\left| \int_M F\, d\mu_J - \int_M F d\mu_L\right| \leq 2\varepsilon \|F\|_{\mathrm{Lip}}
      \end{equation} for every Lipschitz function $F$.
  \end{itemize}
\end{lemma}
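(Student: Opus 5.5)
The idea is to reduce to Lemma~\ref{thm:GT}, using an element $a\in J$ that acts ergodically on $J/\Delta_J$. Fix $\varepsilon\in(0,\frac12)$, let $B=B(M,\varepsilon)$ be the constant from Lemma~\ref{thm:GT}, and define
$$\mathcal{S}_\varepsilon=\{\chi\neq 0 \text{ horizontal}: |\chi|_2\le B\}.$$
This set is finite, since horizontal characters correspond to the nonzero lattice points of $\Z^{d_1}$ in the chosen torus coordinates.

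Now let $J\le L$ be connected and rational, and suppose no $\chi\in\mathcal{S}_\varepsilon$ vanishes on $J$. We will prove \eqref{apprx}. By Lemma~\ref{lem:generic-a}, Haar-almost every $a\in J$ acts ergodically, hence uniquely ergodically, on $J/\Delta_J\cong J\Delta/\Delta$. The measure $\mu_J$ is the pushforward of the Haar measure on $J/\Delta_J$. So for every Lipschitz $F$,
$$\frac1N\sum_{n=1}^N F(a^n\Delta)\to\int_M F\,d\mu_J.$$

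We also want to choose $a$ so that $\chi(a)\notin\Z$ for each of the finitely many $\chi\in\mathcal{S}_\varepsilon$. For such $\chi$, the restriction $\chi|_J$ is a nonzero continuous homomorphism $J\to\T$, because by assumption it does not vanish on $J$. Since $J$ is connected, $\chi|_J$ is a surjection onto $\T$ with derivative $d_\chi|_{\mathfrak j}\neq 0$. Repeating the argument of Lemma~\ref{lem:generic-a}, the set of $V\in\mathfrak j$ with $d_\chi(V)\in\Z$ is a countable union of affine hyperplanes, hence Lebesgue-null. So we can pick $a=\exp(V)$ that is simultaneously ergodic on $J/\Delta_J$ and satisfies $\|\chi(a)\|_\T>0$ for all $\chi\in\mathcal{S}_\varepsilon$. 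Set $\eta=\min_{\chi\in\mathcal{S}_\varepsilon}\|\chi(a)\|_\T>0$, and take $N$ large enough that $B/N<\eta$ and that the ergodic average above is within $\varepsilon\|F\|_{\mathrm{Lip}}$ of $\int_M F\,d\mu_J$.

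We argue by contradiction. If
$$\left|\frac1N\sum_{n\le N} F(a^n\Delta)-\int_M F\,d\mu_L\right|>\varepsilon\|F\|_{\mathrm{Lip}},$$
then Lemma~\ref{thm:GT} gives a nonzero $\chi$ with $|\chi|_2\le B$ and $\|\chi(a)\|_\T\le B/N<\eta$. Such a $\chi$ lies in $\mathcal{S}_\varepsilon$, which contradicts the definition of $\eta$. Hence the average is within $\varepsilon\|F\|_{\mathrm{Lip}}$ of $\int_M F\,d\mu_L$. Combining this with the first choice of $N$ via the triangle inequality gives \eqref{apprx} with constant $2\varepsilon$.

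The main obstacle is an order-of-quantifiers issue: $N$ depends on $F$ through the ergodic-average convergence. This is harmless, because $a$ and $\eta$ are fixed independently of $F$, and for each $F$ we may take $N$ as large as needed. The only remaining points to check are that $\mathcal{S}_\varepsilon$ is independent of $J$, which it is, and that $\mu_J$ is indeed the limit of the orbit averages.
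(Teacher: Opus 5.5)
Your proposal is correct and follows the paper's proof. You pick an ergodic $a\in J$, take $\mathcal{S}_\varepsilon$ to be the nonzero horizontal characters with $|\chi|_2\le B(M,\varepsilon)$, and use Lemma~\ref{thm:GT} once $B/N<\min_{\chi\in\mathcal{S}_\varepsilon}\|\chi(a)\|_{\T}$; you then combine this with unique ergodicity via the triangle inequality. The only difference is that you secure $\chi(a)\neq 0$ by a separate genericity choice of $a$. The paper gets this directly from ergodicity: $\chi|_J$ is a nonzero horizontal character of $J/\Delta_J$, so if $\chi(a)=0$ then $e(\chi)$ would be a non-constant $a$-invariant function. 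Your extra step is harmless but redundant.
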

\begin{proof}
First choose an ergodic translation $a\in J$, which exists by the previous lemma. Since $J/\Delta_J$ is then uniquely ergodic, the pointwise ergodic theorem applies everywhere for continuous functions. In particular,
\begin{equation}\label{equiJ}\lim_{N\rightarrow\infty} \frac{1}{N}\sum_{n=1}^N F(a^n\Delta_J) = \int_M F\,d\mu_J
\end{equation}
for all continuous $F:J/\Delta_J\rightarrow \C$ (and therefore also for all continuous $F$ on $M$).

Apply Lemma~\ref{thm:GT} for this $a$ on $M$. We get a constant $B_M=B(M,\varepsilon)$ such that if the following equidistribution in $M$ fails,
$$\left|\frac{1}{N}\sum_{n=1}^N F(a^n \Delta) - \int_M F\, d\mu_L \right| > \varepsilon \|F\|_{\mathrm{Lip}}$$ then there exists a horizontal character on $L/\Delta$ such that $\|\chi(a)\|_{\T}\leq B_M/N$ and $|\chi|_2\le B_M$.
Let $S_{\varepsilon}$ be the set of all non-trivial horizontal characters with $|\chi|_2\leq B_M$. Then it is a finite set. If no $\chi$ in $S_\varepsilon$ vanishes on $J$, then by ergodicity, for $N$ sufficiently large $\|\chi(a)\|_{\T}> B_M/N$. In this case for all $N$ sufficiently large, we must have
$$\left|\frac{1}{N} \sum_{n=1}^N F(a^n\Delta_J) - \int_M F\, d\mu_L\right| \leq \varepsilon\|F\|_{\mathrm{Lip}}$$ for all Lipschitz functions. Combining with \eqref{equiJ} we get the desired result.
\end{proof}
The following proposition characterize when a sequence of connected subgroups $J_j$ approximate $L$.
\begin{proposition}\label{approximation:prop}
    Let $J_n\leq L$ be a sequence of connected closed rational subgroups. Then, $\mu_{J_n}\rightarrow \mu_L$ weakly if and only if every non-zero horizontal $\chi$ is eventually non-zero on $J_n$.
\end{proposition}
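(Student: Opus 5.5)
We prove the two implications separately, using Lemma~\ref{vanishorequi} for the ``if'' direction and a Fourier-type test function for the ``only if'' direction.

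\emph{Sufficiency.} Assume every non-zero horizontal character $\chi$ is eventually non-zero on $J_n$. Fix $\varepsilon\in(0,\frac12)$ and let $\mathcal{S}_\varepsilon$ be the finite set supplied by Lemma~\ref{vanishorequi}.
\begin{itemize}
\item Since $\mathcal{S}_\varepsilon$ is finite, there is $n_0$ such that for $n\geq n_0$ no $\chi\in\mathcal{S}_\varepsilon$ vanishes on $J_n$.
\item For $n\geq n_0$ the vanishing alternative of Lemma~\ref{vanishorequi} therefore fails, so the approximating alternative~\eqref{apprx} holds. That is, $\left|\int_M F\,d\mu_{J_n}-\int_M F\,d\mu_L\right|\leq 2\varepsilon\|F\|_{\mathrm{Lip}}$ for every Lipschitz $F$.
\item As $\varepsilon$ was arbitrary, $\int F\,d\mu_{J_n}\to\int F\,d\mu_L$ for every Lipschitz $F$.
\item Lipschitz functions are uniformly dense in $C(M)$ by Stone--Weierstrass, and the measures involved are probability measures. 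Hence $\mu_{J_n}\to\mu_L$ weakly.
\end{itemize}

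\emph{Necessity.} Suppose some non-zero horizontal character $\chi$ vanishes on $J_n$ for infinitely many $n$. Let $F(x)=e(\chi(x))$, which is well defined and continuous on $M$ because $\chi$ vanishes on $\Delta$.
\begin{itemize}
\item Since $\chi$ is a non-trivial homomorphism $L\to\T$, the function $F$ is a non-trivial character of the horizontal torus. Hence $\int_M F\,d\mu_L=0$.
\item Whenever $\chi$ vanishes on $J_n$, the function $F$ is identically $1$ on the support $J_n\Delta/\Delta$ of $\mu_{J_n}$, so $\int F\,d\mu_{J_n}=1$.
\item This happens for infinitely many $n$, so $\int F\,d\mu_{J_n}\not\to\int F\,d\mu_L$, and weak convergence fails.
\end{itemize}

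The only real work is already contained in Lemma~\ref{vanishorequi}, which converts Green--Tao quantitative equidistribution into a uniform dichotomy over a finite set of characters. Given that lemma, the remaining step is the finiteness argument above, which ensures the non-vanishing happens eventually for all of $\mathcal{S}_\varepsilon$ simultaneously.
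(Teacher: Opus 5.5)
Your proof is correct and follows the paper's argument: sufficiency comes from the finite set $\mathcal{S}_\varepsilon$ of Lemma~\ref{vanishorequi} together with density of Lipschitz functions, and necessity comes from testing weak convergence against $e(\chi(x))$. The only difference is cosmetic. The paper first proves the full dichotomy \eqref{J-orth} using an ergodic element of $J$, whereas you need only the two easy cases ($F\equiv 1$ on $J_n\Delta/\Delta$ when $\chi|_{J_n}=0$, and $\int_M F\,d\mu_L=0$), which is slightly more economical.
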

\begin{proof}
This is a corollary of the previous lemma. First observe that
    \begin{equation}\label{J-orth}
\int_M e(\chi(x)) d\mu_J(x) = \begin{cases}  1 & \chi \text{ vanishes on } J\\
0 & \text{otherwise.}\end{cases}
    \end{equation}
To see this, choose $a\in J$ which acts ergodically. Since the measure is invariant to translations by $a$ we have $$\int_M e(\chi(x)) d\mu_J(x) = e(\chi(a))\cdot\int_M e(\chi(x)) d\mu_J(x)$$ and we proved before that for an ergodic $a$ $\chi(a)=0$ iff $\chi|_J=0$, as required.

Therefore, if $\mu_{J_n}\rightarrow \mu_L$ weakly, then $\int_M e(\chi(x))d\mu_{J_n}\rightarrow\int_M e(\chi(x))d\mu_{L}$, and therefore every non-zero horizontal character $\chi$ on $L$ is eventually non-zero on $J_n$. 

Conversely, suppose that every non-zero horizontal character is eventually non-trivial on $J_n$, we wish to prove that $\mu_{J_n}\rightarrow \mu_L$ weakly. Fix a Lipschitz function $F$, by the density of the Lipschitz functions in the space of continuous functions it suffices to show that
$$\lim_{n\rightarrow\infty} \int F\, d\mu_{J_n} = \int_M F\,d\mu_L.$$ 

Let $\varepsilon\in(0,\frac{1}{2})$, it suffices to find some $N$ so that for all $n>N$ we have
$$ \left|\int F\, d\mu_{J_n} - \int_M F\,d\mu_L\right|\leq 2\varepsilon\|F\|_{\mathrm{Lip}}.$$ 

To obtain this we apply the previous lemma. Indeed, we get a finite set $\mathcal{S}$ of non-zero horizontal characters. Since $\mathcal{S}$ is finite, our assumption implies that for $N$ sufficiently large, every $\chi\in \mathcal{S}$ is non-zero on $J_n$ for all $n>N$. Thus, \eqref{apprx} gives the desired result.
\end{proof}

\section{Proof of Theorem~\ref{leibman:thm}}
Suppose we wish to prove that a sequence $a(n)$ is in $\GenNz_{s}$, and suppose we have already identified the underlying nilsystem, denoted by $M=(N/\Lambda,R_\alpha)$. A natural approach is to define $F:M\rightarrow \C$ by 
\begin{equation}\label{Fdef}
   F(x)= \begin{cases}
        a(n) & x=\alpha^n\Lambda\\
        0 & \text{otherwise.}
    \end{cases}.
\end{equation}
$F$ is well defined when $\alpha^n\Lambda\not=\alpha^m \Lambda$ unless $n=m$. In the next lemma we establish precisely when this choice of $F$ is Riemann integrable.
\begin{definition}[Controlled sequences]\label{def:controlled}
Let $(\Omega,S)$ be an ergodic nilsystem and $\omega_0\in\Omega$. A bounded sequence $a:\Z\rightarrow\C$ is \emph{controlled by $(\Omega,\omega_0)$} if there is a $\mu_\Omega$-null set $E\subseteq\Omega$ such that $a(n_j)\rightarrow0$ whenever $|n_j|\rightarrow\infty$ and $S^{n_j}\omega_0\rightarrow\omega\notin E$.
\end{definition}
\begin{lemma}[Sequences in $\GenNz_s$ are dynamically controlled]\label{control:lem}
Let $s\geq 1$. A bounded sequence is in $\GenNz_{s}$ if and only if it is controlled by $(X,x)$ for some ergodic $s$-step nilsystem $X$ and some $x\in X$.
\end{lemma}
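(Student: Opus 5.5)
The forward direction comes straight from the definition of Riemann integrability. For the converse, the natural symbol is the function \eqref{Fdef} supported on the orbit, and the only real work is to make that function well defined and to control its discontinuities.

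\emph{($\Rightarrow$)} Let $a(n)=F(\alpha^n x_0)$, where $F$ is Riemann integrable on an ergodic $s$-step nilsystem $X=(G/\Gamma,R_\alpha)$ and $F$ vanishes outside a null set $Z$. Let $D$ be the set of discontinuities of $F$; it is Haar-null by our convention on Riemann integrability. Put $E=Z\cup D$, which is null. If $\alpha^{n_j}x_0\to\omega\notin E$, then $F$ is continuous at $\omega$ and $F(\omega)=0$. Hence $a(n_j)=F(\alpha^{n_j}x_0)\to F(\omega)=0$, so $a$ is controlled by $(X,x_0)$; we do not even need $|n_j|\to\infty$.

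\emph{($\Leftarrow$)} Suppose $a$ is controlled by $(X,x)$ with null set $E$, where $X=(G/\Gamma,R_\alpha)$ is ergodic of step $s$.

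\textbf{Step 1: make the orbit injective.} The iterates $\alpha^n x$ need not be distinct, so we first pass to $X'=X\times\T$ with the action $(y,t)\mapsto(\alpha y,t+\theta)$. We choose $\theta$ irrational and outside the countable set of (arguments of) eigenvalues of $X$. Then $X'$ is an ergodic nilsystem, still of step $s$ (step $1$ if $s=1$), and we can present it in the standard form if desired. The iterates $S^n\omega_0$ of $\omega_0=(x,0)$ under $S=(\alpha,\theta)$ are pairwise distinct.

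Control transfers to $X'$ with the null set $E'=E\times\T$. Indeed, if $|n_j|\to\infty$ and $S^{n_j}\omega_0\to(\omega,t)$ with $\omega\notin E$, then $\alpha^{n_j}x\to\omega$, so $a(n_j)\to0$.

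\textbf{Step 2: define the symbol.} Define $F$ on $X'$ as in \eqref{Fdef}: $F(S^n\omega_0)=a(n)$, and $F=0$ off the orbit $\mathcal{O}=\{S^n\omega_0\}$. This is well defined by Step 1, and $F$ is bounded. Since $\mathcal{O}$ is countable and the Haar measure on the positive-dimensional nilmanifold $X'$ has no atoms, $\mathcal{O}$ is null. Hence $F$ vanishes outside a null set, and $a(n)=F(S^n\omega_0)$.

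\textbf{Step 3: the discontinuities of $F$ form a null set.} This is the main step. We claim $F$ is continuous at every $\omega\notin E'\cup\mathcal{O}$, a co-null set; this yields Riemann integrability and hence $a\in\GenNz_s$.

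Fix such an $\omega$, so $F(\omega)=0$. Suppose, for contradiction, that $y_k\to\omega$ with $|F(y_k)|>\varepsilon$. Then each $y_k$ lies on the orbit, say $y_k=S^{n_k}\omega_0$.

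If the $n_k$ were bounded, the $y_k$ would take only finitely many values. A convergent sequence in a finite set is eventually constant, so we would get $\omega\in\mathcal{O}$, a contradiction.

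So, after passing to a subsequence, $|n_k|\to\infty$ with $S^{n_k}\omega_0\to\omega\notin E'$. Control then forces $a(n_k)=F(y_k)\to0$, contradicting $|F(y_k)|>\varepsilon$.

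The delicate point is exactly this bounded-index case. It is why the orbit must be injective (Step 1) and why the countable set $\mathcal{O}$ is added to the exceptional set.
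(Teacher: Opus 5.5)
Your proof is correct. The forward direction and your Step~3 match the paper's argument exactly.

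The difference lies in how you obtain an injective orbit. The paper keeps the original system and splits into two cases.
\begin{itemize}
\item If the orbit of $x$ is finite, minimality forces $X$ itself to be finite. Then $E=\emptyset$, control forces $a\in c_0$, and such a sequence is controlled, with empty exceptional set, by any irrational rotation.
\item If the orbit is infinite, its points are automatically pairwise distinct, since a repetition would make $x$ periodic. The countable orbit is null, so the symbol \eqref{Fdef} is defined on $X$ itself.
\end{itemize}
Your product with a circle rotation handles both cases at once. This is the same device the paper uses in Lemma~\ref{auxiliary:lemma}. The cost is that you change the system and must verify that $X\times\T$ is ergodic.

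On that point, your condition on $\theta$ is slightly too weak. $X\times\T$ is ergodic only if $e(k\theta)$ avoids the eigenvalues of $X$ for every $k\neq 0$, not just for $k=1$. For example, take $X$ to be the rotation by $\sqrt{2}$ on $\T$ and $\theta=\sqrt{2}/2$; then the function $e(y-2t)$ is invariant. The fix is harmless in either of two ways. First, the corrected condition still excludes only countably many $\theta$. Second, you can replace $X\times\T$ by the orbit closure of $(x,0)$. Its Haar measure projects onto $\mu_X$ by unique ergodicity of $X$, so $E\times\T$ still meets it in a null set.
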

The key takeaway is the fact that this criterion is a \emph{local property}. Namely, once the sequence and the limit $x_0$ is fixed the behavior depends on a small neighborhood around $x_0$. This will later allow us to apply analytic tools to resolve Theorem~\ref{leibman:thm}. We now prove this lemma.
\begin{proof}
For the first direction, suppose that $a\in \GenNz_s$. By definition, there exists an ergodic $s$-step nilsystem $X=(N/\Lambda,R_\alpha)$ and a Riemann-integrable function $F:X\rightarrow \C$ vanishing outside of a Haar null set $E'$ and such that $a(n)=F(\alpha^n x\Lambda)$. Replacing $\Lambda$ by $x\Lambda x^{-1}$ and modifying $F$ accordingly, we may assume without loss of generality that $x=e$. Let $E=E'\cup \mathrm{Disc}(F)$ where $\mathrm{Disc}(F)$ denotes the set of all discontinuities of $F$. If $|n_j|\rightarrow \infty$ and $\alpha^{n_j}\Lambda\rightarrow x_0\not\in E$, then by the continuity of $F$ in $x_0$ and the fact that $F(x_0)=0$, we deduce that $\lim_{j\rightarrow\infty} a(n_j) = F(x_0)=0.$ This completes the proof of the first direction.

For the second direction, suppose that $a$ is controlled by $(X,x)$, where $X=(N/\Lambda,R_\alpha)$, with null set $E$. Replacing $\Lambda$ by a conjugate, we may assume that $x=e\Lambda$. Write $\mathcal{O}(e\Lambda)$ for the orbit of $e\Lambda$. If $\mathcal{O}(e\Lambda)$ is finite, ergodicity implies that the nilmanifold is finite, but then the set $E$ from the assumption must be empty, and therefore $\lim_{|n|\rightarrow\infty} a(n)=0$. In this case, the assumption of the lemma holds for every ergodic $s$-step nilsystem with $E=\emptyset$, for instance for an irrational rotation on $\T$, whose orbits are infinite. Hence, replacing $X$ by such a system, it suffices to treat the case where $\mathcal{O}(e\Lambda)$ is infinite.

It is left to solve for the case where $\mathcal{O}(e\Lambda)$ is infinite. In that case ergodicity implies it is Haar null and the orbit sequence $\alpha^n \Lambda$ are pairwise distinct. Let $F$ be as in \eqref{Fdef}, we see that $F$ vanishes almost everywhere. Let $E$ be as in the assumption, we prove that $\mathrm{Disc}(F)\subseteq E\cup\mathcal{O}(e\Lambda)$ (hence completing the proof). Let $x_0\not\in E\cup \mathcal{O}(e\Lambda)$, since $F(x_0)=0$, it suffices to show that whenever $x_j\rightarrow x_0$ we have $F(x_j)\rightarrow 0$. Since $F$ vanishes outside the orbit, we may exclude every $x_j$ not from the orbit. Hence, $x_j = \alpha^{n_j}\Lambda$ without loss of generality. Since $x_0\not\in \mathcal{O}(e\Lambda)$, we must have $|n_j|\rightarrow \infty$. Hence, by the assumption 
$$\lim_{j\rightarrow\infty} F(x_j) =\lim_{j\rightarrow\infty} a(n_j)=0,$$ as required.
\end{proof}

In both Theorem~\ref{leibman:thm} and Theorem~\ref{FK:thm}, we need to identify an error term and show that it equals to a generalized nilsequence which vanishes outside of a null set. 

We start with Theorem~\ref{leibman:thm}, the proof of Theorem~\ref{FK:thm} will be left till later. Let $\XX=N/\Lambda$ be a nilmanifold $\YY=H\Lambda/\Lambda$ with $H\leq N$ closed connected and rational and $g:\Z\rightarrow N$ be a polynomial sequence. Let $F\in C(X)$ be arbitrary. Leibman's equidistribution results give rise to a candidate: We say that $F$ is an \emph{error term for $(Y,g)$} if 
\begin{equation}\label{weakmixing}\lim_{N\rightarrow\infty}\frac{1}{N}\sum_{n=1}^N\left|\int_{g(n)Y} F\,d\mu_{g(n)Y}\right|=0.
\end{equation}
\begin{lemma}[Structured and error terms for Theorem~\ref{leibman:thm}]\label{str-err:lem} Let $\XX,\YY,\ZZ$ and $g$ be as in Theorem~\ref{leibman:thm}, and write $\ZZ=K\Lambda/\Lambda$ where $K$ is the normal closure of $H$ in $N$. Then $F=F_s+F_e$ is a sum of two continuous functions such that:
    \begin{itemize}
        \item (Structured component) $$F_s(x\Lambda) = \int_{xZ}F\,d\mu_{xZ},$$ where $\mu_{xZ}$ is the Haar measure on $xZ$, is well defined and independent of the choice of $x$. Moreover, it satisfies that $F_s(g(n)\Lambda) = \int_{g(n)Z} F\,d\mu_{g(n)Z}$ (thus, $b_{F_s}(n) =0$).
        \item (error component) $$F_e=F-F_s$$ is an error term for $(Y,g)$.
    \end{itemize}
\end{lemma}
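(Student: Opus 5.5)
The plan is to treat the two components separately. First I check that $F_s$ is a well-defined continuous function that is constant along cosets of $\ZZ$. Then I show that $F_e$ averages to zero in the sense of \eqref{weakmixing}, using a van der Corput/second-moment argument together with Leibman's orbit-closure theory (Lemmas~\ref{lem:leibman-orbit} and \ref{Leibman:realization}) applied to a product system.

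\emph{Structured component.} Write $\mu_K$ for the Haar measure on $K\Lambda/\Lambda$. I first record that $K=\langle\langle H\rangle\rangle$ is a closed, connected, rational normal subgroup. Connectedness and normality are clear. For rationality and closedness I would cite Leibman, or argue that $K$ is generated by the connected rational subgroups $gHg^{-1}$ with $g$ ranging over a finite generating set modulo $N^0$, using $N=N^0\Lambda$. Since $K$ is normal, $xK\Lambda=Kx\Lambda$, and this set depends only on $x\Lambda$: indeed $x\lambda K\Lambda=xK\lambda\Lambda=xK\Lambda$. I then set
\[
F_s(x\Lambda)=\int_{K\Lambda/\Lambda}F(xy)\,d\mu_K(y).
\]
The push-forward of $\mu_K$ under $y\mapsto xy$ is a probability measure on the closed homogeneous set $Kx\Lambda$, and it is $K$-invariant because $kx=x(x^{-1}kx)$ with $x^{-1}kx\in K$. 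By uniqueness of the $K$-invariant probability measure on the closed $K$-orbit $Kx\Lambda$ (compare Lemma~\ref{lem:leibman-orbit}(1)), this push-forward is $\mu_{xZ}$, and the same uniqueness gives independence of the representative $x$. Continuity of $F_s$ follows from uniform continuity of $F$ on the compact space $X$. Evaluating at $x=g(n)$ gives $F_s(g(n)\Lambda)=\int_{g(n)Z}F\,d\mu_{g(n)Z}$. Moreover, $F_s$ is constant on $K$-orbits, so $\int_{g(n)Y}F_s\,d\mu_{g(n)Y}=F_s(g(n)\Lambda)$ as well, since $g(n)Y\subseteq g(n)Z$. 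Hence $b_{F_s}\equiv0$.

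\emph{Error component.} By construction $F_e$ is continuous and satisfies $\int_{xZ}F_e\,d\mu_{xZ}=0$ for every $x$. By Cauchy--Schwarz it suffices to show that
\[
\frac1N\sum_{n=1}^N\Bigl|\int_Y F_e(g(n)y)\,d\mu_Y(y)\Bigr|^2
=\int_{Y\times Y}\frac1N\sum_{n=1}^N F_e(g(n)y)\overline{F_e(g(n)y')}\,d(\mu_Y\times\mu_Y)(y,y')\longrightarrow0.
\]
By dominated convergence, it is enough to show that for $\mu_Y\times\mu_Y$-almost every pair $(y,y')$ the inner average tends to $0$. For a fixed pair, the polynomial sequence $(g(n),g(n))$ acting on $(y,y')\in X\times X$ is, by Leibman's pointwise theorem (realize it via Lemma~\ref{Leibman:realization} and apply Lemma~\ref{lem:leibman-orbit}), well distributed on a finite union of subnilmanifolds of the form $W_{y,y'}=\mathcal{H}\cdot(y,y')$. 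Hence the average converges to $\int_{W_{y,y'}}F_e\otimes\overline{F_e}$.

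\emph{Main obstacle.} The crux is to show that for generic $(y,y')=(h\Lambda,h'\Lambda)$ with $h,h'\in H$, the group $\mathcal{H}$ contains $\{e\}\times K$ (or at least enough of it that $W_{y,y'}$ is invariant under $\{e\}\times K$). Once this holds, integrating $F_e\otimes\overline{F_e}$ first along the $K$-fibres in the second coordinate gives $0$, because $\int_{xZ}F_e=0$. I would prove this by a Ratner/Leibman-type argument on horizontal characters. Density of $g(\Z)Y$ together with $N=\langle N^0,g(n)\rangle$ forces the orbit closure of the family $\{(g(n)h,g(n)h')\}$ over a full-measure set of $(h,h')\in H\times H$ to be invariant under $H\times H$. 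Consequently $\mathcal{H}$ contains $\{e\}\times H$ for generic pairs; this uses Lemma~\ref{lem:generic-a}-type genericity to avoid the countably many rational obstructions. Since $\mathcal{H}$ also contains the diagonal elements $(g(n),g(n))$, and these together with $N^0$ generate $N$, conjugating $\{e\}\times H$ by diagonal elements gives $\{e\}\times gHg^{-1}$ for all $g\in N$, and these generate $\{e\}\times K$. If the almost-everywhere genericity step proves delicate, I would fall back on Leibman's original Cesàro result from \cite{Leibman1}, which states that $\mu_{g(n)Y}-\mu_{g(n)Z}\to0$ along a set of density one. That statement is exactly \eqref{weakmixing} for $F_e$.
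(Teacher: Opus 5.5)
Your structured component is correct and matches the paper. The paper checks $\mu_{x\lambda Z}=\mu_{xZ}$ by noting that conjugation by $\lambda\in\Lambda$ is an automorphism of $K$ preserving $K\cap\Lambda$. Your uniqueness-of-the-$K$-invariant-measure argument is equivalent, and you also verify continuity, which the paper omits. Your fallback for the error component is exactly the paper's proof and is correct: $b_{F_s}\equiv 0$ gives $\int_{g(n)Y}F_e\,d\mu_{g(n)Y}=b_F(n)$, and Leibman's theorem \cite[Proposition~3.1]{Leibman1} gives $\frac1N\sum_{n\le N}|b_F(n)|\to0$. The problem is your primary, self-contained route for the error component, which fails at its key step.

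You reduce to showing that the inner average tends to $0$ for $\mu_Y\times\mu_Y$-almost every pair, and you claim that generic pair orbit closures are invariant under $\{e\}\times K$. Both are false in general. Take $X=\T^2$, $H=\R\times\{0\}$, $g(n)=(0,n\beta)$ with $\beta$ irrational, and $F(u,t)=e(u)=\exp(2\pi i u)$. All hypotheses of Theorem~\ref{leibman:thm} hold. Here $K=H$, $F_s\equiv 0$, $F_e=F$, and $\int_{g(n)Y}F_e\,d\mu_{g(n)Y}=0$ for every $n$.

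Now fix $y=(s,0)$ and $y'=(s',0)$. The orbit closure of $(y,y')$ is $\{((s,t),(s',t)):t\in\T\}$. It is not invariant under $\{0\}\times K$, nor under $H\times H$. The inner average is identically $e(s-s')$, which has modulus one for \emph{every} pair. The cancellation appears only after integrating over $(y,y')$.

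Your argument breaks at the inference from invariance of the union of these orbits, over a full-measure set of pairs, to invariance of each generic orbit closure. A repaired version would have to show that the averaged limit measure $\nu=\lim_{N}\frac1N\sum_{n\le N}(g(n)\times g(n))_*(\mu_Y\times\mu_Y)$ is $\{e\}\times K$-invariant. In the example, $\nu$ is Haar measure on $\{((u,t),(u',t))\}$, which is indeed $\{0\}\times K$-invariant. Combined with $\int_{xZ}F_e\,d\mu_{xZ}=0$, this invariance does give $\int F_e\otimes\overline{F_e}\,d\nu=0$. But it is a genuinely different statement, and proving it amounts to reproving Leibman's theorem.

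As it stands, your proposal is a valid proof only through the citation. Present that citation as the argument for the error component and drop the pointwise-genericity route.
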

\begin{proof}
Clearly $F=F_s+F_e$. For the structured component, observe that $F_s$ is well defined: if $\lambda\in\Lambda$, then $x\lambda Z=xZ$ since $K$ is normal, and $\mu_{x\lambda Z}=\mu_{xZ}$ since conjugation by $\lambda$ is an automorphism of $K$ preserving $K\cap\Lambda$. Moreover, since $H\subseteq K$, $F_s$ is constant on $g(n)Y$ and $g(n)Z$, on both its value is $F_s(g(n)\Lambda)=\int_{g(n)Z} F\,d\mu_{g(n)Z}$. Let $F_e=F-F_s$, then, 
$$\lim_{N\rightarrow\infty}\frac{1}{N} \sum_{n=1}^N \left|\int_{g(n)Y}F_e\,d\mu_{g(n)Y} \right| = \lim_{N\rightarrow\infty}\frac{1}{N}\sum_{n=1}^N |b_F(n)|=0,$$ where the last equality was established by Leibman \cite[Proposition 3.1]{Leibman1}.
\end{proof}
The original, now refuted, conjecture of Leibman (Conjecture~\ref{Leibman:conj}) predicted that the vanishing of the Ces\'aro average alone (in absolute value) implies that $\int_{g(n)Y} F\,d\mu_{g(n)Y}$ vanishes as $n\rightarrow\infty$. This is false. However, the dynamical system $\Omega$ constructed in Lemma~\ref{auxiliary:lemma} will give us a different (weaker) dynamical control on this quantity. 
\begin{theorem}[Dynamical control]\label{control:thm}
    Let $\XX=N/\Lambda$ be a connected nilmanifold, let $\YY=H\Lambda/\Lambda$ where $H\leq N$ is closed connected and rational and let $g:\Z\rightarrow N$ be a polynomial sequence. Let $(\Omega,S)$ be an ergodic nilsystem and suppose that $\eta:\Omega\rightarrow X$ is analytic such that 
    $$\eta(S^n \omega_0) = g(n)\Lambda,$$ for some $\omega_0\in \Omega$ and all $n\in \Z$.
    Then there exists a Borel $\mu_{\Omega}$-null subset $E\subseteq \Omega$ such that for every error term $F$ for $(Y,g)$:  
    if $|n_j|\rightarrow \infty$ and $S^{n_j}\omega_0\rightarrow \omega\not\in E$, then
    $$\lim_{j\rightarrow\infty} \int_{g(n_j)Y} F\,d\mu_{g(n_j)Y}=0.$$
\end{theorem}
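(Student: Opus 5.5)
The plan is to reduce the theorem to Proposition~\ref{approximation:prop}, applied to a family of conjugated subgroups that depends analytically on $\omega\in\Omega$. The null set $E$ will be the union of countably many zero sets of analytic functions.

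\textbf{Step 1: rewrite $g(n)Y$ as a translated subnilmanifold.} Pass to the standard presentation, in which $N^0$ is simply connected and $N=N^0\Lambda$ because $X$ is connected. For $x=h\lambda$ with $h\in N^0$ and $\lambda\in\Lambda$ we have
$$xY=h\,(\lambda H\lambda^{-1})\Lambda=(hJh^{-1})\,h\Lambda,\qquad J:=\lambda H\lambda^{-1}.$$
Since $\lambda$ ranges over a finitely generated group modulo the relevant data, $J$ ranges over a countable family of connected rational subgroups. Consequently $\mu_{xY}$ is the push-forward of $\mu_{J}$ under translation by $h$, and $hJh^{-1}=\exp(\mathrm{Ad}_h\mathfrak j)$ depends analytically on $h$.

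\textbf{Step 2: lift $\eta$ analytically.} Around each point of $\Omega$, take a small simply connected chart and use Lemma~\ref{analyticlift:lem} to lift $\eta$ analytically to $N$. A countable cover suffices.

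A technical point enters here. Along the orbit one must match the lift at $S^{n}\omega_0$ with $g(n)$ up to right multiplication by $\Lambda$, and this only changes $J$ within the countable family above. This is the place where I expect the bookkeeping to be most delicate. If necessary I would enlarge $\Omega$, as in the proof of Lemma~\ref{auxiliary:lemma}, so that it carries the lift $g(n)$ itself.

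\textbf{Step 3: the analytic dichotomy and the definition of $E$.} Fix a chart $U$, a lift $\tilde\eta$ on $U$, and one of the countably many $J$. Let $K$ be the normal closure of $H$, and let $\chi$ range over the countably many horizontal characters of the nilmanifold $K/(K\cap\Lambda)$ (the fibres of $Z$). Consider the analytic map
$$\omega\mapsto d_\chi\circ \mathrm{Ad}_{\tilde\eta(\omega)}\big|_{\mathfrak j}.$$
By Lemma~\ref{zeros}, either this map vanishes identically on the connected set $U$, or its zero set is $\mu_\Omega$-null.

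I claim identical vanishing is impossible for a nonzero $\chi$. The orbit $S^n\omega_0$ is dense, so identical vanishing on $U$ would force $\chi$ to vanish on $g(n)Hg(n)^{-1}$ for infinitely many $n$. Using analyticity and the density assumption, this propagates to all conjugates of $H$, hence to $K$, which is a contradiction.

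Let $E$ be the union of all the null zero sets, over charts, $J$'s and $\chi$'s. This is a countable union, so $E$ is $\mu_\Omega$-null, and it is Borel.

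\textbf{Step 4: convergence off $E$.} Suppose $S^{n_j}\omega_0\to\omega\notin E$. For each fixed $\chi$, non-vanishing at $\omega$ is an open condition, because $x\mapsto d_\chi\circ\mathrm{Ad}_x|_{\mathfrak j}$ is continuous. Hence every nonzero $\chi$ is eventually nonzero on the subgroups $h_{n_j}Jh_{n_j}^{-1}$. Proposition~\ref{approximation:prop}, applied inside the $K$-fibres, then gives
$$\int_{g(n_j)Y}F\,d\mu_{g(n_j)Y}-\int_{g(n_j)Z}F\,d\mu_{g(n_j)Z}\to0.$$
Since the translations are controlled by the lift, this convergence is uniform over Lipschitz $F$ of bounded norm, and extends to continuous $F$ by density.

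\textbf{Step 5: the structured part vanishes for an error term.} It remains to show that $F_s\equiv0$ when $F$ is an error term. Combining \eqref{weakmixing} with Leibman's estimate used in Lemma~\ref{str-err:lem} gives
$$\frac1N\sum_{n=1}^N|F_s(g(n)\Lambda)|\to0.$$
Together with the equidistribution of $g(n)\Lambda$, which follows from the density hypothesis on $g(\Z)Y$, and the continuity of $F_s$, this forces $F_s\equiv0$. Therefore the limit in Step~4 is $0$.

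I expect Step~2 to be the main obstacle, together with ruling out the identically-vanishing case in Step~3: matching the lifts with $g(n)$ across the $\Lambda$-ambiguity, and checking that identical vanishing on a chart genuinely forces $\chi$ to vanish on all of $K$.
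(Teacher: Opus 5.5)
\textbf{Gap 1: Steps 3--4 do not control the sequence.} Along the orbit you write $g(n)=\tilde\eta(\omega_n)\lambda_n$, so the subgroups that actually occur are $J_n=\lambda_nH\lambda_n^{-1}$. Since $\tilde\eta$ is bounded and $g(n)$ is not, infinitely many distinct $J_{n_j}$ occur along any sequence with $|n_j|\to\infty$.

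Your dichotomy treats one fixed $J$ at a time. For $\omega\notin E$ it only gives, for each $J$, a neighbourhood $V_J$ of $\omega$ on which your functional is nonzero. These neighbourhoods may shrink as $J$ varies, and nothing forces $\omega_{n_j}\in V_{J_{n_j}}$. So ``eventually nonzero'' does not follow.

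The functional is also the wrong one. Proposition~\ref{approximation:prop} concerns rational subgroups through the base point, and $\mu_{g(n)Y}$ is the translate by $\tilde\eta(\omega_n)$ of $\mu_{J_n}$. What must be eventually nonzero is therefore $\chi|_{J_n}$, i.e.\ $d_\chi\circ\mathrm{Ad}_{\tilde\eta(\omega_n)^{-1}}\circ\mathrm{Ad}_{g(n)}|_{\mathfrak h}$. Your quantity at $(\omega_n,J_n)$ is instead $d_\chi\circ\mathrm{Ad}_{g(n)}|_{\mathfrak h}$, and the groups $hJh^{-1}$ you test against are not rational.

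The missing idea is to decouple $n$ from $u$. The paper lifts conjugation to the universal cover $L$ of $N^0$. Via Lazard--Leibman it shows that $n\mapsto b(u)^{-1}\widehat{c_{g(n)}}(h)b(u)$ has vanishing $(D+1)$-fold discrete derivatives. Hence $Q_{\chi,h}(n,u)=\eta_\chi\bigl(b(u)^{-1}\widehat{c_{g(n)}}(h)b(u)\bigr)$ is a polynomial in $n$ of degree at most $D$ whose coefficients are analytic in $u$. The null set $E_U$ is the union of the zero sets of the leading coefficients. For $\omega\notin E_U$ one gets $|Q_{\chi,h_\chi}(n_j,\omega_{n_j})|\geq c|n_j|^{d_\chi}$, so $\chi$ is eventually nonzero on $K_{n_j,\omega_{n_j}}=\widehat{c_{\lambda_{n_j}}}(\widetilde H)$. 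In this way finitely many analytic functions per $\chi$ handle all $n$ at once.

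\textbf{Gap 2: you use a density hypothesis the theorem does not have.} The statement has no density hypothesis on $g(\Z)Y$, and the normal closure plays no role in it. It is applied in the step-$(\ell+1)$ case of Theorem~\ref{FK:thm} with $Y$ the diagonal of $\widehat X^{\ell+1}$, where $g(\Z)Y$ is not dense.

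Your Step 3 rules out identical vanishing by ``propagating to $K$'', and your Step 5 uses Leibman's estimate from Lemma~\ref{str-err:lem} together with equidistribution of $g(n)\Lambda$. Both rely on that hypothesis. Without it, the intermediate claim of Step 4 is false: take $g\equiv e$, $\eta$ constant and $H$ not normal. Then $\int_{g(n)Y}F-\int_{g(n)Z}F=\int_YF-\int_ZF$ is a nonzero constant for suitable $F$, whatever $E$ is.

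The paper never compares with $Z$. It proceeds as follows.
\begin{enumerate}
\item It takes $J_U$ to be a rational connected subgroup of minimal dimension containing all $K_{n,u}$, $n\in\Z$, $u\in U$. Minimality alone then gives some $h_\chi$ with $Q_{\chi,h_\chi}\not\equiv0$.
\item It identifies the limit as $\int\Phi_\omega\,d\mu_{J_U}$, where $u\mapsto\Phi_u$ is continuous in the sup norm.
\item It shows this limit vanishes on $U\setminus E_U$ directly from \eqref{weakmixing}. If it had modulus $a>0$ at some $u_0$, then $|\int_{g(n)Y}F\,d\mu_{g(n)Y}|>a/2$ for all large $n$ with $\omega_n$ in a neighbourhood $V$ of $u_0$. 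By unique ergodicity of $\Omega$ this set of $n$ has positive density, contradicting that $F$ is an error term.
\end{enumerate}
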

Informally, the vanishing is determined by the paths of the orbit of $\omega_0$ which end up outside of a null set.

For the purpose of the proof of Theorem~\ref{control:thm} we rely on the key fact that the zero set of a non-zero real analytic function has measure zero. In its simplest form it was established in \cite{Mityagin}, and the general case is established in Appendix~\ref{zero:section}. 
\begin{proof}[Proof of Theorem~\ref{control:thm}]
As before, write $\XX=N/\Lambda$ and let $N^0$ be the connected component of the identity in $N$.

Our first goal is to represent $\XX$ as a quotient of a connected simply connected Lie group. Since $\XX$ is connected, we have $N/\Lambda\cong N^0/\Lambda^0$ as nilmanifolds (analytically), where $\Lambda^0=\Lambda\cap N^0$. Let $p:L\rightarrow N^0$ be the universal cover map and let $\Delta=p^{-1}(\Lambda^0)$. Then $L$ is connected, simply connected, and nilpotent and $\Delta$ is discrete. Moreover, $L/\Delta\cong X$ as nilmanifolds.

Next, we study how this identification is compatible with $\YY$. Recall that $\YY=H\Lambda/\Lambda$ for some connected $H$, hence $H\subseteq N^0$. Let $\widetilde{H}$ be the connected component of the identity of $p^{-1}(H)$, and note that $p(\widetilde{H})=H$. The covering map $p$ therefore induces an isomorphism of nilmanifolds
$$\widetilde{H}/(\widetilde{H}\cap \Delta)\cong H/(H\cap \Lambda).$$ In particular, $\widetilde{H}$ is a rational subgroup of $L$.

Finally, since $N^0$ is the maximal connected component,  the conjugation map $n\mapsto ana^{-1}$ preserves $N^0$. We denote this map by $c_a:N^0\rightarrow N^0$. Since $L$ is simply connected, the map $c_a\circ p:L\rightarrow N^0$ admits a unique lift taking $e$ to itself, denoted by $\widehat{c_a}:L\rightarrow L$. The same uniqueness also shows that $\widehat{c_a}$ is an analytic automorphism and the following holds:
$$p\circ \widehat{c_a} = c_a\circ p,\text{  and  } \widehat{c_{ab}} = \widehat{c_a}\circ \widehat{c_b}.$$ In particular, if $\lambda\in \Lambda$, then $c_\lambda$ preserves $\Lambda^0$ and therefore $\widehat{c_\lambda}$ preserves $\Delta$. Thus, $\widehat{c_\lambda}(\widetilde{H})$ is also rational in $L$. 

Now that we have established and studied the new presentation, we wish to study the polynomial $g$. We adapt the notation of the discrete derivative $\partial_kg(n) = g(n+k)\cdot g(n)^{-1}$ and write $$g(n)=a_1^{p_1(n)}\cdots a_s^{p_s(n)}$$ for some $a_1,\dots,a_s\in N$ and polynomials $p_1,\dots,p_s$  which we assume are all of degree $\leq d$ for some $d\geq 1$. Let $r$ be such that $N$ is $r$-step nilpotent and let $D=rd$. 

Even though $p_i(n)$ is a polynomial of degree $d$, the degree of each component $a_i^{p_i(n)}$ also depends on the place of $a_i$ in the lower central series of $N$, but it is always of degree at most $rd$. It is convenient to follow the notations of Leibman \cite{leibmanpolyseq}: recall that a function $f:\Z\rightarrow G$ is a polynomial with respect to a filtration $G_\bullet=(G_i)_{i\geq 0}$ if $\partial_{k_1}\dots \partial_{k_j} f \in G_{j}$ for all $k_1,\dots, k_j\in \Z$. Following Green and Tao, we define the filtration $N_\bullet=(N_i)_{i\geq 0}$, where $N_0=N$ and $N_j=\gamma_{\lceil j/d\rceil}(N)$ for $j\geq 1$, where $\gamma_i(N)$ is the $i$-th group in the lower central series for $N$. Under this filtration, every component $n\mapsto a_i^{p_i(n)}$ is a polynomial for this filtration (cf., \cite[Proof of Corollary 6.8]{gt1}), and Lazard-Leibman theorem (see e.g., \cite[Proposition 6.2]{gt1}) implies that $g$ is then also a polynomial for this filtration, and for every $h\in \widetilde{H}$, so is $z_h(n)\coloneqq g(n)p(h)g(n)^{-1} $.

We may lift $z_h$ by setting $\widetilde{z_h}(n)=\widehat{c_{g(n)}}(h)$. We show that $\partial_{k_1}\cdots \partial_{k_{D+1}}\tilde{z_h}\equiv e$ for all $k_1,\dots k_{D+1}\in\Z$. To see this, observe that since $\partial_{k_1}\cdots \partial_{k_{D+1}}z_h(n)=e$ we must have $\partial_{k_1}\cdots \partial_{k_{D+1}}\tilde{z_h}(n)\in \ker p$ for all $n$. However, since $\ker p$ is discrete, $\widetilde{H}$ is connected, and the map $h\mapsto \partial_{k_1}\cdots \partial_{k_{D+1}}\widetilde{z_h}(n)$ is continuous and is trivial at $e_{\tilde{H}}$ we conclude,
\begin{equation}\label{Dzh}
    \partial_{k_1}\cdots \partial_{k_{D+1}}\tilde{z_h}\equiv e,
\end{equation} for all $k_1,\dots,k_{D+1}\in \Z$, as required.

Now we take advantage of the fact that $\Omega$ is a real analytic manifold (Lemma~\ref{lem:leibman-orbit} and Proposition~\ref{lieisanalytic:prop}(iii)). Let $\mathcal{U}$ be a finite cover of $\Omega$ by connected simply connected analytic coordinate neighborhoods such that, for each $U\in\mathcal{U}$ the restricted map $\eta|_U$ admits a lift $b:U\rightarrow L$ (Lemma~\ref{analyticlift:lem}). Here the lift is taken through the covering map $L\rightarrow X$. That is, for all $u\in U$ we have
$$p(b(u))\Lambda = \eta(u).$$

Fix a specific pair $(b,U)$. For $n\in \Z$ and $u\in U$ define an automorphism of $L$ by
$$\alpha_{n,u}(h) = b(u)^{-1} \widehat{c_{g(n)}}(h)b(u)$$ and let $K_{n,u}$ denote the image of $\widetilde{H}$ under this automorphism. Conjugating \eqref{Dzh} by $b(u)$ then gives that
\begin{equation}\label{Dah}
    \partial_{k_1}\dots\partial_{k_{D+1}} \left(n\mapsto \alpha_{n,u}(h)\right)=e,
\end{equation}
for all $k_1,\dots, k_{D+1}\in \Z$, $u\in U$ and $h\in \widetilde{H}$.

Let $J_U$ be a rational connected subgroup of $L$ of minimal dimension which contains every $K_{n,u}$ for all $n\in \Z$ and $u\in U$. Set $\Delta_U=J_U\cap \Delta$ and $M_U=J_U/\Delta_U$. Since $J_U\leq L$ is connected, it is also simply connected. Therefore, any horizontal character $\chi$ on $M_U$ admit a unique real-analytic lift, denoted by $\eta_\chi:J_U\rightarrow \mathbb{R}$ with $\eta_\chi(e)=0$ (Lemma~\ref{analyticlift:lem}). In fact, this lift is a homomorphism because $(x,y)\mapsto \eta_\chi(xy)-\eta_\chi(x)-\eta_\chi(y)$ is an integer valued map on a connected space $J_U\times J_U$ which takes the trivial value on $(e,e)$.

Now set $$Q_{\chi,h}(n,u) = \eta_\chi\bigr(\alpha_{n,u}(h)\bigl).$$ Since $\eta_\chi$ is a homomorphism, \eqref{Dah} implies that all $D+1$ derivatives of $Q_{\chi,h}$ vanish. Since it is $\R$-valued it is a genuine polynomial of degree at most $D$. Hence, we have
\begin{equation}\label{Q-rep}
    Q_{\chi,h}(n,u) = \sum_{k=0}^D \binom{n}{k} \partial_1^kQ_{\chi,h}(0,u)
\end{equation}
by the discrete Taylor series (here $\partial_1 Q(n,u) = Q(n+1,u)-Q(n,u)$). Write $q_{\chi,h,k}(u) \coloneqq \partial_1^k Q_{\chi,h}(0,u)$ for the coefficients. From the construction all the coefficients are real analytic, as a finite coefficient of a real analytic functions of $u$. Moreover, suppose that $\chi\not= 0$. We claim that there exists some $h_\chi\in \widetilde{H}$ such that $Q_{\chi,h_\chi}$ is not identically zero. If not, then $\eta_\chi$ would vanish on $\alpha_{n,u}(\widetilde{H})=K_{n,u}$ for all $n,u$, i.e., every $K_{n,u}$ would be contained in $\ker\eta_\chi$. Since $\eta_\chi\circ\exp$ is a linear functional on the Lie algebra of $J_U$ which is integer-valued on $\log\Delta_U$ (as $\chi$ vanishes on $\Delta_U$), the group $\ker\eta_\chi$ is a connected rational subgroup of $J_U$, and it is of lower dimension since $\chi\neq 0$. This contradicts the minimality of $J_U$. Henceforth, we fix such choice $\chi\mapsto h_\chi$ whenever $\chi\not=0$ and write
$$d_\chi = \max \{k : q_{\chi,h_\chi,k}\not\equiv 0\text{ on } U\}.$$ Define $$E_U=\bigcup_{\chi\not=0} \{u\in U: q_{\chi,h_\chi,d_\chi}(u)=0\}.$$
Each constituent is the zero set of a non-zero real analytic function on $U$. Lemma~\ref{zeros} and the fact that there are at most countably many horizontal characters, imply that $E_U$ is $\mu_{\Omega}$-null (and if $J_U=\{e\}$, just take $E_U=\emptyset$).

We will now complete the proof of the theorem. Set $E\coloneqq \bigcup_{U\in \mathcal{U}} E_U$, it is null as a finite union of null sets. Suppose that $|n_j|\rightarrow \infty$ and $S^{n_j}\omega_0\rightarrow \omega\not \in E$ and let $U\in\mathcal{U}$ a neighborhood for $\omega$. For every non-zero $\chi$, $Q_{\chi,h_\chi}$ is non-zero. With the definition of $d_\chi$ we see from \eqref{Q-rep} that $\lim_{j\rightarrow\infty}Q_{\chi,h_\chi}(n_j,\omega_{n_j})/n_j^{d_\chi}\not=0$. Namely, $|Q_{\chi,h_\chi}(n_j,\omega_{n_j})|$ is bounded below by a positive constant multiple of $|n_j|^{d_\chi}$ for all sufficiently large $j$. In particular, $\eta_\chi$ is non-zero on $K_{n_j,\omega_{n_j}}$ for all sufficiently large $j$. From which we immediately conclude that $\chi$ itself does not vanish on $K_{n_j,\omega_{n_j}}$.  

Since $p(b(\omega_n))\Lambda=\eta(\omega_n)=g(n)\Lambda$, we have $\lambda_n\coloneqq p(b(\omega_n))^{-1}g(n)\in \Lambda$, i.e., $g(n)=p(b(\omega_n))\lambda_n$. As $\widehat{c_{g(n)}}=\widehat{c_{p(b(\omega_n))}}\circ\widehat{c_{\lambda_n}}$ and $\widehat{c_{p(b)}}(x)=bxb^{-1}$, we get $$K_{n,\omega_n} = b(\omega_n)^{-1}\widehat{c_{g(n)}}(\widetilde{H})b(\omega_n) = \widehat{c_{\lambda_n}}(\widetilde{H})$$ whenever $\omega_n\in U$.

In particular, $K_{n,\omega_n}$ is connected and a rational subgroup of $L$ (with respect to $\Delta)$. It is also rational with respect to $\Delta_U$ since $K_{n,\omega_n}\cap \Delta_U = K_{n,\omega_n}\cap \Delta$. Applying Proposition~\ref{approximation:prop} with $J_n\coloneqq K_{n,\omega_n}$ whenever $\omega_n\in U$, we see that our assumptions imply that \begin{equation}\label{weaklimit}\mu_{K_{n_j,\omega_{n_j}}}\rightarrow \mu_{J_U}
\end{equation}
weakly.

Let $F\in C(X)$ be an error term for $(Y,g)$ and define 
$$\Phi_u(x\Delta) = F(p(b(u)x)\Lambda),$$ for all $u\in U$. This is well defined because $p(\Delta)\subseteq \Lambda$. Since $b$ is continuous, $L/\Delta$ is compact and $F$ is continuous, the map $u\mapsto \Phi_u$ is continuous from $U$ to $C(L/\Delta)$ with the supremum norm, that is, $\|\Phi_{u'}-\Phi_u\|_\infty\rightarrow 0$ as $u'\rightarrow u$. 

For $\omega_n\in U$, and $h\in \widetilde{H}$ we have
$$p(b(\omega_n)\alpha_{n,\omega_n}(h))\Lambda = g(n)p(h)\Lambda,$$ because $g(n)=p(b(\omega_n))\lambda_n$ and $\lambda_n\in \Lambda$. Moreover, $\alpha_{n,\omega_n} = \widehat{c_{\lambda_n}}$ preserves $\Delta$ and induces an isomorphism
$$\tilde{H}/(\tilde{H}\cap \Delta)\rightarrow J_n/(J_n\cap \Delta).$$
Since this preserves the Haar measure, and since $p$ induces the measure-preserving isomorphism $\widetilde{H}/(\widetilde{H}\cap \Delta)\cong H/(H\cap \Lambda)$ from the beginning of the proof, writing $\mu_{\widetilde{H}}$ for the Haar measure on $\widetilde{H}/(\widetilde{H}\cap\Delta)$ we have
\begin{align*}
\int_{L/\Delta} \Phi_{\omega_n}\,d\mu_{J_n}
&= \int_{\widetilde{H}/(\widetilde{H}\cap\Delta)} F\bigl(p(b(\omega_n)\alpha_{n,\omega_n}(h))\Lambda\bigr)\,d\mu_{\widetilde{H}}(h)\\
&= \int_{\widetilde{H}/(\widetilde{H}\cap\Delta)} F\bigl(g(n)p(h)\Lambda\bigr)\,d\mu_{\widetilde{H}}(h)\\
&= \int_{Y} F(g(n)y)\,d\mu_{Y}(y)
= \int_{g(n)Y} F\,d\mu_{g(n)Y},
\end{align*} for all $n$ such that $\omega_n\in U$. Combining this with \eqref{weaklimit}, the continuity of $u\mapsto \Phi_u$ gives
$$\lim_{j\rightarrow\infty}\int_{g(n_j)Y} F\,d\mu_{g(n_j)Y} = \int_{L/\Delta} \Phi_\omega\,d\mu_{J_U}.$$
We note that the same argument applies with any $u\in U\setminus E_U$ in place of $\omega$: whenever $|n_j|\rightarrow\infty$ and $\omega_{n_j}\rightarrow u\in U\setminus E_U$, we have
\begin{equation}\label{limit-u}
\lim_{j\rightarrow\infty}\int_{g(n_j)Y} F\,d\mu_{g(n_j)Y} = \int_{L/\Delta} \Phi_u\,d\mu_{J_U}.
\end{equation}
It therefore suffices to show that $\int_{L/\Delta} \Phi_u\,d\mu_{J_U}=0$ for every $u\in U\setminus E_U$.

However, since $F$ is the error term, the Ces\'aro average \eqref{weakmixing} vanishes by definition. We claim that from this it follows that $\int_{L/\Delta} \Phi_u\,d\mu_{J_U}$ vanishes on $U$. If not, then there exists some $u_0\in U\backslash E_U$ (since $E_U$ is null) such that $a:=\left|\int_{L/\Delta} \Phi_{u_0}\,d\mu_{J_U}\right| >0$. We claim that there is an open neighborhood $V\subseteq U$ of $u_0$ and $N_0$ sufficiently large so that whenever $n>N_0$ and $\omega_n\in V$ we have \begin{equation}\label{contradiction}\left|\int_{g(n)Y} F\,d\mu_{g(n)Y}\right| > a/2.
\end{equation} To see this we again argue by contradiction. Suppose by contradiction that this is not the case. For every $k\in\N$, let $V_k=U\cap B(u_0,\frac{1}{k})$, where the ball is taken with respect to some metric on $\Omega$. The assumption then produces a sequence $n_k\rightarrow\infty$ with $\omega_{n_k}\in V_k$ (hence $\omega_{n_k}\rightarrow u_0$) such that $\left|\int_{g(n_k)Y} F\,d\mu_{g(n_k)Y}\right|\leq a/2$. On the other hand, the first two conditions give (by \eqref{limit-u}) $\int_{g(n_k)Y} F\,d\mu_{g(n_k)Y}\rightarrow\int_{L/\Delta}\Phi_{u_0}\,d\mu_{J_U}$, whose absolute value is $a>a/2$. This is a contradiction.

Notice that our proof uses two (nested) arguments by contradiction. We have just closed the second one. Hence, we have that the assumption (by contradiction) that $a>0$ implies \eqref{contradiction}, and to complete the proof we need to show that this is impossible. Choose a non-zero continuous function $0\leq \varphi\leq 1$ supported on $V$. Since the Haar measure has full support, $\int _{\Omega}\varphi\, d\mu_{\Omega}>0$. Unique ergodicity then gives
$$\lim_{N\rightarrow\infty}\frac{1}{N}\sum_{n=1}^N\left|\int_{g(n)Y} F\,d\mu_{g(n)Y}\right| \geq \frac{a}{2} \lim_{N\rightarrow\infty}\frac{1}{N}\sum_{n=1}^N \varphi(\omega_n)=\frac{a}{2}\int\varphi\,d\mu_{\Omega}>0$$ which contradicts \eqref{weakmixing}. Thus, $\lim_{j\rightarrow\infty} \int_{g(n_j)Y} F \,d\mu_{g(n_j)Y}=\int_{L/\Delta} \Phi_\omega\,d\mu_{J_U} = 0,$
as required.
\end{proof}

We can now complete the proof of Theorem~\ref{leibman:thm} by combining everything together. Let $\XX,\YY,G,H,\Gamma,\pi,g$ and $Z$ be as in the theorem. Let $F$ be a continuous function on $\XX$ and $$b_f(n)=\int_{g(n)Y} F\, d\mu_{g(n)Y} - \int_{g(n)Z} F\,d\mu_{g(n)Z}.$$ By Lemma~\ref{str-err:lem} we have that 
\begin{equation}\label{error}
b_f(n) = \int_{g(n)Y} F_e\, d\mu_{g(n)Y}
\end{equation}
where $F_e$ is the error component of $F$. By Theorem~\ref{control:thm} and Lemma~\ref{control:lem} we deduce that $b_f\in \GenNz_s$, where $s$ is the step of the auxiliary nilsystem $\Omega$. This completes the proof. $\qed$

\section{Proof of Theorem~\ref{FK:thm}}
We first explain why the arguments from before can not be applied directly. First, observe that the nilsystem in Leibman's theorem is connected. If we assume that $\XX$ is connected we can try to write Theorem~\ref{FK:thm} in the language where Theorem~\ref{leibman:thm} applies via the following construction: Let $X=(G/\Gamma,\mu_{G/\Gamma},R_\alpha)$ for some $r$-step nilpotent group $G$ and let 
$$a(n) = \int_X \prod_{i=0}^{\ell} f_i(\alpha^{in}x)\,d\mu_{G/\Gamma}(x).$$
for some $\ell\geq 0$, and bounded functions $f_0,\dots,f_{\ell}$. Let $W=X^{\ell+1}$, $D=\{(x,x,\dots,x)\in W : x\in X\}$ be its diagonal and $F(x_0,\dots,x_{\ell})=\prod_{i=0}^{\ell}f_i(x_i)$ be the tensor function on $W$. Then, for the (linear) polynomial map $g(n)=a^n$, where $a=(e,\alpha,\dots,\alpha^{\ell})\in G^{\ell+1}$ we have
$$a(n) =\int_{g(n)D} F\, d\mu_{g(n)D}.$$
Unfortunately, applying Lemma~\ref{auxiliary:lemma} directly, even though $g$ is linear, we would obtain the nilsystem $\Omega$ whose step is $r$, which might be larger than $\ell$ in general. Instead, we shall rely on the mechanism of \emph{degree lowering} as was introduced by Frantzikinakis and Kuca (see e.g., \cite{FranKuca}). When $r\leq \ell$ and $X$ is connected, however, this direct approach already proves Theorem~\ref{FK:thm}. Indeed, for continuous $f_0,\dots,f_\ell$, Theorem~\ref{leibman:thm}, applied in the closure of $g(\Z)D$ (the closure is connected by \cite[Corollary~6.5]{LeibmanOrbit}), gives $a(n)=\int_{g(n)Z}F\,d\mu_{g(n)Z}+b_F(n)$. The first term is a basic $r$-step nilsequence, and $b_F\in\GenNz_r\subseteq\GenNz_\ell$, since for linear $g$ Lemma~\ref{auxiliary:lemma} gives $\Omega$ of step $r$. For bounded measurable $f_i$, choose continuous $f_i^{(m)}$ with $\|f_i^{(m)}\|_\infty\leq\|f_i\|_\infty$ and $f_i^{(m)}\rightarrow f_i$ in $L^2(\mu_{G/\Gamma})$. Let $F^{(m)}=\prod_i f_i^{(m)}(x_i)$, and write $a^{(m)}=s^{(m)}+e^{(m)}$ for the decomposition above, with $s^{(m)}(n)=\int_{g(n)Z}F^{(m)}\,d\mu_{g(n)Z}$. Write the closure of $g(\Z)D$ as $P=Q/(Q\cap\Gamma^{\ell+1})$ and $Z=K\cdot e_W$, where $K$ is the normal closure of $\Delta(G^0)$ in $Q$, so that $\mu_{g(n)Z}=g(n)_*\mu_Z$ with $\mu_Z$ the $K$-invariant probability measure on $Z$. For $0\leq i\leq\ell$ let $\mathrm{p}_i:W\rightarrow X$ be the $i$-th coordinate projection. Then $(\mathrm{p}_i)_*\mu_Z$ is invariant under the $i$-th coordinate projection of $K$, which contains $G^0$. Since $X$ is connected, $G^0$ acts transitively on $X$, so $(\mathrm{p}_i)_*\mu_Z=\mu_{G/\Gamma}$, and hence $(\mathrm{p}_i)_*\mu_{g(n)Z}=(\alpha^{in})_*\mu_{G/\Gamma}=\mu_{G/\Gamma}$ for every $n$. With $C=\max_i\|f_i\|_\infty$, telescoping gives $|F^{(m)}-F^{(m')}|\leq C^{\ell}\sum_{i=0}^{\ell}|f_i^{(m)}-f_i^{(m')}|\circ\mathrm{p}_i$, and therefore
$$\sup_{n\in\Z}\bigl|s^{(m)}(n)-s^{(m')}(n)\bigr|\leq C^{\ell}\sum_{i=0}^{\ell}\bigl\|f_i^{(m)}-f_i^{(m')}\bigr\|_{L^1(\mu_{G/\Gamma})}\xrightarrow[m,m'\rightarrow\infty]{}0.$$
The same estimate for $\mu_{g(n)D}$, whose marginals are also $\mu_{G/\Gamma}$, shows that $a^{(m)}\rightarrow a$ uniformly. Hence $s^{(m)}\rightarrow s$ uniformly for some $s$, and $e^{(m)}=a^{(m)}-s^{(m)}\rightarrow e\coloneqq a-s$ uniformly. Each $s^{(m)}$ is a basic $r$-step nilsequence, so $s\in\NN_r\subseteq\NN_\ell$. Each $e^{(m)}$ is controlled by the system $(\Omega,\omega_0)$ that Lemma~\ref{auxiliary:lemma} gives for $P$ and $g$, with a null set independent of $m$ (Theorem~\ref{control:thm}). Hence $e$ is controlled by $(\Omega,\omega_0)$, and so $e\in\GenNz_\ell$ by Lemma~\ref{control:lem}. If $\XX$ is not connected, the same conclusion follows by passing to the connected components as in Section~\ref{sec:connectedness} below.
\subsection{Resolving Theorem~\ref{FK:thm} under two assumptions}
Let $\ell\geq 1$. In this section, we prove Theorem~\ref{FK:thm} for this $\ell$, under the following additional assumptions: we assume throughout that $\XX=(G/\Gamma,\mathcal{B},\mu_{G/\Gamma},R_\alpha)$ is connected and of step $\ell+1$. Then we fix any bounded $f_0,\dots,f_{\ell}\in L^\infty(\mu_{G/\Gamma})$.
\begin{definition}[Nilcharacter]
    A continuous function $f\in C(X)$ is called a \emph{nilcharacter} with frequency $\chi$ if $f(ux)=\chi(u)f(x)$ for all $u\in G_{\ell+1}$. Here $\chi:G_{\ell+1}\rightarrow S^1$ is a character which vanishes on $\Gamma\cap G_{\ell+1}.$
\end{definition}
It is well known that the algebra generated by the nilcharacters is dense in $C(X)$, and also in $L^2(X)$. As a result we may approximate our functions with linear combinations of nilcharacters and so it suffices to prove the theorem in the case where all the functions $f_0,\dots,f_{\ell}$ are nilcharacters. Also, if $f_1,\dots,f_\ell$ all have trivial frequency, then $a$ is a multiple correlation sequence on the connected $\ell$-step nilsystem $Z_\ell(X)$, treated above. We may therefore assume that some $f_i$ with $1\leq i\leq\ell$ has non-trivial frequency.

Assume throughout that $G=\langle G^0,\alpha\rangle$ (without loss of generality). Consider the space $\widehat{X} = (G\times G)/D(\Gamma\times \Gamma)$, where $D = \{(g,g):g\in G_{\ell+1}\}$ is the diagonal subgroup and equip $\widehat{X}$ with the Haar measure $\widehat{\mu}$ and the action $\widehat{T}$ induced by $R_\alpha\times R_\alpha$. Since each $f_i$ is a nilcharacter, the map $\widehat{f}_i(x,y) =f_i(x)\overline{f_i(y)}$ is well defined on $\widehat{X}$. Moreover, for $a(n)$ as above, we then have
$$|a(n)|^2 = \int_{\widehat{X}} \prod_{i=0}^{\ell} \widehat{f_i}(\widehat{T}^{in}x)\,d\widehat{\mu}(x).$$

Next, we embed $\imath:Z_{\ell}(X)\rightarrow \widehat{X}$ by $\imath(gG_{\ell+1}\Gamma) = (g\Gamma,g\Gamma)\mod D.$ Under this identification we get:
\begin{equation}\label{identification}
    \widehat{T}^n(e_{\widehat{X}}) = \imath(T_{\ell}^ne_{Z_{\ell}(X)}),
\end{equation}
for all $n\in \Z$, where $T_{\ell}$ is the induced action of $T$ on $Z_{\ell}(X)$. Now set
$$p_n=(T_\ell^n e_{Z_{\ell}(X)},T_\ell^{2n}e_{Z_{\ell}(X)},\dots, T_\ell^{\ell n}e_{Z_{\ell}(X)}),\quad \Omega=\overline{\{p_n:n\in\Z\}}\subseteq Z_{\ell}(X)^{\ell}.$$

Now, we apply the same argument as in the proof of Theorem~\ref{leibman:thm} with the nilmanifold $M=\widehat{X}^{\ell+1}$, the sub-nilmanifold $Y$ the diagonal in $M$ (i.e., $Y=\{(x,x,\dots,x)\in M : x\in \widehat{X}\}$) and the linear map $g(n)=(e,\widehat{\alpha}^n,\dots,\widehat{\alpha}^{\ell n})$, where $\widehat{\alpha}=(\alpha,\alpha)\in G\times G$ is the element inducing $\widehat{T}$. For $F(x_0,\dots,x_{\ell})=\prod_{i=0}^\ell \widehat{f_i}(x_i)$ we get:
$$|a(n)|^2 = \int_{g(n)Y} F\, d\mu_{g(n)Y}.$$
Here $\Omega\subseteq Z_{\ell}(X)^{\ell}$ is an ergodic $\ell$-step nilsystem (Lemma~\ref{lem:leibman-orbit}), and Theorem~\ref{control:thm} applies with $\eta=(\imath(e),\imath,\dots,\imath)$ by \eqref{identification} and with $F$ itself, which is an error term for $(Y,g)$ by \cite{BHK} whenever some $f_i$ with $1\leq i\leq\ell$ has a non-trivial frequency. Combining this with Lemma~\ref{control:lem} as in the proof of Theorem~\ref{leibman:thm}, the error term lies in $\GenNz_{\ell}$, as desired. 
\subsection{Removing the connectedness assumption}\label{sec:connectedness}
Suppose the step of $\XX$ is still $\ell+1$, and recall we have already established Theorem~\ref{FK:thm} for all connected nilsystems of this step. Let $\XX^0$ denote the connected component of the identity in $\XX$, by ergodicity $T$ acts cyclically on the connected components we may write 
$$\XX = \bigsqcup_{j=0}^{q-1} T^j \XX^0$$ where $q$ is the number of connected components in $\XX$. Let $\mu^0$ denote the normalized Haar measure on $\XX^0$ and let $S:\XX^0\rightarrow \XX^0$ denote the action $T^q$ induces on $\XX^0$. Then $(\XX^0,\mu^0,S)$ is an ergodic nilsystem, and we can also write $\XX^0 = G^0\Gamma/\Gamma$, where $G^0$ is the connected component of the identity in $G$. Let $a(n)$ be as above, and observe that $a(qn+r)$ for $0\leq r<q$ is a multiple correlation sequence on a connected component $T^r\XX^0$, pulling everything to $\XX^0$ gives
$$a(qn+r) = \frac{1}{q}\sum_{j=0}^{q-1} a_{j,r}(n),$$ where 
$$a_{j,r}(n)  = \int_{X^0} \prod_{i=0}^\ell S^{in} f_i^{j,r}\,d\mu^0,$$ where $f_i^{j,r}$ is the restriction of $T^{j+ir}f_i$ to $\XX^0$.

Now, all the multiple correlation sequences lives on $\XX^0$. We also need compatibility of the factors. By a classical result of Leibman \cite[Appendix A]{leibman-hkz} the Host--Kra factor of $T$ and $T^q$ coincide and $Z_{\ell}(X^0,S) = Z_\ell (X,T)^0.$ Therefore, if $h_i=\mathbb{E}(f_i\mid Z_{\ell}(X,T))$ then 
$$\mathbb{E}_{\mu^0}(f_i^{j,r}\mid Z_{\ell}(X^0,S))=(T^{j+ir}h_i)|_{X^0}.$$ Hence we get that $b(qn+r) = \frac{1}{q}\sum_{j=0}^{q-1}b_{j,r}(n)$ where $b=a-a_{\ell}$, $a_{\ell}(n)\coloneqq\int_X\prod_{i=0}^{\ell}h_i(T^{in}x)\,d\mu(x)$, and $b_{j,r}$ defined similarly to $a_{j,r}$.

Now we slightly modify the system from Lemma~\ref{auxiliary:lemma}. Let $M=Z_{\ell}(\XX)$, let $R$ denote the action induced by $T$, let $x_0:=\pi_\ell(e_X)$ where $\pi_\ell:\XX\rightarrow Z_\ell(\XX)$ is the factor map and consider the orbit $\omega_n:=(R^n x_0,R^{2n}x_0,\dots,R^{\ell n}x_0,n\mod q,n\theta)\in M^\ell\times \Z/q\Z\times \T.$ Let $\Omega=\overline{\{\omega_n:n\in \Z\}}$ be the orbit closure under the action 
$$T_\Omega = R\times R^2\times\dots\times R^{\ell}\times (+1)\times \theta.$$
By the same argument in the proof of Lemma~\ref{auxiliary:lemma}, $\Omega$ is an $\ell$-step nilsystem. Let $\Omega_0 =\overline{\{\omega_{qn}:n\in \Z\}}$, and $\Omega_r = T_\Omega^r \Omega_0$ for all $0\leq r <q$. Each $\Omega_r$ is clopen and of measure $\frac{1}{q}$. The maps $(z_1,\dots,z_\ell,0,t)\mapsto(z_1,\dots,z_\ell)$ and $(z_1,\dots,z_\ell,0,t)\mapsto((e,z_1,\dots,z_\ell),t)$ are factor maps from $(\Omega_0,T_\Omega^q)$ onto the systems used in the previous subsection for $(\XX^0,S)$ (taking the rotation $q\theta$ in Lemma~\ref{auxiliary:lemma}), so the null sets obtained there pull back to null subsets of $\Omega_0$. Hence, from the same arguments as before, for each tuple $(f_i^{j,r})_{i=0}^\ell$, we can find a $\mu_{\Omega_0}$-null set $E^{j,r}$. Taking $E_0\coloneqq\bigcup_{j,r} E^{j,r}\subseteq \Omega_0$, we see that whenever $|m_k|\rightarrow\infty$ and $\omega_{qm_k}\rightarrow\omega\in \Omega_0\backslash E_0$ we have $b_{j,r}(m_k)\rightarrow 0$ as $k\rightarrow\infty$ for all $j,r$ simultaneously.  Taking $E\coloneqq \bigcup_{r=0}^{q-1} T_{\Omega}^r E_0$ and let $|n_k|\rightarrow \infty$ be so that $\omega_{n_k}\rightarrow \omega\in \Omega\backslash E$ (this does not have to be the same $\omega$ from before). Then, write $n_k = qm_k + r_k$ for some $0\leq r_k<q$ and observe that we can have convergence only if $r_k=r$ eventually for some fixed $r$. Hence, $\omega_{qm_k}\rightarrow T_\Omega^{-r}\omega\not\in E_0$ and the previous argument gives that $b(n_k)\rightarrow 0$, as required. Applying the same arguments as before (Lemma~\ref{control:lem} and Theorem~\ref{control:thm}) gives the desired result. It remains to show that $a\in\NN_\ell+\GenNz_\ell$. We argue for $a$ directly, so that now the argument applies to any ergodic nilsystem $\XX$ of step at most $\ell+1$, and in particular to disconnected nilsystems of step at most $\ell$. By the connected case (the beginning of this section, and the previous subsection), each $a_{j,r}$ decomposes as $a_{j,r}=s_{j,r}+e_{j,r}$, where $s_{j,r}\in\NN_\ell$ and $e_{j,r}\in\GenNz_\ell$ is controlled by a null subset of $\Omega_0$ as above. Put $s(qn+r)=\frac{1}{q}\sum_{j=0}^{q-1}s_{j,r}(n)$ and $e(qn+r)=\frac{1}{q}\sum_{j=0}^{q-1}e_{j,r}(n)$, so that $a=s+e$. The error part $e$ is reassembled through $\Omega$ exactly as $b$ above, so $e\in\GenNz_\ell$. For the nilsequence part we use that if $c\in\ell^\infty(\Z)$ and $n\mapsto c(qn+r)$ belongs to $\NN_\ell$ for every $0\leq r<q$, then $c\in\NN_\ell$. For basic nilsequences this is \cite[Lemma~3.4]{Leibman1}: since all the sequences here are linear, the resulting polynomial nilsequence has degree $1$, so it is a basic $\ell$-step nilsequence by \cite[Proposition~3.3]{Leibman1}. For the general case, consider the map $I:(\ell^\infty(\Z))^q\rightarrow\ell^\infty(\Z)$, $I(c_0,\dots,c_{q-1})(qn+r)=c_r(n)$, which is linear and satisfies $\|I(c_0,\dots,c_{q-1})\|_\infty=\max_r\|c_r\|_\infty$. If every $c_r$ is a finite linear combination $c_r=\sum_{k=1}^K\lambda_{r,k}\psi_{r,k}$ of basic $\ell$-step nilsequences (with a common $K$, padding with $\psi=0$), then
$$I(c_0,\dots,c_{q-1})=\sum_{k=1}^K I(\lambda_{0,k}\psi_{0,k},\dots,\lambda_{q-1,k}\psi_{q-1,k}),$$
and each summand involves basic $\ell$-step nilsequences, so it lies in $\NN_\ell$ by the basic case. For arbitrary $c_r\in\NN_\ell$, choose such finite combinations $c_r^{(m)}\rightarrow c_r$ uniformly. Then $I(c^{(m)})\rightarrow I(c)$ uniformly, and $I(c)\in\NN_\ell$ since $\NN_\ell$ is closed. Hence $s\in\NN_\ell$, and so $a\in\NN_\ell+\GenNz_\ell$.
\subsection{Removing the step assumption via degree lowering}
In the previous section we have established Theorem~\ref{FK:thm} in the case where $\XX$ is an $(\ell+1)$-step nilsystem. Every step $r\leq \ell$ was treated at the beginning of this section. It is left to establish Theorem~\ref{FK:thm} for all steps. For this purpose, we adapt the \emph{degree lowering} mechanism from \cite{FranKuca}. We follow the terminology of Frantzikinakis and Kuca: let $\YY=(Y,\mathcal{C},\nu,T)$ be any system. We denote by $\|\cdot\|_{U^s(Y)}$ the Host--Kra seminorms on this system. Let $s\geq 1$, we say that sequences $q_1,\dots,q_\ell:\N\rightarrow \Z$ are \emph{good for degree $s$ seminorm control} for $\YY$ if 
$$\lim_{N\rightarrow\infty} \frac{1}{N} \sum_{n=1}^N T^{q_1(n)}f_1\cdots T^{q_{\ell}(n)}f_\ell =0$$ whenever there exist some $1\leq i \leq \ell$ with $\|f_i\|_{U^s(\YY)}=0.$ In other words, the limit behavior of the multiple ergodic average is determined by the projection of the function onto $Z_{s-1}(\YY)$. We say that $q_1,\dots,q_{\ell}:\N\rightarrow \Z$ are \emph{good for seminorm control} if they are good for degree $s$ seminorm control for some $s\in \N$. Let $d\geq 1$, we say that $q_1,\dots,q_{\ell}$ are \emph{good for $d$-step reduction} for $\YY$ if for every $Z_d(\YY)$-measurable bounded functions $f_1,\dots f_{\ell}$, we have
$$\lim_{N\rightarrow\infty} \frac{1}{N}\sum_{n=1}^N T^{q_1(n)}f_1\dots T^{q_{\ell}(n)}f_{\ell}=0$$ whenever $\|f_i\|_{U^d(\YY)}=0$ for some $1\leq i\leq \ell$ (recall that $\|f_i\|_{U^d(\YY)}=0\iff f_i\perp Z_{d-1}(\YY)$). The following was established in \cite[Theorem~3.11 and Proposition~6.1]{FranKuca}.
\begin{proposition}[Degree lowering]\label{degreelowering:thm}
Let $\YY=(Y,\mathcal{C},\nu,T)$ be a measure-preserving system. Let $\ell,d\geq 1$ and suppose that the sequences $q_1,\dots,q_{\ell}:\N\rightarrow \Z$ are good for seminorm control and $d$-step reduction. Then $q_1,\dots,q_{\ell}$ are good for degree $d$ seminorm control.
\end{proposition}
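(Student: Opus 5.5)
The plan is to argue by downward induction on the degree of seminorm control. By hypothesis $q_1,\dots,q_\ell$ are good for degree $s$ seminorm control for some $s$. If $s\le d$ we are done, since $\|f\|_{U^d}\ge\|f\|_{U^s}$ for $s\le d$, so vanishing of $\|f_i\|_{U^d}$ forces vanishing of $\|f_i\|_{U^s}$. So assume $s>d$. It suffices to show that degree $s$ control implies degree $s-1$ control whenever $s-1\ge d$; iterating this step brings $s$ down to $d$.

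\emph{Step 1 (reduction to the factor).} Fix $f_1,\dots,f_\ell$ with $\|f_i\|_{U^{s-1}}=0$ for some $i$. Write $f_j=\mathbb{E}(f_j\mid Z_{s-1})+(f_j-\mathbb{E}(f_j\mid Z_{s-1}))$ for every $j$ and expand the average by multilinearity. Each term containing a component orthogonal to $Z_{s-1}$ vanishes by degree $s$ control. Hence we may assume all $f_j$ are $Z_{s-1}$-measurable. Passing to an ergodic decomposition, we may further use that $Z_{s-1}$ is an inverse limit of $(s-1)$-step nilsystems. Approximating in $L^2$ and again using degree $s$ control to absorb the errors, we may assume $f_j$ are functions on an $(s-1)$-step nilsystem $G/\Gamma$. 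Via vertical Fourier analysis along $G_{s-1}$, we may also assume they are nilcharacters with frequencies $\chi_j$ on $G_{s-1}$. In this setting $\|f_i\|_{U^{s-1}}=0$ means that $f_i$ has a nontrivial frequency $\chi_i$.

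\emph{Step 2 (dual functions and the product trick).} Suppose the $L^2$-limit of the averages is nonzero. Pairing with a suitable bounded $f_0$ and using Cauchy--Schwarz as in Frantzikinakis--Kuca, $f_i$ correlates with a dual-type function $\mathcal{D}=\lim_N\frac1N\sum_{n} T^{-q_i(n)}\bigl(f_0\prod_{j\neq i}T^{q_j(n)}\bar f_j\bigr)$. The key claim is that we can replace $f_i$ by a function of strictly lower degree. Here I would apply the product construction used in the previous subsection: the product $f_j(x)\overline{f_j(y)}$ on $X\times X$ is invariant under the diagonal $G_{s-1}$, so it descends to a system in which these functions are measurable with respect to a factor of step at most $s-2$. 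This introduces an averaging parameter $h$ via the inductive formula $\|f\|_{U^s}^{2^s}=\lim_H\frac1H\sum_h\|\Delta_hf\|_{U^{s-1}}^{2^{s-1}}$. The nonvanishing of the correlation, together with degree $s$ control applied in the product system, should yield for many $h$ a nonzero average in which $\Delta_h f_i=T^hf_i\cdot\bar f_i$ appears. Since $\Delta_h f_i$ has trivial top frequency, this is a lower-degree object.

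\emph{Step 3 (closing with $d$-step reduction).} Iterating Step 2 repeatedly lowers the degree of the functions involved until they are $Z_d$-measurable. At that point the $d$-step reduction hypothesis gives vanishing whenever the relevant function has zero $U^d$ seminorm. Propagating this back through the iterations contradicts the nontrivial frequency $\chi_i$, since for nilcharacters the $U^{s-1}$-orthogonality is exactly detected by the frequency.

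The main obstacle is Step 2: making the passage from a nonvanishing average with a top-frequency nilcharacter to a nonvanishing average of strictly lower degree rigorous. This requires bookkeeping so that the lowered averages still fall under the \emph{same} seminorm control and $d$-step reduction hypotheses, applied in product or ergodic-component systems. This is precisely the content of Frantzikinakis--Kuca's Theorem~3.11 and Proposition~6.1, and I would follow their smoothing argument there rather than reconstruct it.
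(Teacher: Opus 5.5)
The paper does not prove this proposition. It is quoted from \cite[Theorem~3.11 and Proposition~6.1]{FranKuca}, and your proposal ultimately rests on the same citation, so as a proof by reference it agrees with the paper.

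The outline you build around the citation, however, does not work as a self-contained argument. The hypotheses ``good for seminorm control'' and ``good for $d$-step reduction'' are statements about the averages $\frac1N\sum_n T^{q_1(n)}f_1\cdots T^{q_\ell(n)}f_\ell$ on the one fixed system $\YY$. In the application they are checked only for the given nilsystem, with a subsequence $n_{j_k}$ chosen for it. Your Steps~1--3 invoke them in places where they are not assumed:
\begin{itemize}
\item on ergodic components, to use the Host--Kra structure theorem, although $\YY$ is not assumed ergodic and the hypotheses are not available componentwise;
\item on $\YY\times\YY$ or its quotient by a diagonal $G_{s-1}$-action (``degree $s$ control applied in the product system'');
\item on averages carrying $\Delta_hf_i$ and an extra parameter $h$, which are not of the form $d$-step reduction speaks about.
\end{itemize}
So the ``propagating back'' in Step~3 has nothing to stand on. The product trick you borrow from the proof of Theorem~\ref{FK:thm} works there only because $\XX$ is an explicit nilsystem and Theorem~\ref{control:thm} is available. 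It does not come from any abstract hypothesis on averages.

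It helps to locate the difficulty precisely. Your Step~1 already settles the last step $s=d+1\to d$ with no further input. After projecting onto $Z_d$, all functions are $Z_d$-measurable and $\mathbb{E}(f_i\mid Z_d)\perp Z_{d-1}$, so $d$-step reduction applies verbatim. The real content of the proposition is the step $s\to s-1$ with $s-1>d$. There you hold $Z_{s-1}$-measurable functions but only know something about $Z_d$-measurable ones. The missing idea is a mechanism that lowers the degree while keeping every average that appears of the original form on $\YY$. Your dual function $\mathcal{D}$ in Step~2 is a sensible starting point, since it lives on $\YY$. The subsequent differencing must then stay inside $\YY$ rather than move to a product system, and this is what the degree-lowering argument of \cite{FranKuca} supplies. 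Since that is exactly the part you defer, what you have is the paper's citation together with a heuristic that applies the hypotheses outside the system where they are assumed. It should not be presented as an outline of the proof.
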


Now let $\XX$ be an ergodic $k$-step nilsystem, as in Theorem~\ref{FK:thm}, here with $k>\ell+1$. Set $\XX'=Z_{\ell+1}(\XX)$ and $W=Z_{\ell}(\XX)$. Let $\omega_n=(T_W^n e_W,\dots ,T_W^{\ell n}e_W,n\mod q, n\theta)$ and $\Omega = \overline{\{\omega_n:n\in \Z\}}\subseteq W^{\ell} \times \Z/q\mathbb{Z}\times \T$ as in the previous section. Applying the argument from the previous section to the $(\ell+1)$-step system $\XX'$, we can find a $\mu_{\Omega}$-null set $E\subseteq \Omega$ such that whenever $h_0,\dots, h_{\ell}$ are bounded and $\XX'$-measurable and one of which is orthogonal to $W$ we have:
\begin{equation}\label{hypothesis}|n_j|\rightarrow\infty\, \land\, \omega_{n_j}\rightarrow\omega\in \Omega\backslash E\Longrightarrow \lim_{j\rightarrow\infty}\int_{X'} h_0(x)\prod_{i=1}^{\ell} T^{in_j}h_i\,d\mu_{X'} =0.
\end{equation}
Crucially, the choice of $E$ is independent of $h_0,\dots,h_{\ell}$. To accomplish Theorem~\ref{FK:thm}, using the same arguments as before (Lemma~\ref{control:lem} and Theorem~\ref{control:thm}), we need to show that the same holds for arbitrary bounded $\XX$-measurable functions.

Fix $f_0,\dots f_{\ell}\in L^\infty(\mu)$, and suppose that for some $1\leq i_0\leq \ell$ we have $f_{i_0}\perp W$. Write $$a(n) = \int_X f_0(x)\cdot\prod_{i=1}^\ell f_i(T^{in}x)\,d\mu(x).$$ Let $\Omega,E$ exactly as constructed above and suppose by contradiction that there exists a sequence $|n_j|\rightarrow\infty$ and $\omega_{n_j}\rightarrow \omega\in\Omega\backslash E$ such that 
$a(n_j)$ fails to converge to zero. Passing to a subsequence, we may assume that \begin{equation}\label{contradiction2}
a(n_j)\rightarrow L\not=0.
\end{equation}

We wish to apply Theorem~\ref{degreelowering:thm}. However, unfortunately, our assumption in \eqref{hypothesis} only gives weak convergence of $\prod_{i=1}^{\ell} T^{in_j}h_i$, whereas we need norm convergence. The following lemma is therefore useful.
\begin{lemma}[From weak convergence to norm convergence of averages]
\label{lem:weak-to-norm}
Let $\XX=(X,\mathcal{B},\mu,T)$ be an invertible measure-preserving system, let $\ell\geq 1$, and let $|n_j|\rightarrow\infty$. Suppose that $h_1,\dots h_{\ell}$ are bounded functions satisfying 
$$\prod_{i=1}^\ell T^{in_j}h_i\rightarrow 0$$ weakly (i.e., the assumption in \eqref{hypothesis} holds). Then, there exists a subsequence $j_k\rightarrow\infty$ such that
$$\lim_{N\rightarrow\infty} \frac{1}{N}\sum_{k=1}^N \prod_{i=1}^\ell T^{in_{j_k}}h_i = 0$$ in norm.

Moreover, if the weak convergence hypothesis holds for each
tuple in a countable family
$$\mathcal{D} = \{(h_1^{(r)},\dots,h_\ell^{(r)}): r\geq 1\}\subseteq\bigr(L^\infty(\mu)\bigl)^\ell,$$
then the same subsequence $j_k$ can be chosen for the norm convergence. 
\end{lemma}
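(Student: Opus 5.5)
The plan is to build the subsequence so that the averaged sequence $v_k \coloneqq \prod_{i=1}^\ell T^{in_{j_k}}h_i$ is almost orthogonal, in the spirit of the Banach--Saks theorem (every weakly null sequence in a Hilbert space has a subsequence whose Ces\`aro means converge in norm). Put $u_j\coloneqq\prod_{i=1}^\ell T^{in_j}h_i\in L^2(\mu)$. These are uniformly bounded, say $\|u_j\|_{L^2}\leq\|u_j\|_\infty\leq C$, and by hypothesis $u_j\rightarrow 0$ weakly. We choose $j_1<j_2<\dots$ inductively. Having chosen $j_1,\dots,j_{k-1}$, weak convergence applied to the finitely many vectors $v_1,\dots,v_{k-1}$ lets us pick $j_k>j_{k-1}$ with $|\langle u_{j_k},v_m\rangle|\leq 2^{-k}$ for all $m<k$.

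Expanding the square of the average then gives
$$\Bigl\|\frac1N\sum_{k=1}^N v_k\Bigr\|_{L^2}^2\leq \frac{1}{N^2}\Bigl(NC^2+2\sum_{k=1}^N (k-1)2^{-k}\Bigr)\leq \frac{C^2+2}{N}\rightarrow 0 .$$
This is the claimed norm convergence. Boundedness means $L^2$ convergence is equivalent to convergence in every $L^p$ with $p<\infty$, so nothing more is needed there. Notice that we only use weak convergence, not any structure of $T$; invertibility simply makes $T^{in_j}$ meaningful for negative $n_j$.

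For the countable family $\mathcal D$, write $u_j^{(r)}$ and $v_k^{(r)}$ for the corresponding sequences attached to the $r$-th tuple. At step $k$ we impose the orthogonality condition only for the first $k$ tuples: choose $j_k$ so that $|\langle u^{(r)}_{j_k},v^{(r)}_m\rangle|\leq 2^{-k}$ for all $m<k$ and all $r\leq k$. This is still a finite collection of weak-convergence conditions, so such $j_k$ exists. For a fixed $r$, the pairs $(k,m)$ with $k\leq r$ are finitely many, at most $r^2$, each bounded by $C_r^2$. Therefore
$$\Bigl\|\frac1N\sum_{k=1}^N v_k^{(r)}\Bigr\|_{L^2}^2\leq \frac{NC_r^2+2r^2C_r^2+2\sum_{k}k2^{-k}}{N^2}\rightarrow 0 .$$
This gives a common subsequence for all $r$, which is a diagonal argument.

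The main obstacle is only bookkeeping in this diagonalization: the orthogonality tolerances must be summable uniformly in $r$, while each tuple is allowed finitely many uncontrolled early terms. The choice $2^{-k}$ together with "$r\leq k$" handles both, so I expect no genuine difficulty.
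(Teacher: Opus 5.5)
Your proposal is correct and is essentially the paper's own argument. Both proofs make a Banach--Saks-type inductive choice of $j_k$ with $|\langle u^{(r)}_{j_k},v^{(r)}_m\rangle|\leq 2^{-k}$ imposed for all $m<k$ and $r\leq k$, then expand the square of the average and bound the finitely many uncontrolled early cross terms trivially by the $L^\infty$ bound.
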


\begin{proof}
Since the claim for the countable-family assertion is stronger than the one for the single tuple, we shall only prove the former. For the sake of notation simplicity write $P_j^{(r)}\coloneqq \prod_{i=1}^\ell T^{in_j}h_i^{(r)}.$ Furthermore, by normalizing we shall assume that all the functions satisfy $\|h_i^{(r)}\|_{\infty}\leq 1.$ In particular, $$\|P_j^{(r)}\|_{2}\leq \|P_j^{(r)}\|_\infty\leq 1.$$ By assumption, $P_j^{(r)}\rightarrow 0$ weakly as $j\rightarrow\infty$ for all $r$.

From the uniform boundedness and the compactness of the unit ball in the weak$^*$-topology, we can construct inductively a sequence $j_k\rightarrow\infty$ such that
$$\left|\langle P^{(r)}_{j_k},P^{(r)}_{j_m}\rangle\right| \leq 2^{-k}$$ whenever $1\leq r\leq k$ and $1\leq m<k$. Indeed, suppose we have fixed $j_1,...,j_{k-1}$. Since $\langle P^{(r)}_{j},v\rangle\rightarrow 0$ by assumption for every $v=P^{(r)}_{j_m}$ that was already constructed, the existence of $j_k$ with the desired properties follow.

For a fixed $r$ we use the bound above whenever it is applicable and otherwise we use the $L^\infty$ bound. We get:
\begin{align*}\left\|\frac{1}{N} \sum_{k=1}^N P_{j_k}^{(r)}\right\|^2 &\leq \frac{1}{N}+\frac{2}{N^2}\sum_{1\leq a<b\leq N}\left|\langle P_{j_b}^{(r)},P_{j_a}^{(r)}\rangle\right| \\&\leq \frac{1}{N}+\frac{2}{N^2} \sum_{1\leq a<b<r} 1 + \frac{2}{N^2} \sum_{b=1}^\infty b2^{-b}\\&\longrightarrow0 \quad \text{as } N\rightarrow\infty. 
\end{align*}
\end{proof}
We return to the proof of Theorem~\ref{FK:thm}. Choose a countable set $\mathcal{D}_0\subseteq L^\infty(\XX')$ such that, for every $m\in\N$, $\mathcal{D}_0\cap\{\|f\|_\infty\leq m\}$ is $L^2$-dense in $\{f\in L^\infty(\XX'):\|f\|_\infty\leq m\}$ and write $\widetilde{\mathcal{D}_0}$ for all the elements of the form $f-\mathbb{E}(f\mid W)$ where $f\in \mathcal{D}_0$. Let $\mathcal{D}\subseteq (L^\infty(\XX'))^\ell$ denote the family of all $\ell$-tuples whose coordinates are in $\mathcal{D}_0\cup \widetilde{\mathcal{D}_0}$ and at least one of which is in $\widetilde{\mathcal{D}_0}$. Applying the previous lemma, we can find a subsequence $|n_{j_k}|\rightarrow\infty$ such that 
$$\lim_{N\rightarrow\infty} \frac{1}{N}\sum_{k=1}^N \prod_{i=1}^\ell T^{in_{j_k}}h_i = 0$$ in norm for all $(h_1,\dots,h_\ell)\in \mathcal{D}$. By density, the same is true for all $(h_1,\dots,h_\ell)\in \bigr(L^\infty(\XX')\bigr)^\ell$ with at least one being orthogonal to $W$. In other words, the sequences $(q_i(k) = i\cdot n_{j_k})_{i=1}^{\ell}$ are good for $(\ell+1)$-step reduction. Since $\XX$ itself is a nilsystem, they are also good for seminorm control and therefore good for degree $(\ell+1)$-seminorm control. Pairing with $f_0$ and using $f_{i_0}\perp W$, this implies that $\frac{1}{N}\sum_{k=1}^N a(n_{j_k})\rightarrow 0$. However, by \eqref{contradiction2}, $a(n_{j_k})\rightarrow L\neq 0$, so these averages converge to $L\neq0$, a contradiction.
We have shown that $a$ is controlled by $(\Omega,\omega_0)$, with the null set $E$, whenever $f_{i_0}\perp W$ for some $1\leq i_0\leq\ell$. To conclude the proof of Theorem~\ref{FK:thm}, let $f_0,\dots,f_\ell\in L^\infty(\mu)$ be arbitrary, and let $h_i=\mathbb E(f_i\mid W)$ and $a_\ell(n)=\int_X h_0\prod_{i=1}^{\ell}T^{in}h_i\,d\mu$. Since $\prod_{i\geq1}T^{in}h_i$ is $W$-measurable, telescoping gives
$$a-a_\ell=\sum_{i_0=1}^{\ell}a^{(i_0)},\qquad a^{(i_0)}(n)=\int_X f_0\prod_{i<i_0}T^{in}h_i\cdot T^{i_0n}(f_{i_0}-h_{i_0})\prod_{i>i_0}T^{in}f_i\,d\mu,$$
and in $a^{(i_0)}$ the $i_0$-th function is orthogonal to $W$. Hence, $a-a_\ell$ is controlled by $(\Omega,\omega_0)$. The sequence $a_\ell$ is a multiple correlation sequence on the ergodic $\ell$-step nilsystem $W$. Since $G_{\ell+1}\subseteq G^0$, the nilsystems $\XX$, $\XX'$ and $W$ have the same number $q$ of connected components, so the system constructed in Section~\ref{sec:connectedness} for $W$ is the system $\Omega$ above, and by that section $a_\ell=s+e$ with $s\in\NN_\ell$ and $e$ controlled by $(\Omega,\omega_0)$. Therefore $a=s+\bigl(e+(a-a_\ell)\bigr)$, where $s\in\NN_\ell$ and $e+(a-a_\ell)$ is controlled by $(\Omega,\omega_0)$, hence lies in $\GenNz_\ell$ by Lemma~\ref{control:lem}. Finally, $\NN_\ell+\GenNz_\ell\subseteq\GenI_\ell$ by Lemma~\ref{inclusions}(ii), since $\GenNz_\ell\subseteq\GenN_\ell$ and $\GenI_\ell$ is a linear space. This completes the proof of Theorem~\ref{FK:thm}.
\appendix
\section{The zeros of an analytic function and applications}\label{zero:section}
\begin{lemma}\label{zeros}
    Let $M$ be a compact nilmanifold with Haar measure $\mu_M$, let $(U,\phi)$ be a coordinate chart of $M$ with $U$ connected, and let $f:U\rightarrow \mathbb{C}$ be a real analytic non-zero function. Then its zero set is $\mu_M$-null. That is,
    $$\mu_M (\{u\in U : f(u)=0\})=0.$$
\end{lemma}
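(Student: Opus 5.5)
The plan is to reduce the claim to the Euclidean statement that a non-zero real analytic function on a connected open set $V\subseteq\R^m$ has a Lebesgue-null zero set, and then to transfer it to $\mu_M$ through the chart.

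\textbf{Transfer through the chart.} Write $M=L/\Delta$. By Proposition~\ref{lieisanalytic:prop}(iii) the quotient map is a local analytic diffeomorphism. The Haar measure $\mu_M$ is given locally by a smooth positive density against Lebesgue measure in any analytic chart; concretely, it is the pushforward of Haar measure on $L$, which is a smooth left-invariant volume form. Hence for the chart $(U,\phi)$, the pushforward $\phi_*(\mu_M|_U)$ is absolutely continuous with respect to Lebesgue measure on $V=\phi(U)$. This holds locally, and $U$ is a countable union of small chart pieces, so it holds on all of $V$. Set $g=f\circ\phi^{-1}$. Then $g$ is real analytic, non-zero, and defined on the connected open set $V$, and it suffices to show that $\mathrm{Leb}(\{g=0\})=0$.

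\textbf{Euclidean case.} First, $g$ cannot vanish identically on any non-empty open subset of $V$. The set of points where all derivatives of $g$ vanish is closed in $V$, since each $\partial^b g$ is continuous. It is also open, because near such a point the power series of $g$ is identically zero. Since $V$ is connected and $g\not\equiv0$, this set is empty. So at every point $v_0$ some derivative $\partial^b g(v_0)\neq0$.

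Next, cover $V$ by countably many small balls and argue by induction on $m$. For $m=1$, the zeros of a non-zero analytic function on an interval are isolated, hence countable and null. For general $m$, fix a ball $B$ around $v_0$ and choose a direction. After a linear change of coordinates, which preserves null sets, some pure derivative $\partial_{v_m}^k g(v_0)\neq0$; expanding $\partial^b g(v_0)$ in directional derivatives shows that a generic direction works. Then, for $v'$ near $v_0'$, the one-variable function $t\mapsto g(v',t)$ is not identically zero on the slice. So each slice meets the zero set in a null set, and Fubini gives $\mathrm{Leb}(\{g=0\}\cap B)=0$. A countable union over the balls completes the argument.

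\textbf{Main obstacle.} The step needing care is the slicing: one must ensure that in a neighbourhood of every point there is a coordinate direction along which $g$ is not identically zero on almost every line. Choosing the direction by the non-vanishing of a pure directional derivative at the base point, which is possible because a non-zero homogeneous polynomial, namely the degree-$k$ Taylor term, is non-zero at some unit vector, handles this cleanly. The rest is bookkeeping.
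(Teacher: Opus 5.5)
Your proof is correct. Its skeleton matches the paper's: both reduce to the Euclidean statement by observing that, in an analytic chart, $\mu_M$ is absolutely continuous with respect to Lebesgue measure. The paper gets this from Knapp's description of the invariant measure as a smooth nowhere-vanishing volume form. You get it via the local diffeomorphism $L\to L/\Delta$ and the left-invariant volume form on $L$. A small wording fix: $\mu_M$ is not literally the pushforward of Haar measure on $L$, which would be infinite. Rather, up to a constant it agrees with the image of $\mu_L$ under local inverses of the quotient map, and that is all you use.

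The real difference is the Euclidean step. The paper cites Mityagin, and because that result is for real-valued functions, it first passes to the real or imaginary part of $f\circ\phi^{-1}$. You prove the fact directly:
\begin{itemize}
\item the open-closed argument gives, at each $v_0$, a non-vanishing derivative of some order $k$;
\item the non-zero degree-$k$ Taylor form gives a direction $w$ with $D_w^k g(v_0)=k!\,P_k(w)\neq 0$;
\item by continuity this pure derivative is non-zero on a small box, so every segment of the box in direction $w$ meets $\{g=0\}$ in a countable set (isolated zeros);
\item Tonelli, which applies since the zero set is relatively closed, together with a countable cover of $V$, finishes the argument.
\end{itemize}
This handles complex-valued $g$ without splitting into real and imaginary parts, and it makes the appendix self-contained. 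The only cost is length.

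One remark on presentation: your argument never uses an induction hypothesis in dimension $m-1$. Only the one-variable case enters, applied on the line slices, so the phrase ``argue by induction on $m$'' can simply be dropped.
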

\begin{proof}
The function $f\circ\phi^{-1}$ is real analytic on the connected open set $\phi(U)\subseteq\R^{\dim M}$ and is not identically zero, so the same holds for its real part or its imaginary part. By \cite{Mityagin}, the zero set of $f\circ\phi^{-1}$ is therefore Lebesgue null. It remains to show that the Haar measure is absolutely continuous with respect to Lebesgue measure in local coordinates. This follows from \cite[Section~8]{Knapp}: by \cite[Theorems~8.21 and~8.23]{Knapp}, the Haar measure on $M$ is given by a nowhere-vanishing smooth volume form, and the coordinate integration formula (see \cite[Equation~(8.5a)]{Knapp}) gives
$$\mu_M(A) = \int_{\phi(A)}\rho(x)\, dx$$
for all Borel $A\subseteq U$, where $dx$ is the Lebesgue measure and $\rho$ is smooth and strictly positive (here $\rho$ is $F_\alpha$ as defined in \cite[Equation~(8.4)]{Knapp}). Hence the zero set of $f$ is $\mu_M$-null.
\end{proof}

\bibliographystyle{abbrv}
\bibliography{bibliography}

@book{KrantzParks,
  author    = {Krantz, Steven G. and Parks, Harold R.},
  title     = {A primer of real analytic functions},
  edition   = {Second},
  series    = {Birkh\"{a}user Advanced Texts: Basler Lehrb\"{u}cher},
  publisher = {Birkh\"{a}user Boston, Inc., Boston, MA},
  year      = {2002},
  pages     = {xiv+205},
  isbn      = {0-8176-4264-1},
}

@book{Hatcher,
  author    = {Hatcher, Allen},
  title     = {Algebraic topology},
  publisher = {Cambridge University Press, Cambridge},
  year      = {2002},
  pages     = {xii+544},
  isbn      = {0-521-79160-X; 0-521-79540-0},
}

@article {leibmanpolyseq,
    AUTHOR = {Leibman, A.},
     TITLE = {Polynomial sequences in groups},
   JOURNAL = {J. Algebra},
  FJOURNAL = {Journal of Algebra},
    VOLUME = {201},
      YEAR = {1998},
    NUMBER = {1},
     PAGES = {189--206},
      ISSN = {0021-8693,1090-266X},
   MRCLASS = {20F14},
  MRNUMBER = {1608723},
MRREVIEWER = {Mohammad\ Reza R. Moghaddam},
       DOI = {10.1006/jabr.1997.7269},
       URL = {https://doi.org/10.1006/jabr.1997.7269},
}

@book {Knapp,
    AUTHOR = {Knapp, Anthony W.},
     TITLE = {Lie groups beyond an introduction},
    SERIES = {Progress in Mathematics},
    VOLUME = {140},
   EDITION = {Second},
 PUBLISHER = {Birkh\"auser Boston, Inc., Boston, MA},
      YEAR = {2002},
     PAGES = {xviii+812},
      ISBN = {0-8176-4259-5},
   MRCLASS = {22-01},
  MRNUMBER = {1920389},
}

@article {Mityagin,
    AUTHOR = {Mityagin, B. S.},
     TITLE = {The zero set of a real analytic function},
   JOURNAL = {Mat. Zametki},
  FJOURNAL = {Matematicheskie Zametki},
    VOLUME = {107},
      YEAR = {2020},
    NUMBER = {3},
     PAGES = {473--475},
      ISSN = {0025-567X,2305-2880},
   MRCLASS = {26E05 (26B10)},
  MRNUMBER = {4070868},
MRREVIEWER = {Anna\ Valette-Stasica},
       DOI = {10.4213/mzm12620},
       URL = {https://doi.org/10.4213/mzm12620},
}

@book{Hel78,
  author    = {Helgason, Sigurdur},
  title     = {Differential Geometry, {L}ie Groups, and Symmetric Spaces},
  series    = {Pure and Applied Mathematics},
  volume    = {80},
  publisher = {Academic Press},
  address   = {New York--London},
  year      = {1978}
}

@article {LeibmanOrbit,
    AUTHOR = {Leibman, A.},
     TITLE = {Orbit of the diagonal in the power of a nilmanifold},
   JOURNAL = {Trans. Amer. Math. Soc.},
  FJOURNAL = {Transactions of the American Mathematical Society},
    VOLUME = {362},
      YEAR = {2010},
    NUMBER = {3},
     PAGES = {1619--1658},
      ISSN = {0002-9947,1088-6850},
   MRCLASS = {37A17 (22F30 37A05 37A30)},
  MRNUMBER = {2563743},
MRREVIEWER = {Nikos\ Frantzikinakis},
       DOI = {10.1090/S0002-9947-09-04961-7},
       URL = {https://doi.org/10.1090/S0002-9947-09-04961-7},
}

@article {nilpotentIPszemeredi,
    AUTHOR = {Zorin-Kranich, Pavel},
     TITLE = {A nilpotent {IP} polynomial multiple recurrence theorem},
   JOURNAL = {J. Anal. Math.},
  FJOURNAL = {Journal d'Analyse Math\'ematique},
    VOLUME = {123},
      YEAR = {2014},
     PAGES = {183--225},
      ISSN = {0021-7670,1565-8538},
   MRCLASS = {37A45 (11B75 28Dxx)},
  MRNUMBER = {3233579},
       DOI = {10.1007/s11854-014-0018-5},
       URL = {https://doi.org/10.1007/s11854-014-0018-5},
}

@article{DHJ3,
    AUTHOR = {Furstenberg, H. and Katznelson, Y.},
     TITLE = {A density version of the {H}ales-{J}ewett theorem for {$k=3$}},
      NOTE = {Graph theory and combinatorics (Cambridge, 1988)},
   JOURNAL = {Discrete Math.},
  FJOURNAL = {Discrete Mathematics},
    VOLUME = {75},
      YEAR = {1989},
    NUMBER = {1-3},
     PAGES = {227--241},
      ISSN = {0012-365X,1872-681X},
   MRCLASS = {05A05 (11B83 11K99 28D05)},
  MRNUMBER = {1001397},
MRREVIEWER = {R.\ L.\ Graham},
       DOI = {10.1016/0012-365X(89)90089-7},
       URL = {https://doi.org/10.1016/0012-365X(89)90089-7},
}

@article{DHJ,
    AUTHOR = {Furstenberg, H. and Katznelson, Y.},
     TITLE = {A density version of the {H}ales-{J}ewett theorem},
   JOURNAL = {J. Anal. Math.},
  FJOURNAL = {Journal d'Analyse Math\'ematique},
    VOLUME = {57},
      YEAR = {1991},
     PAGES = {64--119},
      ISSN = {0021-7670,1565-8538},
   MRCLASS = {28D05 (11B25)},
  MRNUMBER = {1191743},
MRREVIEWER = {B.\ Volkmann},
       DOI = {10.1007/BF03041066},
       URL = {https://doi.org/10.1007/BF03041066},
}

@article {Franpoly,
    AUTHOR = {Frantzikinakis, Nikos},
     TITLE = {Multiple ergodic averages for three polynomials and
              applications},
   JOURNAL = {Trans. Amer. Math. Soc.},
  FJOURNAL = {Transactions of the American Mathematical Society},
    VOLUME = {360},
      YEAR = {2008},
    NUMBER = {10},
     PAGES = {5435--5475},
      ISSN = {0002-9947,1088-6850},
   MRCLASS = {37A05 (11B75 28D05 37A45)},
       DOI = {10.1090/S0002-9947-08-04591-1},
       URL = {https://doi.org/10.1090/S0002-9947-08-04591-1},
}

@article{zieglerformula,
AUTHOR={Ziegler, Tamar},
Title = {A non-conventional ergodic theorem for a nilsystem.},
Journal = {Ergodic Theory and Dynamical Systems},
Volume = {25 no. 4},
Year = {2005},
Pages = {1357-1370.},
}

@article{BergelsonIPSzemeredi,

AUTHOR = {Bergelson, Vitaly and McCutcheon, Randall},
TITLE = {An Ergodic {IP} Polynomial {S}zemerédi Theorem},
Journal = {Memoirs of the AMS},
Volume = {146},
YEAR = {2000},
}

@misc{ShalomGrothendieck,
AUTHOR = {Or Shalom},
TITLE = {An application of Grothendieck theorem to the theory of multicorrelation sequences, multiple recurrence and partition regularity of quadratic equations},
Journal = {Preprint, available at https://arxiv.org/abs/2302.12857},
Year={2023},

}

@article {Stepin,
    AUTHOR = {Stepin, A. M.},
     TITLE = {Flows on solvable manifolds},
   JOURNAL = {Uspehi Mat. Nauk},
  FJOURNAL = {Akademiya Nauk SSSR i Moskovskoe Matematicheskoe Obshchestvo.
              Uspekhi Matematicheskikh Nauk},
    VOLUME = {24},
      YEAR = {1969},
    NUMBER = {5 (149)},
     PAGES = {241--242},
      ISSN = {0042-1316},
   MRCLASS = {28.70 (22.00)},
  MRNUMBER = {0267075},
MRREVIEWER = {J. Merza},
}

@article {HKM,
    AUTHOR = {Host, Bernard and Kra, Bryna and Maass, Alejandro},
     TITLE = {Complexity of nilsystems and systems lacking nilfactors},
   JOURNAL = {J. Anal. Math.},
  FJOURNAL = {Journal d'Analyse Math\'{e}matique},
    VOLUME = {124},
      YEAR = {2014},
     PAGES = {261--295},
      ISSN = {0021-7670},
   MRCLASS = {37A30 (37B05)},
  MRNUMBER = {3286054},
MRREVIEWER = {Siming Tu},
       DOI = {10.1007/s11854-014-0032-7},
       URL = {https://doi.org/10.1007/s11854-014-0032-7},
}

@article{ARS,
  Author        = {Ackelsberg, Ethan and Richter, Florian K. and Shalom, Or},
  Title         = {On the maximal spectral type of nilsystems},
  Journal       = {Proc. Amer. Math. Soc. Ser. B},
  Fjournal      = {Proceedings of the American Mathematical Society, Series B},
  Volume        = {11},
  Year          = {2024},
  Pages         = {469--480},
}

@article {ABS,
    AUTHOR = {Ackelsberg, Ethan and Bergelson, Vitaly and Shalom, Or},
     TITLE = {Khintchine-type recurrence for 3-point configurations},
   JOURNAL = {Forum Math. Sigma},
  FJOURNAL = {Forum of Mathematics. Sigma},
    VOLUME = {10},
      YEAR = {2022},
     PAGES = {Paper No. e107, 57},
      ISSN = {2050-5094},
   MRCLASS = {37A15 (05D10 37A30)},
  MRNUMBER = {4519061},
       DOI = {10.1017/fms.2022.97},
       URL = {https://doi.org/10.1017/fms.2022.97},
}

@article {morejoint,
    AUTHOR = {Donoso, Sebasti\'an and Ferr\'e{} Moragues, Andreu and
              Koutsogiannis, Andreas and Sun, Wenbo},
     TITLE = {Decomposition of multicorrelation sequences and joint
              ergodicity},
   JOURNAL = {Ergodic Theory Dynam. Systems},
  FJOURNAL = {Ergodic Theory and Dynamical Systems},
    VOLUME = {44},
      YEAR = {2024},
    NUMBER = {2},
     PAGES = {432--480},
      ISSN = {0143-3857,1469-4417},
   MRCLASS = {37A05 (28A99 37A30 60F99)},
  MRNUMBER = {4686270},
MRREVIEWER = {Anh\ N.\ Le},
       DOI = {10.1017/etds.2023.30},
       URL = {https://doi.org/10.1017/etds.2023.30},
}

@article {integerpart,
    AUTHOR = {Koutsogiannis, Andreas},
     TITLE = {Integer part polynomial correlation sequences},
   JOURNAL = {Ergodic Theory Dynam. Systems},
  FJOURNAL = {Ergodic Theory and Dynamical Systems},
    VOLUME = {38},
      YEAR = {2018},
    NUMBER = {4},
     PAGES = {1525--1542},
      ISSN = {0143-3857,1469-4417},
   MRCLASS = {37A30 (11B30 37A05 37A45 40A05 81P40 94A55)},
  MRNUMBER = {3789175},
MRREVIEWER = {Bryna\ Kra},
       DOI = {10.1017/etds.2016.67},
       URL = {https://doi.org/10.1017/etds.2016.67},
}

@article {MCS,
    AUTHOR = {Frantzikinakis, Nikos},
     TITLE = {Multiple correlation sequences and nilsequences},
   JOURNAL = {Invent. Math.},
  FJOURNAL = {Inventiones Mathematicae},
    VOLUME = {202},
      YEAR = {2015},
    NUMBER = {2},
     PAGES = {875--892},
      ISSN = {0020-9910,1432-1297},
   MRCLASS = {37A30 (05D10 11B30 37A05)},
  MRNUMBER = {3418246},
MRREVIEWER = {Vladimir\ S.\ Anashin},
       DOI = {10.1007/s00222-015-0579-7},
       URL = {https://doi.org/10.1007/s00222-015-0579-7},
}

@article {subsequences,
    AUTHOR = {Le, Anh Ngoc},
     TITLE = {Nilsequences and multiple correlations along subsequences},
   JOURNAL = {Ergodic Theory Dynam. Systems},
  FJOURNAL = {Ergodic Theory and Dynamical Systems},
    VOLUME = {40},
      YEAR = {2020},
    NUMBER = {6},
     PAGES = {1634--1654},
      ISSN = {0143-3857,1469-4417},
   MRCLASS = {37A44 (11N37 22E25 37A25 37A30)},
  MRNUMBER = {4092860},
MRREVIEWER = {Ryo\ Moore},
       DOI = {10.1017/etds.2018.110},
       URL = {https://doi.org/10.1017/etds.2018.110},
}

@article {decomposition,
    AUTHOR = {Le, Anh N. and Moreira, Joel and Richter, Florian K.},
     TITLE = {A decomposition of multicorrelation sequences for commuting
              transformations along primes},
   JOURNAL = {Discrete Anal.},
  FJOURNAL = {Discrete Analysis},
      YEAR = {2021},
     PAGES = {Paper No. 4, 27},
      ISSN = {2397-3129},
   MRCLASS = {37A44 (11B30 37A25)},
  MRNUMBER = {4274457},
MRREVIEWER = {Bryna\ Kra},
       DOI = {10.19086/da},
       URL = {https://doi.org/10.19086/da},
}

@article {FranKucajoint,
    AUTHOR = {Frantzikinakis, Nikos and Kuca, Borys},
     TITLE = {Joint ergodicity for commuting transformations and
              applications to polynomial sequences},
   JOURNAL = {Invent. Math.},
  FJOURNAL = {Inventiones Mathematicae},
    VOLUME = {239},
      YEAR = {2025},
    NUMBER = {2},
     PAGES = {621--706},
      ISSN = {0020-9910,1432-1297},
   MRCLASS = {37A44 (05D10 11B30 28D05)},
  MRNUMBER = {4850605},
MRREVIEWER = {Ryo\ Moore},
       DOI = {10.1007/s00222-024-01313-w},
       URL = {https://doi.org/10.1007/s00222-024-01313-w},
}

@article {FranKuca,
    AUTHOR = {Frantzikinakis, Nikos and Kuca, Borys},
     TITLE = {Degree lowering for ergodic averages along arithmetic
              progressions},
   JOURNAL = {J. Anal. Math.},
  FJOURNAL = {Journal d'Analyse Math\'ematique},
    VOLUME = {154},
      YEAR = {2024},
    NUMBER = {1},
     PAGES = {199--253},
      ISSN = {0021-7670,1565-8538},
   MRCLASS = {37A30},
  MRNUMBER = {4846312},
MRREVIEWER = {Song\ Shao},
       DOI = {10.1007/s11854-024-0347-y},
       URL = {https://doi.org/10.1007/s11854-024-0347-y},
}

@article {Fran,
    AUTHOR = {Frantzikinakis, Nikos},
     TITLE = {Some open problems on multiple ergodic averages},
   JOURNAL = {Bull. Hellenic Math. Soc.},
  FJOURNAL = {Bulletin of the Hellenic Mathematical Society},
    VOLUME = {60},
      YEAR = {2016},
     PAGES = {41--90},
   MRCLASS = {37A30 (05D10 11B30 37A45)},
  MRNUMBER = {3613710},
MRREVIEWER = {El Houcein El Abdalaoui},
}

@article {MC3,
    AUTHOR = {Ferr\'{e} Moragues, Andreu},
     TITLE = {Properties of multicorrelation sequences and large returns
              under some ergodicity assumptions},
   JOURNAL = {Discrete Contin. Dyn. Syst.},
  FJOURNAL = {Discrete and Continuous Dynamical Systems. Series A},
    VOLUME = {41},
      YEAR = {2021},
    NUMBER = {6},
     PAGES = {2809--2828},
      ISSN = {1078-0947},
   MRCLASS = {37A30 (37A15)},
  MRNUMBER = {4235630},
MRREVIEWER = {Donald Robertson},
       DOI = {10.3934/dcds.2020386},
       URL = {https://doi.org/10.3934/dcds.2020386},
}

@article {MC4,
    AUTHOR = {Koutsogiannis, Andreas and Le, Anh N. and Moreira, Joel and
              Richter, Florian K.},
     TITLE = {Structure of multicorrelation sequences with integer part
              polynomial iterates along primes},
   JOURNAL = {Proc. Amer. Math. Soc.},
  FJOURNAL = {Proceedings of the American Mathematical Society},
    VOLUME = {149},
      YEAR = {2021},
    NUMBER = {1},
     PAGES = {209--216},
      ISSN = {0002-9939},
   MRCLASS = {37A44 (11B30 37A15)},
  MRNUMBER = {4172598},
MRREVIEWER = {Joseph Andrew Vandehey},
       DOI = {10.1090/proc/15185},
       URL = {https://doi.org/10.1090/proc/15185},
}

@article {BrietGreen,
    AUTHOR = {Bri\"{e}t, Jop and Green, Ben},
     TITLE = {Multiple correlation sequences not approximable by
              nilsequences},
   JOURNAL = {Ergodic Theory Dynam. Systems},
  FJOURNAL = {Ergodic Theory and Dynamical Systems},
    VOLUME = {42},
      YEAR = {2022},
    NUMBER = {9},
     PAGES = {2711--2722},
      ISSN = {0143-3857},
   MRCLASS = {11B30 (11B25)},
  MRNUMBER = {4461688},
       DOI = {10.1017/etds.2021.66},
       URL = {https://doi.org/10.1017/etds.2021.66},
}

@article {BHK,
    AUTHOR = {Bergelson, Vitaly and Host, Bernard and Kra, Bryna},
     TITLE = {Multiple recurrence and nilsequences},
      NOTE = {With an appendix by Imre Ruzsa},
   JOURNAL = {Invent. Math.},
  FJOURNAL = {Inventiones Mathematicae},
    VOLUME = {160},
      YEAR = {2005},
    NUMBER = {2},
     PAGES = {261--303},
      ISSN = {0020-9910},
   MRCLASS = {37A30 (05D10 28D05 37A05)},
  MRNUMBER = {2138068},
MRREVIEWER = {Randall McCutcheon},
       DOI = {10.1007/s00222-004-0428-6},
       URL = {https://doi.org/10.1007/s00222-004-0428-6},
}

@article {Leibman2,
    AUTHOR = {Leibman, A.},
     TITLE = {Nilsequences, null-sequences, and multiple correlation
              sequences},
   JOURNAL = {Ergodic Theory Dynam. Systems},
  FJOURNAL = {Ergodic Theory and Dynamical Systems},
    VOLUME = {35},
      YEAR = {2015},
    NUMBER = {1},
     PAGES = {176--191},
      ISSN = {0143-3857},
   MRCLASS = {37A30 (28D05)},
  MRNUMBER = {3294297},
MRREVIEWER = {El Houcein El Abdalaoui},
       DOI = {10.1017/etds.2013.36},
       URL = {https://doi.org/10.1017/etds.2013.36},
}

@article {Leibman1,
    AUTHOR = {Leibman, A.},
     TITLE = {Multiple polynomial correlation sequences and nilsequences},
   JOURNAL = {Ergodic Theory Dynam. Systems},
  FJOURNAL = {Ergodic Theory and Dynamical Systems},
    VOLUME = {30},
      YEAR = {2010},
    NUMBER = {3},
     PAGES = {841--854},
      ISSN = {0143-3857},
   MRCLASS = {37A30 (28D05 37A05 37A45)},
  MRNUMBER = {2643713},
MRREVIEWER = {Bryna Kra},
       DOI = {10.1017/S0143385709000303},
       URL = {https://doi.org/10.1017/S0143385709000303},
}

@article{shalom2,
AUTHOR = {Shalom, Or}, 
TITLE= {Multiple ergodic averages in abelian groups and Khintchine type recurrence},
Journal = {Trans. Amer. Math. Soc. 375 (2022), 2729-2761}, 
Year = 2021
}

@book {hk-book,
    AUTHOR = {Host, Bernard and Kra, Bryna},
     TITLE = {Nilpotent structures in ergodic theory},
    SERIES = {Mathematical Surveys and Monographs},
    VOLUME = {236},
 PUBLISHER = {American Mathematical Society, Providence, RI},
      YEAR = {2018},
     PAGES = {X+427},
      ISBN = {978-1-4704-4780-9},
   MRCLASS = {37-02 (11B30 28D05 37Axx 37B05 47A35)},
  MRNUMBER = {3839640},
MRREVIEWER = {Yonatan Gutman},
       DOI = {10.1090/surv/236},
       URL = {https://doi.org/10.1090/surv/236},
}

@Article{austin2016nonconventional,
  Title                    = {{Non-conventional ergodic averages for several commuting actions of an amenable group}},
  Author                   = {T.~Austin},
  Journal                  = {J. Anal. Math.},
  Year                     = {2016},
  Pages                    = {243-274},
  Volume                   = {130}
}

@Article{bergelson1996polynomial,
  Title                    = {{Polynomial extensions of van der Waerden's and Szemer{\'e}di's theorems}},
  Author                   = {V.~Bergelson and A.~Leibman},
  Journal                  = {J.~Amer.~Math.~Soc.},
  Year                     = {1996},
  Number                   = {3},
  Pages                    = {725--753},
  Volume                   = {9}
}

@Article{furstenberg1977ergodic,
  Title                    = {{Ergodic behaviour of diagonal measures and a theorem of Szemer{\'e}di on arithmetic progressions}},
  Author                   = {H.~Furstenberg},
  Journal                  = {J. Anal. Math.},
  Year                     = {1977},
  Pages                    = {204-256},
  Volume                   = {31}
}

@Article{furstenberg1985ergodic,
  Title                    = {{An ergodic Szemer{\'e}di theorem for IP-systems and combinatorial theory}},
  Author                   = {H.~Furstenberg and Y.~Katznelson},
  Journal                  = {J. Anal. Math.},
  Year                     = {1985},
  Pages                    = {117--168}
}

@Article{furstenberg1978ergodic,
  Title                    = {{An ergodic Szemer{\'e}di theorem for commuting transformations}},
  Author                   = {H.~Furstenberg and Y.~Katznelson},
  Journal                  = {J. Anal. Math.},
  Year                     = {1978},
  Pages                    = {275--291},
  Volume                   = {34}
}

@Article{host2005nonconventional,
  Title                    = {{Nonconventional ergodic averages and nilmanifolds}},
  Author                   = {B.~Host and B.~Kra},
  Journal                  = {Ann.~Math.},
  Year                     = {2005},
  Number                   = {1},
  Pages                    = {397-488},
  Volume                   = {161}
}

@article {gt1,
    AUTHOR = {Green, Ben and Tao, Terence},
     TITLE = {The quantitative behaviour of polynomial orbits on
              nilmanifolds},
   JOURNAL = {Ann. of Math. (2)},
  FJOURNAL = {Annals of Mathematics. Second Series},
    VOLUME = {175},
      YEAR = {2012},
    NUMBER = {2},
     PAGES = {465--540},
      ISSN = {0003-486X},
   MRCLASS = {37A15},
  MRNUMBER = {2877065},
MRREVIEWER = {Tamar Ziegler},
       DOI = {10.4007/annals.2012.175.2.2},
       URL = {https://doi.org/10.4007/annals.2012.175.2.2},
}

@Article{robertson2016characteristic,
  Title                    = {{Characteristic factors for commuting actions of amenable groups}},
  Author                   = {D.~Robertson},
  Journal                  = {J. Anal. Math.},
  Year                     = {2016},
  Pages                    = {165-196},
  Volume                   = {129}
}

@Article{tao2008norm,
  Title                    = {{Norm convergence of multiple ergodic averages for commuting transformations}},
  Author                   = {Terence Tao},
  Journal                  = {Ergodic Theory Dynam. Systems},
  Year                     = {2008},
  Pages                    = {657-688},
  Volume                   = {28}
}

@Article{ziegler2007universal,
  Title                    = {{Universal characteristic factors and Furstenberg averages}},
  Author                   = {T.~Ziegler},
  Journal                  = {J.~Amer.~Math.~Soc.},
  Year                     = {2007},
  Pages                    = {53-97},
  Volume                   = {20}
}

@article {ABB,
    AUTHOR = {Ackelsberg, Ethan and Bergelson, Vitaly and Best, Andrew},
     TITLE = {Multiple recurrence and large intersections for abelian group
              actions},
   JOURNAL = {Discrete Anal.},
  FJOURNAL = {Discrete Analysis},
      YEAR = {2021},
     PAGES = {Paper No. 18, 91},
      ISSN = {2397-3129},
       DOI = {10.19086/da},
       URL = {https://doi.org/10.19086/da},
}

@incollection{leibman-hkz,
  author    = {Bergelson, V.},
  title     = {Combinatorial and {D}iophantine applications of ergodic theory},
  note      = {With appendices by A. Leibman and by A. Quas and M. Wierdl},
  booktitle = {Handbook of Dynamical Systems. {V}ol. 1{B}},
  editor    = {Hasselblatt, B. and Katok, A.},
  publisher = {Elsevier B. V., Amsterdam},
  year      = {2006},
  pages     = {745--869}
}

@article {Leibmanpointwise,
    AUTHOR = {Leibman, A.},
     TITLE = {Pointwise convergence of ergodic averages for polynomial
              sequences of translations on a nilmanifold},
   JOURNAL = {Ergodic Theory Dynam. Systems},
  FJOURNAL = {Ergodic Theory and Dynamical Systems},
    VOLUME = {25},
      YEAR = {2005},
    NUMBER = {1},
     PAGES = {201--213},
      ISSN = {0143-3857,1469-4417},
   MRCLASS = {37A17 (22F30 28D15)},
  MRNUMBER = {2122919},
MRREVIEWER = {Alexander\ Gorodnik},
       DOI = {10.1017/S0143385704000215},
       URL = {https://doi.org/10.1017/S0143385704000215},
}

@article {FrK052,
    AUTHOR = {Frantzikinakis, Nikos and Kra, Bryna},
     TITLE = {Ergodic averages for independent polynomials and applications},
   JOURNAL = {J. London Math. Soc. (2)},
  FJOURNAL = {Journal of the London Mathematical Society. Second Series},
    VOLUME = {74},
      YEAR = {2006},
    NUMBER = {1},
     PAGES = {131--142},
      ISSN = {0024-6107,1469-7750},
   MRCLASS = {37A45 (28D05 37A30)},
  MRNUMBER = {2254556},
MRREVIEWER = {Thomas\ Ward},
       DOI = {10.1112/S0024610706023374},
       URL = {https://doi.org/10.1112/S0024610706023374},
}

@misc{krashalom2025,
      title={Ergodic averages and the large intersection property along IP sets}, 
      author={Bryna Kra and Or Shalom},
      year={2025},
      eprint={2506.17771},
      archivePrefix={arXiv},
      primaryClass={math.DS},
      url={https://arxiv.org/abs/2506.17771}, 
}

@article {FrK05,
    AUTHOR = {Frantzikinakis, Nikos and Kra, Bryna},
     TITLE = {Polynomial averages converge to the product of integrals},
      NOTE = {Probability in mathematics},
   JOURNAL = {Israel J. Math.},
  FJOURNAL = {Israel Journal of Mathematics},
    VOLUME = {148},
      YEAR = {2005},
     PAGES = {267--276},
      ISSN = {0021-2172,1565-8511},
   MRCLASS = {37A25 (22E25 28D05 37A15 47A35)},
  MRNUMBER = {2191231},
MRREVIEWER = {Anish\ Ghosh},
       DOI = {10.1007/BF02775439},
       URL = {https://doi.org/10.1007/BF02775439},
}

@article {Fran-Hardy,
    AUTHOR = {Frantzikinakis, Nikos},
     TITLE = {Multiple recurrence and convergence for {H}ardy sequences of
              polynomial growth},
   JOURNAL = {J. Anal. Math.},
  FJOURNAL = {Journal d'Analyse Math\'ematique},
    VOLUME = {112},
      YEAR = {2010},
     PAGES = {79--135},
      ISSN = {0021-7670,1565-8538},
   MRCLASS = {47A35 (11C08 28D05 37A05)},
  MRNUMBER = {2762998},
MRREVIEWER = {Anthony\ Quas},
       DOI = {10.1007/s11854-010-0026-z},
       URL = {https://doi.org/10.1007/s11854-010-0026-z},
}

@misc{FKshalom,
  author = {Shalom, Or},
  title  = {Non-vanishing of multiple correlation sequences},
  year   = {2026},
  note   = {Preprint, \href{https://arxiv.org/abs/2607.13286}{arXiv:2607.13286}}
}

@article {Leibman3,
    AUTHOR = {Leibman, A.},
     TITLE = {Convergence of multiple ergodic averages along polynomials of
              several variables},
   JOURNAL = {Israel J. Math.},
  FJOURNAL = {Israel Journal of Mathematics},
    VOLUME = {146},
      YEAR = {2005},
     PAGES = {303--315},
      ISSN = {0021-2172,1565-8511},
   MRCLASS = {28D05 (47A35)},
  MRNUMBER = {2151605},
MRREVIEWER = {U.\ Krengel},
       DOI = {10.1007/BF02773538},
       URL = {https://doi.org/10.1007/BF02773538},
}
\end{document}